\DeclareSymbolFont{cyrletters}{OT2}{wncyr}{m}{n}
\DeclareMathSymbol{\Sha}{\mathalpha}{cyrletters}{"58}
\definecolor{refkey}{rgb}{1,1,1}
\definecolor{labelkey}{rgb}{1,1,1}
\definecolor{cite}{rgb}{0.9451,0.2706,0.4941}
\definecolor{ruri}{rgb}{0.0078,0.4022,0.8010}
\newcommand{\mcC}{\mathcal{C}}
\newcommand{\mcF}{\mathcal{F}}
\newcommand{\mcI}{\mathcal{I}}
\newcommand{\mcM}{\mathcal{M}}
\newcommand{\mbF}{\mathbb{F}}
\newcommand{\mbN}{\mathbb{N}}
\newcommand{\mbP}{\mathbb{P}}
\newcommand{\mbQ}{\mathbb{Q}}
\newcommand{\mbR}{\mathbb{R}}
\newcommand{\mbZ}{\mathbb{Z}}
\newcommand{\mfP}{\mathfrak{P}}
\newcommand{\msC}{\mathscr{C}}
\newcommand{\msV}{\mathscr{V}}
\newtheorem{weakisomGC}{Weak Isom-version Conjecture}
\newtheorem{weakhomGC}{Weak Hom-version Conjecture}
\newtheorem{pcc}{Pointed Collection Conjecture}
\newcommand{\be}{\begin{enumerate}}
\newcommand{\ee}{\end{enumerate}}
\newcommand{\bcd}{\[\begin{CD}}
\newcommand{\ecd}{\end{CD}\]}
\newcommand{\bit}{\begin{itemize}}
\newcommand{\eit}{\end{itemize}}
\newcommand{\bq}{\begin{quote}}
\newcommand{\eq}{\end{quote}}
\newcommand{\bpf}{\begin{proof}}
\newcommand{\epf}{\end{proof}}
\newcommand{\spec}{\textrm{Spec}}
\theoremstyle{plain}
\newtheorem{theorem}{Theorem}[section]
\newtheorem{proposition/example}[theorem]{Proposition/Example}
\newtheorem{proposition}[theorem]{Proposition}
\newtheorem{lemma}[theorem]{Lemma}
\theoremstyle{definition}
\newtheorem{definition}[theorem]{Definition}
\newtheorem{remark}[theorem]{Remark}
\newtheorem{conjecture/question}[theorem]{Conjecture/Question}
\newtheorem{remark/definition}[theorem]{Remark/Definition}
\newtheorem{definition/notation}[theorem]{Definition/Notation}
\numberwithin{equation}{section}
\theoremstyle{remark}
\numberwithin{equation}{section}
\begin{document}
\title{\textbf{Topological structures of moduli spaces of curves and anabelian geometry in positive characteristic}}

\author{Zhi Hu}

\address{  \textsc{School of Mathematics, Nanjing University of Science and Technology, Nanjing 210094, China}}

\email{halfask@mail.ustc.edu.cn}

\author{Yu Yang}

\address{ \textsc{ Research Institute for Mathematical Sciences, Kyoto University, Kyoto 606-8502, Japan}}

\email{yuyang@kurims.kyoto-u.ac.jp}
\author{Runhong Zong}

\address{ \textsc{Department of Mathematics, Nanjing University, Nanjing  210093, China}}
\email{rzong@nju.edu.cn}

\date{}
\begin{abstract}
In the present paper, we study a new kind of anabelian phenomenon concerning the smooth pointed stable curves in positive characteristic. It shows that the topological structures of moduli spaces of curves can be understood from the viewpoint of anabelian geometry. We formulate some new anabelian-geometric conjectures relating the  tame fundamental groups of curves over algebraically closed fields of characteristic $p>0$ to the  moduli spaces of curves. These conjectures are generalized versions of the weak Isom-version of the Grothendieck conjecture for curves over algebraically closed fields of characteristic $p>0$ which was formulated by Tamagawa. Moreover, we prove that the conjectures hold for certain points lying in  the moduli space of curves of genus $0$.
\end{abstract}

\maketitle

\tableofcontents

\section{Introduction}

\subsection{The mystery of fundamental groups in positive characteristic}

\subsubsection{}\label{sec111} Let $k$ be an algebraically closed field of characteristic $p\geq 0$, and let $(X, D_{X})$ be a {\it smooth} pointed stable curve of type $(g_{X}, n_{X})$ over $k$ (i.e. $2g_{X}+n_{X}-2>0$, see \cite[Definition 1.1 (iv)]{K}), where $X$ denotes the underlying curve, $D_{X}$ denotes the (ordered) finite set of marked points, $g_{X}$ denotes the genus of $X$, and $n_{X}$ denotes the cardinality $\#(D_{X})$ of $D_{X}$. We put $U_{X}:= X \setminus D_{X}$. By choosing a base point of $U_{X}$, we have the tame fundamental group $\pi_{1}^{\rm t}(U_{X})$ of $U_{X}$.

If $p=0$, it is well-known that $\pi_{1}^{\rm t}(U_{X})$ is isomorphic to the profinite completion of the topological fundamental group of a Riemann surface of type $(g_{X}, n_{X})$. Hence, almost no geometric information about $U_{X}$ can be carried out from $\pi_{1}^{\rm t}(U_{X})$. By contrast, if  $p>0$, the situation is quite different from that in characteristic $0$. The tame fundamental group $\pi_{1}^{\rm t}(U_{X})$ is very mysterious and its structure is no longer known, in particular, there exist {\it anabelian phenomena for curves over algebraically closed fields of characteristic $p>0$}.

\subsubsection{} Firstly, let us explain some general background about anabelian geometry. In the 1980s, A. Grothendieck suggested
a theory of arithmetic geometry called anabelian geometry (\cite{G}). The central question of the theory is as follows: Can we reconstruct the geometric information of a
variety group-theoretically from various versions of its algebraic fundamental group?  The original anabelian geometry suggested by Grothendieck focused on varieties over {\it arithmetic} fields, in particular, the fields finitely generated over $\mbQ$. In the case of curves in characteristic $0$, anabelian geometry has been deeply studied (e.g. \cite{N}, \cite{T1}) and, in particular, the most important case (i.e., the fields finitely generated over $\mbQ$, or more general, sub-$p$-adic fields) has been established completely(\cite{M}).  Note that the actions of the Galois groups of the base fields on the geometric fundamental groups play a crucial role for recovering geometric information of curves over arithmetic fields.

Next, we return to the case where $k$ is an algebraically closed field of characteristic $p>0$.   In \cite{T2}, A. Tamagawa discovered that there also exist {\it anabelian phenomena} for curves over {\it algebraically closed fields of characteristic $p$}. This came rather surprisingly
since it means that, in positive characteristic, the geometry of curves can be determined
by their geometric fundamental groups {\it without Galois actions}. Since the late 1990s, this kind of anabelian phenomenon has been studied further by M. Raynaud (\cite{R2}), F. Pop-M. Sa\"idi (\cite{PS}), Tamagawa (\cite{T2}, \cite{T4}, \cite{T5}), and the second author of the present paper (\cite{Y1}, \cite{Y2}, \cite{Y4}). More precisely, they focused on the so-called {\it weak Isom-version of Grothendieck's anabelian conjecture for curves over algebraically closed fields of characteristic $p>0$} (or the ``weak Isom-version conjecture'' for short) formulated by Tamagawa (\cite[Conjecture 2.2]{T3}), which says that curves are isomorphic if and only if their tame (or \'etale) fundamental groups are isomorphic. At the present, this conjecture is still wide-open.

\subsection{Topology structures of moduli spaces of curves and anabelian geometry}  In the present paper, we study a new kind of anabelian phenomenon concerning curves over algebraically closed fields of characteristic $p>0$ which shows that the topological structures of moduli spaces of curves can be understood by their fundamental groups.

\subsubsection{} Let $\mbF_{p}$ be the prime field of characteristic $p>0$, and let $\mcM_{g, n, \mbZ}^{\rm ord}$ be the moduli stack over $\mbZ$ parameterizing smooth $n$-pointed stable curves of type $(g, n)$ (in the sense of \cite{K}). We put $\mcM^{\rm ord}_{g, n, \mbF_{p}}:= \mcM^{\rm ord}_{g, n, \mbZ} \times_{\mbZ}  \mbF_{p}$. Note that the set of marked points of an $n$-smooth pointed stable curve admits a natural action of the $n$-symmetric group $S_{n}$. Moreover, we denote by $\mcM_{g, n, \mbF_{p}}:=[\mcM^{\rm ord}_{g, n, \mbF_{p}}/S_{n}]$ the quotient stack, and denote by $M_{g, n, \mbF_{p}}$ the coarse moduli space of $\mcM_{g, n, \mbF_{p}}$.

Let $q \in M_{g, n, \mbF_{p}}$ be an arbitrary point, $k(q)$ the residue field of $q$, $k_{q}$ an algebraically closed field containing $k(q)$, and $V_{q}:= \overline {\{q\}}$ the topological closure of $\{q\}$ in $M_{g, n, \mbF_{p}}$. Write $(X_{k_q}, D_{X_{k_q}})$ for the smooth pointed stable curve of type $(g, n)$ over $k_{q}$ determined by the natural morphism $\spec k_{q} \rightarrow M_{g, n, \mbF_{p}}$ and put $U_{X_{k_{q}}} := X_{k_q} \setminus D_{X_{k_q}}$. In particular, we put $(X_{k_{q}}, D_{X_{k_{q}}}):= (X_{q}, D_{X_{q}})$ and $U_{X_q}:= X_{q} \setminus D_{X_{q}}$ if  $k_{q}$ is an algebraic closure of $k(q)$. Since the isomorphism class of the tame fundamental group $\pi^{\rm t}_{1}(U_{X_{k_{q}}})$ depends only on $q$, we shall write $\pi_{1}^{\rm t}(q)$ for the tame fundamental group $\pi^{\rm t}_{1}(U_{X_{k_{q}}})$.

\subsubsection{}\label{sec122} We maintain the notation introduced above. The weak Isom-version conjecture of Tamagawa can be reformulated as follows:

\begin{weakisomGC}
Let $q_{i} \in M_{g, n, \mbF_{p}}$, $i\in \{1,2\}$, be an arbitrary point of $M_{g, n, \mbF_{p}}$. The set of continuous isomorphisms of profinite groups $$ {\rm Isom}_{\rm pg}(\pi_{1}^{\rm t}(q_{1}),\pi_{1}^{\rm t}(q_{2}))$$ is non-empty if and only if $V_{q_{1}}=V_{q_{2}}$ (namely, $U_{X_{q_{1}}} \cong U_{X_{q_{2}}}$ as schemes).
\end{weakisomGC}

\noindent
The weak Isom-version conjecture means that {\it moduli spaces of curves can be reconstructed ``as sets'' from the isomorphism classes of the tame fundamental groups of curves}. This conjecture has been {\it only} confirmed by Tamagawa (\cite[Theorem 0.2]{T4}) in the following case:
\begin{quote}
Suppose that $q_{1}$ is a {\it closed} point of $M_{0, n, \mbF_{p}}$. Then the weak Isom-version conjecture holds true.
\end{quote}

Next, we propose a new conjecture as follows, that is {\it the weak Hom-version of the Grothendieck conjecture for curves over algebraically closed fields of characteristic $p>0$} (or is called \emph{weak Hom-version conjecture} for simplicity), as a  generalization of  the weak Isom-version conjecture.

\begin{weakhomGC}
Let $q_{i} \in M_{g, n, \mbF_{p}}$, $i\in \{1,2\}$, be an arbitrary point of $M_{g, n, \mbF_{p}}$. The set of open continuous homomorphisms of profinite groups $${\rm Hom}^{\rm op}_{\rm pg}(\pi_{1}^{\rm t}(q_{1}), \pi_{1}^{\rm t}(q_{2}))$$ is non-empty if and only if $V_{q_{1}}\supseteq V_{q_{2}}.$
\end{weakhomGC}

\noindent
The weak Hom-version conjecture means that  {\it the sets of deformations} of a smooth pointed stable curve can be reconstructed group-theoretically from the sets of open continuous homomorphisms of their tame fundamental groups.  Therefore, it provides {\it a new kind of anabelian phenomenon}:
\begin{quote}
The moduli spaces of curves in positive characteristic can be understood not only as sets but also ``{\it as topological spaces}'' from the {\it sets of open continuous homomorphisms} of tame fundamental groups of curves in positive characteristic.
\end{quote}
Roughly speaking, this means that a smooth pointed stable curve corresponding to a geometric point over $q_{2}$ can be deformed to a smooth pointed stable curve corresponding to a geometric point over $q_{1}$ if and only if the set of open continuous homomorphisms  of tame fundamental groups $\text{Hom}_{\text{pg}}^{\rm op}(\pi_{1}^{\rm t}(q_{1}), \pi_{1}^{\rm t}(q_{2}))$ is not empty. 

\subsection{Main results}

\subsubsection{}
The main result of the present paper is the following (see Theorem \ref{them-4} (iv) for a more general statement):
\begin{theorem}\label{maintheorem}
The Weak Hom-version Conjecture holds when $q_{1}$ is a closed point of $M_{0, n, \mbF_{p}}$.
\end{theorem}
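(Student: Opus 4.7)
The plan is to reduce to Tamagawa's Weak Isom-version result at closed points of $M_{0, n, \mbF_{p}}$ (\cite[Theorem 0.2]{T4}) via a cover-theoretic analysis forced by openness. Since $q_{1}$ is closed, $V_{q_{1}} = \{q_{1}\}$, so the condition $V_{q_{1}} \supseteq V_{q_{2}}$ is equivalent to $q_{2} = q_{1}$. The ``if'' direction is then immediate: take the identity of $\pi_{1}^{\rm t}(q_{1})$. All of the content is in the ``only if'' direction: given an open continuous homomorphism $\phi : \pi_{1}^{\rm t}(q_{1}) \to \pi_{1}^{\rm t}(q_{2})$, the goal is to prove $q_{2} = q_{1}$.

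To begin, set $H := \Im(\phi)$, which is an open subgroup of $\pi_{1}^{\rm t}(q_{2})$ by openness of $\phi$. By tame Galois theory, $H$ corresponds to a connected tame cover $f : (Y, D_{Y}) \to (X_{k_{q_{2}}}, D_{X_{k_{q_{2}}}})$ with $\pi_{1}^{\rm t}(U_{Y}) \cong H$, and $\phi$ factors as a continuous surjection $\tilde{\phi} : \pi_{1}^{\rm t}(q_{1}) \twoheadrightarrow \pi_{1}^{\rm t}(U_{Y})$. Passing to maximal pro-prime-to-$p$ quotients of $\tilde{\phi}$ yields a surjection between free pro-prime-to-$p$ groups of ranks $n - 1$ and $2g_{Y} + n_{Y} - 1$, so $2g_{Y} + n_{Y} \leq n$. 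On the other hand, Riemann--Hurwitz for $f$ of degree $d$ with target of type $(0, n)$ gives $2g_{Y} + n_{Y} = d(n-2) + 2$; since $n \geq 3$ by stability, these two constraints force $d = 1$. Thus $f$ is an isomorphism and $\tilde{\phi}$ is a continuous surjection $\pi_{1}^{\rm t}(q_{1}) \twoheadrightarrow \pi_{1}^{\rm t}(q_{2})$ between tame fundamental groups of smooth pointed stable curves of the same type $(0, n)$.

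The main step is then to show this surjection is in fact an isomorphism and that $q_{2}$ is itself a closed point. For this I would compare group-theoretic invariants of $\pi_{1}^{\rm t}(q_{i})$ sensitive to the location of $q_{i}$ in $M_{0, n, \mbF_{p}}$, in particular generalized Hasse--Witt invariants extracted from the $p$-part of the abelianizations of tame prime-to-$p$ cyclic covers, which vary in a controlled semicontinuous way under specialization in characteristic $p$. The surjection $\tilde{\phi}$ induces inclusions of open-normal-subgroup lattices and hence inequalities on these invariants; combining these with the one-sided specialization inequalities, and exploiting that $q_{1}$ is already closed (so its invariants are ``maximally specialized''), should pin $q_{2}$ into the same specialization class as $q_{1}$ and force $\tilde{\phi}$ to be an isomorphism. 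Tamagawa's theorem \cite[Theorem 0.2]{T4} then delivers $q_{2} = q_{1}$. The anticipated main obstacle is precisely this last step: the pro-prime-to-$p$ part of $\pi_{1}^{\rm t}$ is too coarse to detect the location of a point in $M_{0, n, \mbF_{p}}$, so one must exploit the $p$-power-order tame covers in an essential way---this is where the genuine anabelian content of the positive characteristic situation has to be used, going well beyond the pro-prime-to-$p$ input that sufficed in step one.
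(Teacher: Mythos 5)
Your first step --- showing that the open continuous homomorphism $\phi\colon \pi_1^{\rm t}(q_1) \to \pi_1^{\rm t}(q_2)$ must be a surjection --- is correct and coincides with the paper's Lemma~\ref{lemsurj}, specialized to $(g_X,n_X)=(0,n)$: the rank count on maximal prime-to-$p$ quotients together with Riemann--Hurwitz does force the image of $\phi$ to have index one.

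The genuine gap is in the second step, and it is the heart of the theorem. You propose to first show that $\tilde\phi$ is an isomorphism and then invoke Tamagawa's Isom-version theorem. This gets the logic backward and, more importantly, the first half of that program is not achievable by ``invariant comparison'' alone. The only tool available for upgrading a surjection of topologically finitely generated profinite groups to an isomorphism --- the Hopficity criterion \cite[Proposition 16.10.6]{FJ}, used at the very end of the proof of Theorem~\ref{them-3} --- requires knowing \emph{in advance} that $\pi_1^{\rm t}(q_1)$ and $\pi_1^{\rm t}(q_2)$ are abstractly isomorphic, which is precisely the content of the weak Isom-version conjecture and hence not available a priori. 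The introduction flags this explicitly: by \cite[Theorem 0.3]{T5}, one can have ${\rm Hom}^{\rm op}_{\rm pg}$ nonempty while ${\rm Isom}_{\rm pg}$ is empty, so no invariant-level argument can first certify that $\phi$ is an isomorphism before the curves are identified. Your appeal to semicontinuity of generalized Hasse--Witt invariants under specialization also does not apply directly, because $\phi$ is just an abstract open homomorphism, not a priori induced by any specialization map of curves. What the paper actually does is establish $U_{X^{\rm m}_1}\cong U_{X^{\rm m}_2}$ as schemes \emph{first}, and only afterwards deduce that $\phi$ is an isomorphism. The machinery required for this, which your proposal omits entirely, is the content of Section~\ref{mpanabelian}: a mono-anabelian reconstruction of the inertia subgroups of marked points compatible with arbitrary open \emph{surjections} rather than only isomorphisms (Theorem~\ref{them-2}), and a compatible reconstruction of the field structure attached to each inertia group (Proposition~\ref{pro-4}). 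The $p$-average inequality ${\rm Avr}_p(H_1)\geq{\rm Avr}_p(H_2)$ (Proposition~\ref{coro-p-average}) that you correctly single out as the key new input is used exactly there, at precise junctures inside those reconstructions --- for instance in Lemma~\ref{lem-3} to force \'etale covers to correspond to \'etale covers, and in Proposition~\ref{pro-4} to compare generalized Hasse--Witt invariants and recover the additive structure --- not as a single ``pin down the specialization class'' comparison. Only once inertia subgroups and field structures are shown to transport along $\phi$ can one rerun Tamagawa's linear-condition computation of coordinates (Lemma~\ref{lem-8} and Step~1/Step~2 in the proof of Theorem~\ref{them-3}) in the Hom setting and conclude the scheme isomorphism, hence $q_1=q_2$.
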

\noindent
Theorem \ref{maintheorem} follows from the following ``Hom-type" anabelian result (see  Theorem \ref{them-3} for a more precise  statement) which is a generalization of Tamagawa's result (i.e. \cite[Theorem 0.2]{T4}):

\begin{theorem}\label{them-0-1}
Let $q_{i} \in M_{g, n, \mbF_{p}}$, $i\in \{1,2\}$, be an arbitrary point of $M_{g, n, \mbF_{p}}$. Suppose that $q_{1}$ is a closed point of $M_{g, n, \mbF_{p}}$. Then the set of open continuous homomorphisms $${\rm Hom}^{\rm op}_{\rm pg}(\pi_{1}^{\rm t}(q_{1}), \pi_{1}^{\rm t}(q_{2}))$$ is non-empty if and only if $U_{X_{q_1}} \cong U_{X_{q_2}}$ as schemes. 
\end{theorem}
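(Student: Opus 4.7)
The ``if'' direction is immediate: an isomorphism of schemes $U_{X_{q_1}} \cong U_{X_{q_2}}$ induces an isomorphism $\pi_1^{\rm t}(q_1) \cong \pi_1^{\rm t}(q_2)$, which is in particular an open continuous homomorphism.

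For the ``only if'' direction, let $\phi \in \text{Hom}^{\rm op}_{\rm pg}(\pi_1^{\rm t}(q_1), \pi_1^{\rm t}(q_2))$. The first step is to show that $\phi$ is actually surjective, reducing to an Isom-type question. Put $H := \phi(\pi_1^{\rm t}(q_1))$, an open subgroup of $\pi_1^{\rm t}(q_2)$ of some index $d$; it corresponds to a connected finite tame étale cover $(X^H, D_{X^H}) \to (X_{k_{q_2}}, D_{X_{k_{q_2}}})$ of type $(g^H, n^H)$. Tame Riemann--Hurwitz yields the identity $2g^H + n^H - 2 = d(2g + n - 2)$. On the other hand, the induced surjection $\pi_1^{\rm t}(q_1) \twoheadrightarrow H \cong \pi_1^{\rm t}(U^{X^H})$, passed through the maximal pro-$\ell$ abelianization for a prime $\ell \neq p$ (which depends only on the numerical type of the curve), yields a surjection of free $\mathbb{Z}_\ell$-modules and hence the rank inequality $2g + n - 1 \geq 2g^H + n^H - 1$ (the $n = 0$ case is analogous with ranks $2g$ and $2g^H$). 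Combining these forces $(d - 1)(2g + n - 2) \leq 0$, and stability of the pointed stable curve ($2g + n - 2 > 0$) implies $d = 1$. Hence $\phi$ is a continuous surjection and $(g^H, n^H) = (g, n)$.

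The second step is to promote the surjection to an isomorphism of schemes. Because both $\pi_1^{\rm t}(q_i)$ are topologically finitely generated, they are Hopfian, so it suffices to prove $\pi_1^{\rm t}(q_1) \cong \pi_1^{\rm t}(q_2)$ as abstract profinite groups. The surjection $\phi$ transports, for every finite group $G$, each $G$-Galois tame étale cover of $X_{k_{q_2}}$ to a $G$-Galois tame étale cover of $X_{k_{q_1}}$ of the same degree (by precomposition with $\phi$); applying Step 1 to each open normal subgroup of $\pi_1^{\rm t}(q_2)$ shows the associated pairs of covers have matching Euler characteristics. A group-theoretic reconstruction of inertia subgroups then matches ramification profiles, so the types $(g^{H'}, n^{H'})$ also match. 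Tamagawa's group-theoretic reconstruction of the Hasse--Witt invariants of tame covers from their tame fundamental groups now applies on both sides. Since $q_1$ is a closed point, the Hasse--Witt data on the $q_1$-side is rigid and determines the $\overline{\mbF}_p$-isomorphism class of $(X_{q_1}, D_{X_{q_1}})$; matching it from the $q_2$ side via $\phi$ forces $q_2$ to correspond to the same isomorphism class. Tamagawa's Isom theorem (\cite[Theorem 0.2]{T4}) then yields $U_{X_{q_1}} \cong U_{X_{q_2}}$.

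The main obstacle is the second step. Step 1 cleanly reduces the Hom-version to an Isom-type statement, but the mere surjection $\phi$ only gives one-sided inequalities on $p$-adic invariants—in particular $\sigma(q_1) \geq \sigma(q_2)$ for the $p$-ranks—and these must be promoted to equalities. Closing this gap requires the full anabelian-style comparison of Hasse--Witt invariants across the entire tower of tame covers, leveraging crucially the closedness of $q_1$ to ensure that the invariants on the $q_1$-side form a finite, rigid set of arithmetic data that pins down the $\overline{\mbF}_p$-isomorphism class on the $q_2$-side. This delicate reconstruction step is the true substance of the proof, with the Riemann--Hurwitz argument of Step 1 serving as the decisive preliminary reduction.
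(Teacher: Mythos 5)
Your Step 1 (that any open continuous $\phi$ must be surjective) is correct and is in fact cleaner than the paper's Lemma \ref{lemsurj}: you combine the scaling of the Euler characteristic $2g+n-2$ under tame covers with the rank of the pro-$\ell$ abelianization, and you get $d=1$ without invoking the $p$-average comparison (Proposition \ref{coro-p-average}) at this stage. This is a genuine streamlining.

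Step 2, however, has a real gap, and its overall logic differs from the paper's in a way that matters. You propose to establish an \emph{abstract} isomorphism $\pi_1^{\rm t}(q_1)\cong\pi_1^{\rm t}(q_2)$ and then cite Tamagawa's Isom theorem. The difficulty is in getting that abstract isomorphism. When you write that applying Step 1 to each open normal subgroup ``shows the associated pairs of covers have matching Euler characteristics,'' that is true but trivial (they are tame covers of the same degree of curves of identical type, so $2g_{H_1}+n_{H_1}-2=2g_{H_2}+n_{H_2}-2$ automatically); it does \emph{not} give the matching of the individual invariants $(g_{H_1},n_{H_1})=(g_{H_2},n_{H_2})$, which is what one actually needs. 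Getting this equality of types is one of the main technical obstacles; the paper devotes Lemma \ref{lem-6} (abelian Galois group, using the $p$-average inequality $\text{Avr}_p(H_1)\geq\text{Avr}_p(H_2)$ to force equality of genera) and Lemma \ref{lem-7} (reduction through normal series) to it. Your next sentence, ``A group-theoretic reconstruction of inertia subgroups then matches ramification profiles, so the types also match,'' is circular: in the paper, the compatibility of the inertia reconstruction with $\phi$ (Theorem \ref{them-2}) is \emph{built on} knowing that types match along a cofinal system of covers, via Lemmas \ref{lem-5}--\ref{lem-7}. You cannot use it to conclude matching of types.

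The remainder of your Step 2 -- appealing to ``rigidity'' of Hasse--Witt data and to the closedness of $q_1$ -- is too vague to carry the argument and also misidentifies the route the paper takes. The paper does not first prove an abstract group isomorphism and then apply Tamagawa's Isom theorem. It instead reconstructs the \emph{scheme} $U_{X_{q_1}}$ directly from the surjection: after establishing compatibility of the inertia reconstruction (Theorem \ref{them-2}) and of the field structure on $\mbF_{\widetilde e}$ (Proposition \ref{pro-4}), it shows that Tamagawa's linear-condition criterion transports across $\phi$ (Lemma \ref{lem-8}), and then carries out the genus-$0$ coordinate calculation (Theorem \ref{them-3}) to pin down the marked points $e_{2,u}\in\overline\mbF_p$ and exhibit the scheme isomorphism. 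The closedness of $q_1$ is used precisely to place those coordinates in $\overline\mbF_p$ so that the linear-condition machinery applies, not as some rigidity of Hasse--Witt invariants. The group isomorphism $\pi_1^{\rm t}(q_1)\cong\pi_1^{\rm t}(q_2)$ is then deduced as a \emph{corollary} of the scheme isomorphism (and one recovers $\mathrm{Hom}^{\rm op}=\mathrm{Isom}$ by Hopfianness), not as an intermediate step. So, while your Step 1 is an improvement, Step 2 as written contains a circularity, omits the real technical content (matching types across $\phi$ and the compatibility of the mono-anabelian reconstruction with arbitrary open surjections), and inverts the dependency structure of the actual proof.
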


Note that Theorem \ref{them-0-1} is {\it essentially} different from \cite[Theorem 0.2]{T4}. The reason is the following: We a priori {\it do not  know} whether or not $${\rm Isom}_{\rm pg}(\pi_{1}^{\rm t}(q_{1}), \pi_{1}^{\rm t}(q_{2}))$$ is non-empty even through ${\rm Hom}^{\rm op}_{\rm pg}(\pi_{1}^{\rm t}(q_{1}), \pi_{1}^{\rm t}(q_{2}))$ is {\it non-empty}. In fact, for arbitrary $q_{i} \in M_{g, n, \mbF_{p}}$, $i\in \{1,2\}$, we have $${\rm Isom}_{\rm pg}(\pi_{1}^{\rm t}(q_{1}), \pi_{1}^{\rm t}(q_{2}))=\emptyset, \ {\rm Hom}^{\rm op}_{\rm pg}(\pi_{1}^{\rm t}(q_{1}), \pi_{1}^{\rm t}(q_{2}))\neq \emptyset$$ in general (\cite[Theorem 0.3]{T5}).

On the other hand, to verify Theorem \ref{them-0-1}, we need to establish various anabelian reconstructions from {\it open continuous homomorphisms} of tame fundamental groups which are much harder than the case of {\it isomorphisms} in general. We explain in more detail about this point in the  following.

\subsubsection{}
Let us explain the main differences between the proofs of Tamagawa's result (i.e. \cite[Theorem 0.2]{T4}) and our result (i.e. Theorem \ref{them-0-1}), and the new  ingredient  in our proof. First, we recall the key points of the proof of Tamagawa's result. Roughly speaking, Tamagawa's proof consists of two parts:
\begin{itemize}
\item[(1)] He proved that {\it the sets of inertia subgroups} of  marked points and {\it the field structures} associated to inertia subgroups of marked points of smooth pointed stable curves can be reconstructed group-theoretically from tame fundamental groups. This is the most difficult part of Tamagawa's proof.

\item[(2)] By using the inertia subgroups and their associated field structures, if $g=0$, he proved that the coordinates of marked points can be calculated group-theoretically.
\end{itemize}

The group-theoretical reconstructions in Tamagawa's proofs (1) and (2) are {\it isomorphic version reconstructions}. This means that the reconstructions should fix an isomorphism class of a tame fundamental group.  To explain this, let us show an example.  Let $U_{X_{i}}$, $i\in \{1, 2\}$, be a curve of type $(g_{X}, n_{X})$ over an algebraically closed field $k$ of characteristic $p>0$ introduced above, $\pi_{1}^{\rm t}(U_{X_{i}})$ the tame fundamental group of $U_{X_{i}}$,  $\phi: \pi_{1}^{\rm t}(U_{X_{1}}) \rightarrow \pi_{1}^{\rm t}(U_{X_{2}})$ an open continuous homomorphism, $H_{2} \subseteq \pi_{1}^{\rm t}(U_{X_{2}})$ an open subgroup, and $H_{1}:=\phi^{-1}(H_{2})$. In Tamagawa's proof, since $\phi$ is an {\it isomorphism}, we have  $H_{1}\simeq H_{2}$. Then the group-theoretical reconstruction for  types implies that the type $(g_{X_{H_{1}}}, n_{X_{H_{1}}})$ and the type $(g_{X_{H_{2}}}, n_{X_{H_{2}}})$ of the curves corresponding to $H_{1}$ and $H_{2}$, respectively, are equal. This is a key point in the proof of Tamagawa's group-theoretical reconstruction of the inertia subgroups of marked points. Unfortunately, his method cannot be applied
to the present paper. The reason is that we need to treat the case where $\phi$ is an {\it arbitrary open continuous homomorphism}. Since $H_{1}$ is not isomorphic to $H_{2}$ in general (e.g. specialization homomorphism), we {\it do not} know whether or not $(g_{X_{H_{1}}}, n_{X_{H_{1}}})=(g_{X_{H_{2}}}, n_{X_{H_{2}}}).$ This is one of the main difficulties of ``Hom-type'' problems appeared in anabelian geometry. Similar difficulties for generalized Hasse-Witt invariants will appear if we try to reconstruct the field structure associated to inertia subgroups of marked points.

To overcome the difficulties mentioned above, we have the following key observation:
\begin{quote}
The inequalities of $\text{Avr}_{p}(H_{i})$ (i.e., the $p$-averages of generalized Hasse-Witt invariants (see \ref{paverage})) induced by $\phi$ play roles of the
comparability of (outer) Galois representations in the theory of anabelian
geometry of curves over algebraically closed fields of characteristic $p > 0$.
\end{quote}
In the present paper, our method for reconstructing inertia subgroups of marked points is completely different from Tamagawa's reconstruction. We develop a new {\it group-theoretical algorithm} for reconstructing the inertia subgroups of marked points whose input datum is a profinite group which is isomorphic to $\pi_{1}^{\rm t}(U_{X_{i}})$, $i\in \{1, 2\}$, and whose output data are inertia subgroups of marked points (Theorem \ref{them-2}). Moreover, we prove that the group-theoretical algorithm and the reconstructions for field structures are {\it compatible} with arbitrary surjection $\phi$ (Proposition \ref{pro-4}). By using Theorem \ref{them-2} and Proposition \ref{pro-4}, we may prove that Tamagawa's calculation of coordinates is compatible with our reconstructions. This implies Theorem \ref{them-0-1}.


\subsection{Some further developments}

\subsubsection{Moduli spaces of fundamental groups}
Let us explain some further developments for the anabelian phenomenon concerning the weak Hom-verson conjecture. In \cite{Y6}, the second author of the present paper introduced a topological space $\Pi_{g,n}$ (or more general, $\overline \Pi_{g, n}$) determined group-theoretically by the tame fundamental groups of smooth pointed stable curves (or more general, the geometric log \'etale fundamental groups of arbitrary pointed stable curves) of type $(g, n)$ which is called the {\it moduli spaces of fundamental groups of curves}, whose underlying set is the sets of isomorphism classes of fundamental groups, and whose topology is determined by the sets of finite quotients of fundamental groups.  Moreover, he posed the so-called {\it homeomorphism conjecture}, roughly speaking, which says that (by quotiening a certain equivalence relation induced by Frobenius actions) the moduli spaces of curves are \emph{homeomorphic} to the moduli spaces of fundamental groups.

In  the present literatures, the term ``anabelian''  means that a geometric object can be determined by its fundamental group.  Furthermore, the homeomorphism conjecture concerning moduli spaces of fundamental groups supplies a new point of view to understand anabelian phenomena as follows:
\begin{quote}
The term ``anabelian'' means that not only a geometric object can be determined by its fundamental groups, but also a certain {\it moduli space of geometric objects} can be determined by the fundamental groups of geometric objects.
\end{quote}
Under this point of view, {\it the homeomorphism conjecture is regarded as the analogue of a famous theorem in the theory of classic Teichm\"uller spaces} which states that the Teichm\"uller spaces of complex hyperbolic curves are homeomorphic to the spaces of discrete and faithful representations of  topological fundamental groups of underlying surfaces into the group $PSL_{2}(\mbR)$.

Now Theorem \ref{maintheorem} implies that $M_{0, 4, \mbF_{p}}$ is {\it homeomorphic} to $\Pi_{0,4}$ {\it as topological spaces} (note that Tamagawa's result (i.e. \cite[Theorem 0.2]{T4}) only says that the natural map $M_{0, 4, \mbF_{p}} \rightarrow \Pi_{0,4}$ is a bijection {\it as sets}). Moreover, based on \cite{Y1}, \cite{Y3}, \cite{Y4}, \cite{Y5}, and the main results of the present paper, the homeomorphism conjecture is confirmed  for {\it$1$-dimensional  moduli spaces of  pointed stable curves} in \cite{Y6} and \cite{Y7}. For the homeomorphism conjecture in the case of  higher dimensional moduli spaces of curves,  the weak Hom-version conjecture and the pointed collection conjecture (see Section \ref{pcc} of the present paper) are also the main steps toward understanding it (see \cite[Section 1.2.3]{Y8}).

\subsubsection{The sets of finite quotients of tame fundamental groups} We maintain the notation introduced in \ref{sec111}. The techniques developed in \S\ref{mpanabelian} of the present paper have important applications for understanding  the set of finite quotients $\pi_{A}^{\rm t}(U_{X})$ of the {\it  tame fundamental groups} $\pi_{1}^{\rm t}(U_{X})$ of $U_{X}$.

Note that, if $U_{X}$ is {\it affine}, the set $\pi_{A}^{\text{\'et}}(U_{X})$ of finite quotients  of the {\it \'etale fundamental groups} $\pi_{1}^{\text{\'et}}(U_{X})$ of $U_{X}$ can be completely determined by its type $(g_{X}, n_{X})$ (i.e. Abhyankar's conjecture proved by Raynaud and D. Harbater). However, the structure of $\pi_{1}^{\text{\'et}}(U_{X})$ cannot be carried out from $\pi_{A}^{\text{\'et}}(U_{X})$ since $\pi_{1}^{\text{\'et}}(U_{X})$ is not topologically finitely generated when $U_{X}$ is affine.

By contrast, the isomorphism class of $\pi_{1}^{\rm t}(U_{X})$ can be completely determined by $\pi_{A}^{\rm t}(U_{X})$ since $\pi_{1}^{\rm t}(U_{X})$ is topologically finitely generated, and one {\it cannot} expect that there exists an explicit description for the entire set $\pi_{A}^{\rm t}(U_{X})$ since there exists {\it anabelian phenomenon} mentioned above (i.e. $\pi_{A}^{\rm t}(U_{X})$ depends on the isomorphism class of $U_{X}$). On the other hand, for understanding more precisely the relationship between the structures of tame fundamental groups and the anabelian phenomena in positive characteristic world, it is naturally to ask the following  interesting problem:
\begin{quote}
How does the scheme structure of $U_{X}$ affect explicitly the set of finite quotients $\pi_{A}^{\rm t}(U_{X})$?
\end{quote}
In \cite{Y9}, by applying the techniques developed in \S\ref{mpanabelian} of the present paper and \cite[Theorem 1.2]{Y5}, we obtain the following result:
\begin{quote}
Let $q_{1}\in M_{g_{1}, n_{1}, \mbF_{p}}$ and $q_{2}\in M_{0, n_{2}, \mbF_{p}}$ be arbitrary points and $\pi_{A}^{\rm t}(q_{i})$ the set of finite quotients of the tame fundamental group $\pi_{1}^{\rm t}(q_{i})$. Suppose that $q_{2}$ is a closed point of $M_{0, n_{2}, \mbF_{p}}$, and that $\pi_{1}^{\rm t}(q_{1})\not\cong \pi_{1}^{\rm t}(q_{2})$. Then we can {\it construct explicitly} a finite group $G_{q_{2}}$ depending on $q_{2}$ such that $G_{q_{2}} \in \pi_{A}^{\rm t}(q_{1})$ and $G_{q_{2}} \not\in \pi_{A}^{\rm t}(q_{2})$.
\end{quote}

\subsection{Structure of the present paper}
The present paper is organized as follows. In Section \ref{sec-1}, we formulate the the weak Hom-version conjecture and the pointed collection conjecture. In Section \ref{mpanabelian}, we give a group-theoretical algorithm for reconstructions of inertia subgroups associated to marked points, and prove that the group-theoretical algorithm is compatible with arbitrary open surjective homomorphisms of tame fundamental groups. In Section \ref{sec-5}, we prove our main results.

\bigskip
\subsection{Acknowledgements}

The second author was supported by JSPS Grant-in-Aid for Young Scientists Grant Numbers 16J08847 and 20K14283.

\bigskip

\section{Conjectures}\label{sec-1}
In this section, we formulate two new conjectures concerning anabelian geometry of curves over algebraically closed fields of characteristic $p>0$.

\subsection{The weak Hom-version conjecture} In this subsection, we formulate the first conjecture of the present paper which we call ``the weak Hom-version conjecture''.

\subsubsection{}\label{curves}
Let $k$ be an algebraically closed field of characteristic $p>0$, and  let $$(X, D_{X})$$ be a smooth pointed stable curve of type $(g_{X}, n_{X})$ over $k$, where $X$ denotes the (smooth) underlying curve of genus $g_{X}$ and $D_{X}$ denotes the (ordered) finite set of marked points with cardinality $n_{X}:=\#(D_{X})$ satisfying \cite[Definition 1.1 (iv)]{K} (i.e. $2g_{X}+n_{X}-2>0$). Note that $U_{X}:= X \setminus D_{X}$ is a hyperbolic curve over $k$.

Let $(Y, D_{Y})$ and $(X, D_{X})$ be smooth pointed stable curves over $k$, and let $f: (Y, D_{Y}) \rightarrow(X, D_{X})$ be a morphism of smooth pointed stable curves over $k$. We shall say that $f$ is {\it \'etale} (resp. {\it tame}, {\it Galois \'etale}, {\it Galois tame}) if $f$ is \'etale over $X$ (resp. $f$ is \'etale over $U_{X}$ and is at most tamely ramified over $D_{X}$, $f$ is a Galois covering and is \'etale, $f$ is a Galois covering and is tame).

By choosing a base point of $x \in U_{X}$, we have the tame fundamental group $\pi^{\rm t}_{1}(U_{X}, x)$ of $U_{X}$ and the \'etale fundamental group $\pi_{1}(X, x)$ of $X$. Since we only focus on the isomorphism classes of fundamental groups in the present paper, for simplicity of notation, we omit the base point and denote by $\pi_{1}^{\rm t}(U_{X}) \ \text{and} \ \pi_{1}(X)$ the tame fundamental group $\pi^{\rm t}_{1}(U_{X}, x)$ of $U_{X}$ and the \'etale fundamental group $\pi_{1}(X, x)$ of $X$, respectively. Note that there is a natural continuous surjective homomorphism $\pi_{1}^{\rm t}(U_{X}) \twoheadrightarrow \pi_{1}(X).$

\subsubsection{}\label{moduli212}
Let $\overline \mbF_{p}$ be an algebraic closure of $\mbF_{p}$, and let $\mcM^{\rm ord}_{g, n, \mbF_{p}}$ be the moduli stack over $\mbZ$ parameterizing smooth pointed stable curves of type $(g, n)$ in the sense of \cite[Definition 1.1]{K}. The set of marked points of a smooth pointed stable curve admits a natural action of the $n$-symmetric group $S_{n}$, we put $\mcM_{g, n, \mbZ}:= [\mcM_{g, n, \mbZ}^{\rm ord}/S_{n}]$ the quotient stack. Moreover, we denote by $\mcM_{g, n}^{\rm ord} := \mcM_{g, n, \mbZ} \times_{\mbZ} \overline \mbF_{p}$, $\mcM_{g, n, \mbF_{p}}:= \mcM_{g, n, \mbZ} \times_{\mbZ} \mbF_{p}$, and $\mcM_{g, n}:=\mcM_{g, n, \mbZ} \times_{\mbZ} \overline \mbF_{p}$, and denote by $M_{g, n}^{\rm ord}$, $M_{g, n, \mbF_{p}}$, and $M_{g, n}$ the coarse moduli spaces of $\mcM_{g, n}^{\rm ord}$, $\mcM_{g, n, \mbF_{p}}$, and $\mcM_{g, n}$, respectively.




Let $q\in M_{g, n}^{\rm ord}$ be an arbitrary point and $k(q)$ the residue field of $q$, and $k_{q}$ an algebraically closed field containing $k(q)$. Write $(X_{k_q}, D_{X_{k_q}})$ for the smooth pointed stable curve of type $(g, n)$ over $k_{q}$ determined by the natural morphism $\spec k_{q} \rightarrow \spec k(q) \rightarrow M_{g, n}^{\rm ord}$ and $U_{X_{k_q}}$ for $X_{k_q} \setminus D_{X_{k_q}}$. In particular, if $k_{q}$ is an algebraic closure of $k(q)$, we shall write $(X_{q}, D_{X_{q}})$ for $(X_{k_q}, D_{X_{k_q}})$.

Since the isomorphism class of the tame fundamental group $\pi^{\rm t}_{1}(U_{X_{k_q}})$ depends only on $q$ (i.e., the isomorphism class does not depend on the choices of $k_{q}$), we shall write $\pi^{\rm t}_{1}(q) \ \text{and} \ \pi^{\rm t}_{A}(q)$ for $\pi^{\rm t}_{1}(U_{X_{k_q}})$ and the set of finite quotients of $\pi^{\rm t}_{1}(U_{X_{k_q}})$, respectively. \cite[Proposition 16.10.7]{FJ} implies that for any points $q_{1}, q_{2} \in M_{g, n}^{\rm ord}$,  $\pi^{\rm t}_{1}(q_{1})  \cong \pi^{\rm t}_{1}(q_{2})$ as profinite groups if and only if $\pi^{\rm t}_{A}(q_{1})=\pi^{\rm t}_{A}(q_{2})$ as sets.

On the other hand, Let $q\in M_{g, n}^{\rm ord}$ and  $q'\in M_{g, n, \mbF_{p}}$ be arbitrary points. We denote by $V_{q}\subseteq M_{g, n}^{\rm ord}$ and $V_{q'} \subseteq M_{g, n, \mbF_{p}}$ the topological closures of $q$ and $q'$ in $M^{\rm ord}_{g, n}$ and $M_{g, n, \mbF_{p}}$, respectively. 


\subsubsection{} We have the following definition.
\begin{definition}\
\label{def-2}
\begin{enumerate}
  \item  Let $c_{1}, c_{2} \in M_{g, n}^{\rm ord, cl}$ be {\it closed points}, where $(-)^{\rm cl}$ denotes the set of closed points of $(-)$. Then $c_{1} \sim_{fe} c_{2}$ if there exists $m\in\mbZ$ such that $ \nu(c_{2})=\nu(c_{1}^{(m)})$, where $c_{1}^{(m)}$ denotes the closed point corresponding to the curve obtained by $m$th Frobenius twist of the curve corresponding to $c_{1}$. Here ``\emph{fe}" means ``Frobenius equivalence".
  \item Let $q_{1}, q_{2} \in M_{g, n}^{\rm ord}$ be arbitrary points. We denote by $V_{q_1} \supseteq_{fe} V_{q_2}$ if, for each closed point $c_{2} \in V^{\rm cl}_{q_2}$, there exists a closed point $c_{1} \in V^{\rm cl}_{q_{1}}$ such that $c_{1} \sim_{fe} c_{2}$. Moreover, we denote by $V_{q_1} =_{fe} V_{q_2}$ if $V_{q_1} \supseteq_{fe} V_{q_2}$ and $V_{q_1} \subseteq_{fe} V_{q_2}$. Moreover, we also denote by $q_{1} \sim_{fe} q_{2}$ if $V_{q_{1}}=_{fe} V_{q_{2}}$.
\end{enumerate}


\end{definition}



We have the following proposition.

\begin{proposition}\label{pro-5} Let  $\omega: M_{g, n}^{\rm ord} \rightarrow M_{g, n, \mbF_{p}}$ be the morphism induced by the natural morphism $\mcM_{g, n}^{\rm ord} \rightarrow \mcM_{g, n, \mbF_{p}}$. Let $i\in \{1,2\}$, and let $q_{i} \in M_{g, n}^{\rm ord}$ and $q'_{i} := \omega(q_{i}) \in M_{g, n, \mbF_{p}}$. Then we have $V_{q_{1}} \supseteq_{fe} V_{q_{2}}$ if and only if $V_{q'_{1}} \supseteq V_{q'_{2}}$. In particular, we have $V_{q_{1}} =_{fe} V_{q_{2}}$ if and only if $V_{q'_{1}} = V_{q'_{2}}$. Namely, we have $V_{q_{1}}=_{fe} V_{q_{2}}$ if and only if $U_{X_{q_{1}}} \cong U_{X_{q_{2}}}$ as schemes.
\end{proposition}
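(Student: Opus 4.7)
\textbf{Proof plan for Proposition \ref{pro-5}.}

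\medskip

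The plan is to factor $\omega$ and reduce the two claimed equivalences to a clean analysis of $\omega$ on closed points. Specifically, I would write $\omega$ as the composition
\[
M_{g,n}^{\rm ord} \xrightarrow{\;\nu\;} M_{g,n} \xrightarrow{\;\pi_{\rm base}\;} M_{g,n,\mbF_{p}},
\]
where $\nu$ is the finite surjective quotient by $S_{n}$ and $\pi_{\rm base}$ is the projection coming from the base change $\overline{\mbF}_{p}/\mbF_{p}$. The map $\nu$ is finite, and $\pi_{\rm base}$ is integral (hence universally closed), so $\omega$ is a closed surjection. In particular $\omega(V_{q})$ is an irreducible closed subset containing $\omega(q)$, so $\omega(V_{q_{i}})=V_{q_{i}'}$.

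\medskip

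The key step is the fiber analysis over a closed point $c'\in M_{g,n,\mbF_{p}}$ with residue field $\mbF_{p^{r}}$. Here $\pi_{\rm base}^{-1}(c')=\spec(\mbF_{p^{r}}\otimes_{\mbF_{p}}\overline{\mbF}_{p})$ is a finite set of $r$ closed points of $M_{g,n}$ that are transitively permuted by $\mathrm{Gal}(\overline{\mbF}_{p}/\mbF_{p})$ (i.e.\ by Frobenius twisting of the classified curve), and $\nu^{-1}$ of each of these is a finite $S_{n}$-orbit of closed points of $M_{g,n}^{\rm ord}$. Thus $\omega^{-1}(c')$ is a finite set of closed points, so $\omega$ preserves closed points, and two closed points $c_{1},c_{2}\in M_{g,n}^{\rm ord, cl}$ satisfy $\omega(c_{1})=\omega(c_{2})$ if and only if $c_{2}$ is obtained from $c_{1}$ by an $m$-th Frobenius twist followed by an $S_{n}$-reordering, i.e.\ $c_{1}\sim_{fe} c_{2}$ in the sense of Definition \ref{def-2}(1).

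\medskip

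With this in hand, the main equivalence follows from density of closed points. Both $M_{g,n}^{\rm ord}$ and $M_{g,n,\mbF_{p}}$ are Jacobson (being of finite type over a field), so closed points are dense in every closed irreducible subset; combined with the fiber analysis this gives $\omega(V_{q_{i}}^{\rm cl})=V_{q_{i}'}^{\rm cl}$ and $V_{q_{i}'}=\overline{V_{q_{i}'}^{\rm cl}}$. For the forward direction, assume $V_{q_{1}}\supseteq_{fe} V_{q_{2}}$ and pick $c_{2}'\in V_{q_{2}'}^{\rm cl}$; lift it to a closed $c_{2}\in V_{q_{2}}^{\rm cl}$ (possible by the fiber analysis), produce $c_{1}\in V_{q_{1}}^{\rm cl}$ with $c_{1}\sim_{fe} c_{2}$ by hypothesis, and conclude $c_{2}'=\omega(c_{1})\in V_{q_{1}'}$; density yields $V_{q_{2}'}\subseteq V_{q_{1}'}$. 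The reverse direction is the same argument run in the other order: every closed $c_{2}\in V_{q_{2}}^{\rm cl}$ has $\omega(c_{2})\in V_{q_{2}'}^{\rm cl}\subseteq V_{q_{1}'}$, which by the fiber analysis admits a closed preimage in $V_{q_{1}}$.

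\medskip

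Finally, the ``in particular'' statement is immediate: $V_{q_{1}}=_{fe} V_{q_{2}}$ is equivalent to $V_{q_{1}'}=V_{q_{2}'}$, and since irreducible closed subsets of a scheme have unique generic points this is equivalent to $q_{1}'=q_{2}'$ in $M_{g,n,\mbF_{p}}$, i.e.\ the two pointed stable curves agree up to $S_{n}$-reordering and Frobenius twist; this is exactly the condition that $U_{X_{q_{1}}}\cong U_{X_{q_{2}}}$ as abstract schemes, since in the hyperbolic range $2g+n-2>0$ the smooth compactification and its marked points are canonically recovered from $U_{X_{q_{i}}}$. The only real obstacle I anticipate is being careful about the identification $\omega(c_{1})=\omega(c_{2})\Leftrightarrow c_{1}\sim_{fe} c_{2}$, particularly matching the Galois-orbit picture of $\pi_{\rm base}$ with the explicit Frobenius-twist notation $c_{1}^{(m)}$ used in Definition \ref{def-2}; everything else is soft topology and Jacobson density.
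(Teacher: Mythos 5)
Your proposal is correct and follows essentially the same strategy as the paper's proof: first establish the closed-point equivalence $\omega(c_1)=\omega(c_2)\iff c_1\sim_{fe}c_2$, then pass to arbitrary points via irreducibility/density. Where the paper is terse (it dispatches the closed-point case with ``we see immediately'' and handles the reverse implication in the general case by observing that $V_{q_i}$ is an irreducible component of $\omega^{-1}(V_{q_i'})$), you fill in the same gap explicitly by factoring $\omega$ through the $S_n$-quotient and the base change, checking it is closed with finite fibers over closed points, and invoking Jacobson density, which is a faithful expansion rather than a different argument.
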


\begin{proof}
Suppose that $q_{i}$, $i\in \{1, 2\}$, is a closed point of $M_{g, n}^{\rm ord}$. If $V_{q_{1}} \supseteq_{fe} V_{q_{2}}$, we see immeidately  $q_{1} \sim q_{2}$. Thus, we obtain $U_{X_{q_{1}}} \cong U_{X_{q_{2}}}$ as schemes. This means $q'_{1}=q'_{2}$. Conversely, if $V_{q'_{1}} \supseteq V_{q'_{2}}$, then we have $q'_{1}=q'_{2}$. Thus, we obtain $q_{1}\sim q_{2}$.

Suppose that $q_{i}$, $i\in \{1, 2\}$, is an aribtrary point of $M_{g, n}^{\rm ord}$. If $V_{q_{1}} \supseteq_{fe} V_{q_{2}}$, then the case of closed points implies  $V_{q'_{1}}^{\rm cl} \supseteq V_{q'_{2}}^{\rm cl}.$ Since $V_{q'_{1}}$ and $V_{q'_{2}}$ are irreducible, we obtain  $V_{q'_{1}} \supseteq V_{q'_{2}}.$ Conversely, if $V_{q'_{1}} \supseteq V_{q'_{2}}$, we note that $V_{q_{i}}$ is an irreducible component of $(\omega)^{-1}(V_{q'_{i}})$. Then the case of closed points implies  $V_{q_{1}} \supseteq_{fe} V_{q_{2}}$.
\end{proof}

\subsubsection{}
Denote by ${\rm Hom}^{\rm op}_{\rm pg}(-,-)$ the set of open continuous homomorphisms of profinite groups, and by $\text{Isom}_{\rm pg}(-,-)$ the set of isomorphisms of profinite groups. We have the following conjecture.

\begin{weakhomGC}
Let $q_{i} \in M_{g, n}$ (resp. $q_{i} \in M_{g, n,  \mbF_{p}}$), $i\in \{1, 2\}$, be an arbitrary point. Then
we have $${\rm Hom}^{\rm op}_{\rm pg}(\pi_{1}^{\rm t}(q_{1}), \pi_{1}^{\rm t}(q_{2}))$$ is non-empty if and only if $V_{q_{1}}\supseteq_{fe}V_{q_{2}}$ (resp. $V_{q_{1}}\supseteq V_{q_{2}}$).
\end{weakhomGC}
\noindent
The weak Hom-version conjecture means that the {\it topological structures} of the moduli spaces of smooth pointed stable curves can be understood by the tame fundamental groups of curves. In particular, the weak Hom-version conjecture implies the following conjecture which was essentially formulated by Tamagawa (\cite{T3}).



\begin{weakisomGC}
Let $q_{i} \in M_{g, n}$ (resp. $q_{i} \in M_{g, n,  \mbF_{p}}$), $i\in \{1, 2\}$, be an arbitrary point.  Then we have $${\rm Isom}_{\rm pg}(\pi_{1}^{\rm t}(q_{1}), \pi_{1}^{\rm t}(q_{2}))$$ is non-empty if and only if $V_{q_{1}}=_{fe}V_{q_{2}}$ (resp. $V_{q_{1}}=V_{q_{2}}$).
\end{weakisomGC}
\noindent
The weak Isom-version conjecture means that the {\it set structures} of the moduli spaces of smooth pointed stable curves can be understood by the tame fundamental groups of curves.

\subsection{The pointed collection conjecture}\label{pcc} In this subsection, we formulate the second conjecture of the present paper which we call ``the pointed collection conjecture''.
 We maintain the notation introduced in \ref{moduli212}.

\subsubsection{}
Let $q$ be an arbitrary point of $M_{g, n}^{\rm ord}$ and $G \in \pi^{\rm t}_{A}(q)$ an arbitrary finite group. We put $$U_{G}:= \{q' \in M_{g, n}^{\rm ord} \ | \ G \in \pi_{A}^{\rm t}(q')\} \subseteq M_{g, n}^{\rm ord}.$$ Then we have the following result.

\begin{proposition}\label{pro-6}
Let $q$ be an arbitrary point of $M_{g, n}^{\rm ord}$ and $G \in \pi^{\rm t}_{A}(q)$ an arbitrary finite group. Then the set $U_{G}$ contains an open neighborhood of $q$ in $M_{g, n}^{\rm ord}$.
\end{proposition}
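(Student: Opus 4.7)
The plan is to translate the group-theoretic condition $G \in \pi_A^{\rm t}(q)$ into a geometric object and then spread that object out over a neighborhood of $q$. By the tame Galois correspondence, $G \in \pi_A^{\rm t}(q)$ is equivalent to the existence of a connected Galois tame covering $f : (\widetilde{Y}, D_{\widetilde{Y}}) \to (X_{k_q}, D_{X_{k_q}})$ with Galois group $G$ --- \'etale over $U_{X_{k_q}}$, tamely ramified along $D_{X_{k_q}}$, and with geometrically connected source. The whole argument amounts to showing that this cover lives not just over $q$ but over an open neighborhood of $q$ in the moduli space.

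To carry this out, I would choose a smooth morphism $S \to \mcM_{g,n}^{\rm ord}$ from a smooth scheme $S$ (for instance coming from a rigidification by a suitable level structure) and pick a point $s_0 \in S$ above $q$. Let $(\mcX, \mcD) \to S$ be the pulled-back universal family; replacing $k_q$ by an algebraic closure of $k(s_0)$ (which does not affect $\pi_1^{\rm t}(q)$), the geometric fiber at $s_0$ recovers $(X_{k_q}, D_{X_{k_q}})$. By the standard limit and descent arguments of EGA IV \S8, the cover $f$ descends to a $G$-Galois cover defined over a finite extension of $k(s_0)$, and then spreads out to a $G$-Galois cover $\widetilde{\mcY} \to \mcX_{S'}$ over an open neighborhood $W$ of a point $s_0' \in S'$ above $s_0$, where $S' \to S$ is \'etale. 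The loci in $W$ over which this family is \'etale over $\mcX_{S'}\setminus\mcD_{S'}$, tamely ramified along $\mcD_{S'}$, and has geometrically connected fibers are each open (tameness because ramification indices are locally constant in flat finite covers and hence remain coprime to $p$; connectedness by upper semicontinuity of the number of geometric connected components of fibers). Intersecting these open conditions and shrinking $W$ accordingly, every fiber over $W$ yields a connected tame Galois cover with group $G$, so $G \in \pi_A^{\rm t}(w)$ for every image point $w$ of $W$ in $M_{g,n}^{\rm ord}$. Since $S' \to S \to M_{g,n}^{\rm ord}$ is an open map (\'etale and smooth morphisms are open, and the quotient to the coarse moduli space is open), the image contains an open neighborhood of $q$ contained in $U_G$.

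The principal technical obstacle is the spreading-out step, in particular the claim that tame ramification along the horizontal divisor $\mcD_{S'}$ is preserved on an open subset of the base. This reduces to the local constancy of ramification indices in flat finite covers over regular bases, so ``coprime to $p$'' is a Zariski-open condition; but one has to set things up carefully so that the relative tameness witnessed at $s_0'$ actually extends across the neighborhood. The remaining verifications --- that geometric connectedness of fibers is open, that being $G$-Galois for a fixed finite $G$ is open, and that the image in $M_{g,n}^{\rm ord}$ is open --- are standard.
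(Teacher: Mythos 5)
Your proof follows the same spreading-out strategy as the reference cited in the paper (Stevenson, Proposition 4.2); the paper's own proof consists of that citation together with the assertion that the method extends from the unramified case ($n=0$, $q$ closed) to the tame case with $n \geq 0$, so you are supplying the details the paper leaves implicit. The overall plan --- translate $G \in \pi_A^{\rm t}(q)$ into a connected tame $G$-Galois cover, spread it out over an \'etale chart, and shrink so that the defining conditions persist --- is sound, and the treatments of the \'etale, Galois, and connectedness conditions are fine.

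One step needs repair, and it is precisely the one you single out as the principal obstacle. The claim that ``ramification indices are locally constant in flat finite covers'' is false: in a flat finite family of covers of smooth curves, distinct ramification points can collide under specialization, so the individual indices jump. The tame locus is nevertheless open, but for a different reason. After shrinking $W$ so that $\widetilde{\mcY} \to W$ is a smooth proper family of curves, the inertia subgroups $I_{\tilde y} \subseteq G$ at points $\tilde y$ lying over $\mcD_{S'}$ can only shrink under generization, so the condition that every $\#I_{\tilde y}$ be prime to $p$ is stable under generization; combined with the constructibility of the stratification of $W$ by the conjugacy classes of inertia subgroups, this makes the tame locus Zariski-open. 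Alternatively, and closer to the spirit of Stevenson's argument, one can observe that the stack of $n$-pointed genus-$g$ curves equipped with a tame $G$-Galois cover of prescribed ramification type is formally \'etale over $\mcM_{g,n}^{\rm ord}$ (tame ramification deforms uniquely, by Abhyankar's lemma and the infinitesimal criterion), so its image in $M_{g,n}^{\rm ord}$ is open. With the local-constancy claim replaced by either of these, your argument is complete.
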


\begin{proof}
Proposition \ref{pro-6} was proved by K. Stevenson when $n=0$ and $q$ is a closed point of $M_{g, 0}$ (cf. \cite[Proposition 4.2]{Ste}). Moreover, by similar arguments to the arguments given in the proof of \cite[Proposition 4.2]{Ste}, Proposition \ref{pro-6} also holds for $n\geq 0$.
\end{proof}


\begin{definition}\label{def-3} We denote by $q_{\rm gen}$ the generic point of $M_{g, n}^{\rm ord}$, and let $$\mcC \subseteq \pi_{A}^{\rm t}(q_{\rm gen})=\bigcup_{q \in M_{g, n}^{\rm ord, cl}}\pi_{A}^{\rm t}(q)$$ be a subset of $\pi_{A}^{\rm t}(q_{\rm gen})$. We shall say that $\mcC$ is a {\it pointed collection} if the following conditions are satisfied: (i) $0<\#((\bigcap_{G\in \mcC}U_{G}) \cap M_{g, n}^{\rm ord, cl}) < \infty$; (ii) $U_{G'} \cap (\bigcap_{G\in \mcC}U_{G})\cap M_{g, n}^{\rm ord, cl} =\emptyset$ for each $G' \in \pi_{A}^{\rm t}(q_{\rm gen})$ such that $G' \not\in \mcC$.

On the other hand, for each closed point $t \in M_{g, n}^{\rm ord, cl}$, we may define a set associated to $t$ as follows: $$\mcC_{t}:=\{G \in \pi_{A}^{\rm t}(q_{\rm gen})\ | \ t\in U_{G}\}.$$ Note that, if $t \in V^{\rm cl}_{q}$, then $\mcC_{t} \subseteq \pi_{A}^{\rm t}(q)$.  Moreover, we denote by $$\msC_{q}:= \{\mcC \ \text{is a pointed collection} \ | \ \mcC \subseteq \pi^{\rm t}_{A}(q)\}.$$ 
\end{definition}

\subsubsection{}
At present, no published results are known concerning the weak Hom-version conjecture (or the weak Isom-version conjecture) for {\it non-closed} points. The main difficulty of proving the weak Hom-version conjecture (or the weak Isom-version conjecture) for non-closed points of $M_{g, n}^{\rm ord}$ is the following: For each $q \in M_{g, n}^{\rm ord}$, we {\it do not} know how to reconstruct the tame fundamental groups of closed points of $V_{q}$ group-theoretically from $\pi_{1}^{\rm t}(q)$.

Once the tame fundamental groups of the closed points of $V_q$ can be reconstructed group-theoretically from $\pi_{1}^{\rm t}(q)$, then the weak Hom-version conjecture for closed points of $M_{g, n}^{\rm ord}$ implies that the set of closed points of $V_{q}$ can be reconstructed group-theoretically from $\pi_{1}^{\rm t}(q)$. Thus, the weak Hom-version conjecture for non-closed points of $M_{g, n}^{\rm ord}$ can be deduced from the weak Hom-version conjecture for closed points of $M_{g, n}^{\rm ord}$.

Let $q \in M_{g, n}^{\rm ord}$. Since the isomorphism class of $\pi_{1}^{\rm t}(q)$ as a profinite group can be determined by the set $\pi_{A}^{\rm t}(q)$, the following conjecture tell us how to reconstruct group-theoretically the set of finite quotients of a closed point of $V_{q}$ from $\pi_{A}^{\rm t}(q)$ (or $\pi_{1}^{\rm t}(q)$).

\begin{pcc}
For each $t \in M_{g, n}^{\rm ord, cl}$, the set $\mcC_{t}$ associated to $t$ is a pointed collection. Moreover, let $q\in M_{g, n}^{\rm ord}$. Then the natural map $${\rm colle}_{q}: \msV_{q}^{\rm cl} \rightarrow \msC_{q}, \ [t] \mapsto \mcC_{t},$$ is a bijection, where $[t]$ denotes the image of $t$ in $\msV_{q}^{\rm cl}:= V_{q}^{\rm cl}/\sim_{fe}$.
\end{pcc}
\noindent
Write $q'\in M_{g, n, \mbF_{p}}$ for the image $\omega(q) $. Then we have $\msV^{\rm cl}_{q}=V_{q'}^{\rm cl}$. This means that the pointed collection conjecture holds if and only if the weak Hom-version conjecture holds.

\section{Reconstructions of marked points}\label{mpanabelian}
The main purposes of the present section are as follows: We will give a new mono-anabelian reconstruction of ${\rm Ine}(\pi_{1}^{\rm t}(U_{X}))$, and prove that the mono-anabelian reconstruction (i.e., the group-theoretical algorithm) {\it is compatible with any open continuous homomorphisms} of tame fundamental groups of smooth pointed stable curves with a fixed type.

\subsection{Anabelian reconstructions}

 We maintain the notation introduced in \ref{curves}.

\subsubsection{}
Let us recall the definitions concerning  ``anabelian reconstructions".

\begin{definition}\label{definition 1}
Let $\mcF$ be a geometric object and $\Pi_{\mcF}$ a profinite group associated to the  object $\mcF$. Suppose that we are given an invariant $\text{Inv}_{\mcF
}$ depending on the isomorphism class of $\mcF$ (in a certain category), and that we are given an additional structure $\text{Add}_{\mcF}$ (e.g. a family of subgroups, a family of quotient groups) on the profinite group $\Pi_{\mcF}$ depending functorially on $\mcF$.

We shall say that $\text{Inv}_{\mcF}$ can be {\it mono-anabelian reconstructed from} $\Pi_{\mcF}$ if there exists a group-theoretical algorithm whose input datum is $\Pi_{\mcF}$, and whose output datum is $\text{Inv}_{\mcF}$. We shall say that $\text{Add}_{\mcF}$  {\it can be mono-anabelian reconstructed from} $\Pi_{\mcF}$ if there exists a group-theoretical algorithm whose input datum is $\Pi_{\mcF}$, and whose output datum is $\text{Add}_{\mcF}$.

Let $\mcF_{i},\ i \in \{1, 2\},$ be a geometric object
and $\Pi_{\mcF_{i}}$ a profinite group associated to $\mcF_{i}$. Suppose that we are given an additional structure $\text{Add}_{\mcF_{i}}$ on the profinite group $\Pi_{\mcF_{i}}$ depending functorially on $\mcF_{i}$. We shall say that a map (or a morphism) $\text{Add}_{\mcF_{1}} \rightarrow \text{Add}_{\mcF_{2}}$  {\it can be mono-anabelian reconstructed} from an open continuous homomorphism $\Pi_{\mcF_{1}} \rightarrow \Pi_{\mcF_{2}}$ if there exists a group-theoretical algorithm whose input datum is $\Pi_{\mcF_{1}} \rightarrow \Pi_{\mcF_{2}}$, and whose output datum is $\text{Add}_{\mcF_{1}} \rightarrow \text{Add}_{\mcF_{2}}$.

\end{definition}

\subsubsection{}\label{unicov313}
Let $K$ be the function field of $X$, and let $\widetilde K$ be the maximal Galois extension of $K$ in a fixed separable closure of $K$, unramified over $U_{X}$ and at most tamely ramified over $D_{X}$. Then we may identify $\pi_{1}^{\rm t}(U_{X})$ with $\text{Gal}(\widetilde K/K)$. We define the universal tame covering of $(X, D_{X})$ associated to $\pi_{1}^{\rm t}(U_{X})$ to be $(\widetilde X, D_{\widetilde X})$, where  $\widetilde X$ denotes the normalization of $X$ in $\widetilde K$, and $D_{\widetilde X}$ denotes the inverse image of $D_{X}$ in $\widetilde X$. Then there is a natural action of  $\pi_{1}^{\rm t}(U_{X})$ on $(\widetilde X, D_{\widetilde X})$. For each $\widetilde e \in D_{\widetilde X}$, we denote by $I_{\widetilde e}$ the inertia subgroup of $\pi_{1}^{\rm t}(U_{X})$ associated to $\widetilde e$ (i.e., the stabilizer of $\widetilde e$ in $\pi_{1}^{\rm t}(U_{X})$).  Then we have $I_{\widetilde e} \cong \widehat \mbZ(1)^{p'}$, where $\widehat \mbZ(1)^{p'}$ denotes the prime-to-$p$ part of $\widehat \mbZ(1)$. The following result was proved by Tamagawa (\cite[Lemma 5.1 and Theorem 5.2]{T4}).

\begin{proposition}\ \label{proposition 1}
\begin{enumerate}
  \item The type $(g_{X}, n_{X})$ can be mono-anabelian reconstructed from $\pi_{1}^{\rm t}(U_{X})$.
  \item Let $\widetilde e$ and $\widetilde e'$ be two points of $D_{\widetilde X}$ distinct from each other. Then the intersection of $I_{\widetilde e}$ and $I_{\widetilde e'}$ is trivial in $\pi_{1}^{\rm t}(U_{X})$. Moreover, the map $$D_{\widetilde X} \rightarrow {\rm Sub}(\pi_{1}^{\rm t}(U_{X})), \ \widetilde e \mapsto I_{\widetilde e}, $$ is an injection, where ${\rm Sub}(-)$ denotes the set of closed subgroups of $(-)$.
  \item Write ${\rm Ine}(\pi_{1}^{\rm t}(U_{X}))$ for the set of inertia subgroups in $\pi_{1}^{\rm t}(U_{X})$, namely the image of the map $D_{\widetilde X} \rightarrow {\rm Sub}(\pi_{1}^{\rm t}(U_{X}))$. Then ${\rm Ine}(\pi_{1}^{\rm t}(U_{X}))$ can be mono-anabelian reconstructed from $\pi_{1}^{\rm t}(U_{X})$. In particular, the set of marked points $D_{X}$ and $\pi_{1}(X)$ can be mono-anabelian reconstructed from $\pi_{1}^{\rm t}(U_{X})$.
\end{enumerate}

\end{proposition}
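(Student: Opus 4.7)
The plan is to establish the three assertions in sequence, with (3) being the decisive step.

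For (1), the first task is to identify the characteristic $p$ purely group-theoretically. Comparing $\dim_{\mbF_{\ell}} H^{1}(H, \mbF_{\ell})$ across open subgroups $H \leq \pi_{1}^{\rm t}(U_{X})$ and across primes $\ell$, the Riemann-Hurwitz formula for tame covers controls the jump in these ranks for every $\ell \neq p$, whereas the $p$-rank is controlled by the strictly weaker Deuring-Shafarevich formula; thus $p$ is detectable as the unique prime whose abelian ranks systematically fail the Riemann-Hurwitz prediction. With $p$ in hand, the rank of the maximal pro-prime-to-$p$ abelian quotient equals $2g_{X}+n_{X}-1$ if $n_{X} \geq 1$ and $2g_{X}$ if $n_{X}=0$; the two cases are separated by testing whether the pro-prime-to-$p$ completion satisfies two-dimensional Poincar\'e duality, which holds only in the proper case. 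This pins down $(g_{X}, n_{X})$.

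For (2), I would first show that no nontrivial $g \in I_{\widetilde e} \cap I_{\widetilde e'}$ with $\widetilde e \neq \widetilde e'$ can exist. In any finite tame Galois quotient $Y \to X$ where the images of $\widetilde e, \widetilde e'$ are already distinct points $y, y' \in Y$, the image $\overline g$ fixes both $y$ and $y'$; since a nontrivial element of a tame Galois group can only fix points lying over the branch locus, $y, y'$ lie over $D_{X}$. Applying Riemann-Hurwitz to the cyclic quotient $Y \to Y/\langle \overline g \rangle$, and refining the cover so that the order of $\overline g$ grows without bound (which is possible because $I_{\widetilde e} \cong \widehat{\mbZ}^{(p')}$), the resulting numerical constraints become incompatible with the hyperbolicity $2g_{X}+n_{X}-2>0$ propagated to the cover. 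The injectivity of $\widetilde e \mapsto I_{\widetilde e}$ then follows immediately, since each $I_{\widetilde e}$ is nontrivial, so $I_{\widetilde e}=I_{\widetilde e'}$ with $\widetilde e \neq \widetilde e'$ would produce a nontrivial element in the intersection.

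For (3), the core task is to give a group-theoretic characterization of those closed subgroups of $\pi_{1}^{\rm t}(U_{X})$ that arise as $I_{\widetilde e}$. Every inertia subgroup is isomorphic to $\widehat{\mbZ}^{(p')}$, but this alone is far from sufficient. The idea is to characterize $I$ as a procyclic closed subgroup such that, for every open subgroup $H$ of $\pi_{1}^{\rm t}(U_{X})$ containing $I$, the type $(g_{H}, n_{H})$ reconstructed via (1) matches the value predicted by a Riemann-Hurwitz computation in which $I$ contributes as a ramification datum of the appropriate index on the cover corresponding to $H$. Once ${\rm Ine}(\pi_{1}^{\rm t}(U_{X}))$ is reconstructed, $D_{X}$ is obtained as the quotient of this set by the conjugation action of $\pi_{1}^{\rm t}(U_{X})$, and $\pi_{1}(X)$ is the quotient of $\pi_{1}^{\rm t}(U_{X})$ by the normal closure of the union of all inertia subgroups.

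The main obstacle I expect is the sufficiency direction of the characterization in (3): one must produce a purely group-theoretic predicate satisfied by exactly the $I_{\widetilde e}$'s and by no other procyclic prime-to-$p$ subgroups, which requires a delicate bookkeeping of how Riemann-Hurwitz and Deuring-Shafarevich constraints propagate through the lattice of open subgroups of $\pi_{1}^{\rm t}(U_{X})$. Once this characterization is in place, the remaining verifications are essentially routine consequences of (1) and (2).
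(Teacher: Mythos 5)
The paper does not prove this proposition: it is quoted verbatim from Tamagawa, with the citation \cite[Lemma 5.1 and Theorem 5.2]{T4} given immediately before the statement, so there is no in-paper argument to compare against. Your sketch is therefore an attempt at a direct proof, and as such it has two genuine gaps.

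In part (1), after isolating $p$ you claim that the prime-to-$p$ abelian rank $2g_{X}+n_{X}-1$ (or $2g_X$), together with a Poincar\'e-duality test separating the proper from the affine case, pins down $(g_{X}, n_{X})$. It does not: for $n_X \geq 1$ the quantity $2g_X + n_X - 1$ is a single number, and the pro-$p'$ completion of $\pi_1^{\rm t}(U_X)$ is a free pro-$p'$ group of that rank regardless of how $g_X$ and $n_X$ split it, so nothing prime-to-$p$ can separate, say, $(g_X,n_X)=(1,3)$ from $(2,1)$. Tamagawa's actual reconstruction of the type needs a $p$-primary input to decouple $g_X$ from $n_X$ --- namely the limit of $p$-averages ${\rm Avr}_p$ from \cite[Theorem 0.5]{T4}, which the present paper records as Proposition 3.12(1) and which satisfies ${\rm Avr}_p(\pi_1^{\rm t}(U_X))=g_X$ when $n_X>1$. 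That ingredient, a nontrivial theorem in its own right, is entirely absent from your outline and is exactly what your argument is missing.

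In part (3), what you write is a restatement of the problem rather than a solution: you propose to characterize $I_{\widetilde e}$ by the condition that the types of all open subgroups containing it ``match a Riemann--Hurwitz prediction,'' and then you yourself flag the sufficiency of this predicate as ``the main obstacle.'' That obstacle is the entire content of \cite[Theorem 5.2]{T4}, which is a long and delicate argument built on generalized Hasse--Witt invariants; it cannot be dismissed as routine bookkeeping. It is also worth noting that the present paper, in \S\ref{sec-3}, develops a \emph{different} group-theoretic reconstruction of ${\rm Ine}(\pi_{1}^{\rm t}(U_{X}))$ via ``mp-triples'' (Propositions \ref{pro-2}, \ref{pro-3}, and \ref{them-1}), precisely because Tamagawa's characterization is not visibly compatible with open surjections and the authors need a Hom-functorial construction; that alternative route is likewise quite different from the numerical predicate you suggest. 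Part (2) of your sketch is in roughly the right spirit, though the Riemann--Hurwitz contradiction you invoke still needs to be executed carefully.
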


\subsection{The set of marked points}\label{sec-2}
We maintain the notation introduced in \ref{curves}. Moreover, we suppose that $g_{X} \geq 2$ and $n_{X}>0$.

\subsubsection{}We will prove that the set of marked points can be regarded as a quotient set of a set of cohomological classes of a suitable covering of curves (i.e. Proposition \ref{pro-2}).
The main idea is the following: By taking a suitable \'etale covering with a prime degree $f: (Y, D_{Y}) \rightarrow (X, D_{X})$, for every marked point $x \in D_{X}$, there exists a set of tame coverings with a prime degree which is totally ramified over the inverse image $f^{-1}(x)$. Then $x$ can be regarded as the set of cohomological classes corresponding to such coverings.

\subsubsection{}\label{triple}
Let $h: (W, D_{W}) \rightarrow (X, D_{X})$ be a connected Galois tame covering over $k$. We put $$\text{Ram}_{h} := \{e \in D_{X} \ | \ h \ \text{is ramified over} \ e \}.$$ Let $(Y, D_{Y})$ be a smooth pointed stable curve over $k$. We shall say that $$(\ell, d, f: (Y, D_{Y}) \rightarrow (X, D_{X}))$$ is {\it an mp-triple associated to $(X, D_{X})$} if the following conditions hold: (i) $\ell$ and $d$ are prime numbers distinct from each other such that $(\ell, p)=(d, p)=1$ and $\ell \equiv 1 \ (\text{mod}\ d)$; then all $d$th roots of unity are contained in $\mbF_{\ell}$; (ii) $f$ is a Galois {\it\'etale} covering over $k$ whose Galois group is isomorphic to $\mu_{d}$, where $\mu_{d} \subseteq \mbF_{\ell}^{\times}$ denotes the subgroup of $d$th roots of unity. Here, ``{mp}'' means ``marked points''.

Then we have a natural injection $H^{1}_{\text{\'et}}(Y, \mbF_{\ell})\hookrightarrow H^{1}_{\text{\'et}}(U_Y, \mbF_{\ell})$ induced by the natural surjection $\pi_{1}^{\rm t}(U_Y) \twoheadrightarrow \pi_{1}(Y)$. Note that every non-zero element of $H^{1}_{\text{\'et}}(U_Y, \mbF_{\ell})$ induces a connected Galois tame covering of $(Y, D_{Y})$ of degree $\ell$. We obtain an exact sequence $$0\rightarrow H^{1}_{\text{\'et}}(Y, \mbF_{\ell})\rightarrow H^{1}_{\text{\'et}}(U_Y, \mbF_{\ell}) \rightarrow \text{Div}^{0}_{D_{Y}}(Y)\otimes \mbF_{\ell}\rightarrow 0$$ with a natural action of $\mu_{d}$.

\subsubsection{}\label{sec31aaa}
Let $(\text{Div}^{0}_{D_{Y}}(Y)\otimes \mbF_{\ell})_{\mu_{d}} \subseteq \text{Div}^{0}_{D_{Y}}(Y)\otimes \mbF_{\ell}$ be the subset of elements on which $\mu_{d}$ acts via the character $\mu_{d} \hookrightarrow \mbF_{\ell}^{\times}$ and $M^{*}_{Y} \subseteq H^{1}_{\text{\'et}}(U_Y, \mbF_{\ell})$ the subset of elements whose images are non-zero elements of $(\text{Div}^{0}_{D_{Y}}(Y)\otimes \mbF_{\ell})_{\mu_{d}}$. For each $\alpha \in M^{*}_{Y}$, write $g_{\alpha}: (Y_{\alpha}, D_{Y_{\alpha}}) \rightarrow(Y, D_{Y})$ for the tame covering induced by $\alpha$. We define $\epsilon: M_{Y}^{*} \rightarrow \mbZ$, where $ \epsilon(\alpha):=\#D_{Y_{\alpha}}$. Denote by $$M_{Y}:=\{\alpha \in M_{Y}^{*}\ | \ \#\text{Ram}_{g_{\alpha}}=d\}=\{\alpha \in M_{Y}^{*}\ | \ \epsilon(\alpha)=\ell(dn_{X}-d)+d\}.$$ Note that $M_{Y}$ is non-empty.

For each $\alpha \in M_{Y}$, since the image of $\alpha$ is contained in $(\text{Div}^{0}_{D_{Y}}(Y)\otimes \mbF_{\ell})_{\mu_{d}}$, we obtain that the action of $\mu_{d}$ on $\text{Ram}_{g_{\alpha}} \subseteq D_{Y}$ is transitive. Thus, there exists a unique marked point $e_{\alpha} \in D_{X}$ such that $f(y)=e_{\alpha}$ for each $y \in \text{Ram}_{g_{\alpha}}$.

For each $e \in D_{X}$, we put $$M_{Y, e}:= \{\alpha \in M_{Y} \ | \ g_{\alpha} \ \text{is ramified over} \ f^{-1}(e) \}.$$ Then, for any marked points $e, e' \in D_{X}$ distinct from each other, we have $M_{Y, e} \cap M_{Y, e'}=\emptyset$ and the disjoint union $$M_{Y}=\bigsqcup_{e \in D_{X}} M_{Y, e}.$$

\subsubsection{}\label{315}
Next, we define a pre-equivalence relation $\sim$ on $M_{Y}$ as follows:
Let $\alpha, \beta \in M_{Y}$. Then $\alpha \sim \beta$ if $\lambda\alpha+\mu\beta \in M_{Y}$ for each $\lambda, \mu \in \mbF^{\times}_{\ell}$ for which $\lambda\alpha+\mu\beta \in M_{Y}^{*}$. Then we have the following proposition.

\begin{proposition}\label{pro-2}
The pre-equivalence relation $\sim$ on $M_{Y}$ is an equivalence relation. Moreover, the map $$\vartheta_{X}: M_{Y}/\sim \rightarrow D_{X}, \ [\alpha] \mapsto e_{\alpha},$$ is a bijection, where $[\alpha]$ denotes the image of $\alpha$ in $M_{Y}/\sim$.
\end{proposition}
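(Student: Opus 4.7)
First I would pin down the geometric meaning of the image of $\alpha \in M_Y^*$ in $(\text{Div}^0_{D_Y}(Y) \otimes \mbF_\ell)_{\mu_d}$. This image is a non-zero divisor on which $\mu_d$ acts via the character $\chi : \mu_d \hookrightarrow \mbF_\ell^\times$, so every $\sigma \in \mu_d$ permutes its support while rescaling the divisor by $\chi(\sigma) \neq 0$; in particular, the support is $\mu_d$-stable. Because $f : (Y, D_Y) \to (X, D_X)$ is \'etale Galois with group $\mu_d$, the $\mu_d$-action on $D_Y$ is free and every orbit is a complete fiber $f^{-1}(e)$ of cardinality $d$. Hence $\text{Ram}_{g_\alpha}$, which coincides with the support of this image, is a disjoint union of fibers of $f|_{D_Y}$, and the defining condition $\#\text{Ram}_{g_\alpha} = d$ for membership in $M_Y$ becomes exactly the condition that this union consists of a single fiber, canonically producing the point $e_\alpha \in D_X$ appearing in the proposition.

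Next, I would establish the key equivalence $\alpha \sim \beta \iff e_\alpha = e_\beta$, from which all assertions of the proposition follow at once. For the ``if'' direction, assume $e_\alpha = e_\beta$: the images of $\alpha$ and $\beta$ are both supported on $f^{-1}(e_\alpha)$, so every $\lambda \alpha + \mu \beta \in M_Y^*$ has non-zero image supported in this single fiber, and the $\mu_d$-stability argument above forces its support to be all of $f^{-1}(e_\alpha)$, of cardinality $d$; hence $\lambda \alpha + \mu \beta \in M_Y$. For the ``only if'' direction I argue the contrapositive: if $e_\alpha \neq e_\beta$, the supports of the images of $\alpha$ and $\beta$ are the disjoint fibers $f^{-1}(e_\alpha)$ and $f^{-1}(e_\beta)$, so for every $\lambda, \mu \in \mbF_\ell^\times$ the image of $\lambda \alpha + \mu \beta$ is non-zero and supported on $f^{-1}(e_\alpha) \sqcup f^{-1}(e_\beta)$ of cardinality $2d$, placing $\lambda \alpha + \mu \beta$ in $M_Y^* \setminus M_Y$ and showing $\alpha \not\sim \beta$.

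Once this equivalence is in hand, reflexivity, symmetry, and transitivity of $\sim$ are inherited from equality in $D_X$, and $\vartheta_X$ is automatically well-defined and injective. For surjectivity, I would fix $e \in D_X$ and exhibit an element of $M_{Y, e}$. Since $(d, \ell) = 1$, Maschke's theorem applies to the $\mu_d$-equivariant short exact sequence recalled in \S\ref{triple}, so the sequence of $\chi$-eigenspaces remains short exact (using that $\mbF_\ell$ contains all $d$th roots of unity). Writing $f^{-1}(e) = \{\sigma^i(y_0)\}_{i=0}^{d-1}$ for a generator $\sigma$ of $\mu_d$, the divisor $D_e := \sum_{i=0}^{d-1} \chi(\sigma)^{-i} [\sigma^i(y_0)]$ visibly lies in the $\chi$-eigenspace, has full support $f^{-1}(e)$, and (because $\chi$ is a non-trivial character of a cyclic group) has coefficients summing to zero, so it defines a class in $(\text{Div}^0_{D_Y}(Y) \otimes \mbF_\ell)_{\mu_d}$; any lift of this class to $H^1_{\text{\'et}}(U_Y, \mbF_\ell)$ is the required element of $M_{Y, e}$. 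The step I expect to be most delicate is the initial orbit analysis, since everything downstream is essentially bookkeeping once one sees that $\mu_d$-semi-invariance forces the supports of these divisors to be unions of complete fibers of $f$.
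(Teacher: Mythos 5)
Your proof is correct and follows essentially the same route as the paper: both establish the key equivalence $\alpha \sim \beta \iff \text{Ram}_{g_\alpha} = \text{Ram}_{g_\beta}$ (equivalently $e_\alpha = e_\beta$) by observing that $\mu_d$-semi-invariance forces the ramification support of any element of $M_Y^*$ to be a union of complete $f$-fibers, and both prove surjectivity of $\vartheta_X$ by producing a class of $H^1_{\text{\'et}}(U_Y, \mbF_\ell)$ whose image is a prescribed $\chi$-eigendivisor supported on a single fiber. Your surjectivity step is merely more explicit than the paper's terse appeal to ``the structure of the maximal pro-$\ell$ tame fundamental groups''; note only that the Maschke digression is superfluous, since membership in $M_Y^*$ constrains just the \emph{image} of a class (not the class itself) to lie in the $\chi$-eigenspace, so plain surjectivity of $H^1_{\text{\'et}}(U_Y, \mbF_\ell) \rightarrow \text{Div}^0_{D_Y}(Y)\otimes\mbF_\ell$ already suffices to lift $D_e$.
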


\begin{proof}

Let $\beta, \gamma \in M_{Y}.$ If $\text{Ram}_{g_{\beta}}=\text{Ram}_{g_{\gamma}}$, then, for each $\lambda, \mu \in \mbF_{\ell}^{\times}$ for which $\lambda\beta+\mu\gamma \neq 0$, we have $\text{Ram}_{g_{\lambda\beta+\mu\gamma}}=\text{Ram}_{g_{\beta}}=\text{Ram}_{g_{\gamma}}.$ Thus we obtain that $\beta \sim \gamma$. On the other hand, if $\beta \sim \gamma$, we have $\text{Ram}_{g_{\beta}}=\text{Ram}_{g_{\gamma}}$. Otherwise, we have $\#\text{Ram}_{g_{\beta+\gamma}}=2d$. This means that $\beta \sim \gamma \ \text{if and only if} \ \text{Ram}_{g_{\beta}}=\text{Ram}_{g_{\gamma}}.$ Then $\sim$ is an equivalence relation on $M_{Y}$.

Let us prove that $\vartheta_{X}$ is a bijection. It is easy to see that $\vartheta_{X}$ is an injection. On the other hand, for each $e \in D_{X}$, the structure of the maximal pro-$\ell$ tame fundamental groups implies that we may construct a connected tame Galois covering of $h: (Z, D_{Z}) \rightarrow(Y, D_{Y})$ such that the element of $H^{1}_{\text{\'et}}(U_Y, \mbF_{\ell})$ induced by $h$ is contained in $M_{Y}$. Then $\vartheta_{X}$ is a surjection. This completes the proof of Proposition \ref{pro-2}.
\end{proof}

\begin{remark}\label{rem-2-1}
We claim that the set $M_{Y}/\sim$ does not depend on the choices of mp-triples associated to $(X, D_{X})$. Let $$(\ell^{*}, d^{*}, f^{*}: (Y^{*}, D_{Y^{*}}) \rightarrow(X, D_{X}))$$ be an arbitrary mp-triple associated to $(X, D_{X})$. Hence we obtain a resulting set $M_{Y^{*}}/\sim$ and a natural bijection $\vartheta_{X}^{*}: M_{Y^{*}}/\sim \rightarrow D_{X}.$ We will prove that there exists a natural bijection $\delta: M_{Y^{*}}/\sim \xrightarrow{\simeq} M_{Y}/\sim$ such that $\vartheta_{X}^{*}=\vartheta_{X}\circ\delta$.

First, suppose that $\ell \neq \ell^{*}$ and $d\neq d^{*}$. Then we may construct a natural bijection $\delta: M_{Y^{*}}/\sim \xrightarrow{\simeq} M_{Y}/\sim$ as follows. Let $\alpha\in M_{Y}$ and $\alpha^{*}\in M_{Y^{*}}$. Write $(Y_{\alpha}, D_{Y_{\alpha}}) \rightarrow (Y, D_{Y})$ and $(Y_{\alpha^{*}}, D_{Y_{\alpha^{*}}}) \rightarrow (Y^{*}, D_{Y^{*}})$ for the Galois tame coverings induced by $\alpha$ and $\alpha^{*}$, respectively. We consider the following fiber product in the category of smooth pointed stable curves $$(Y_{\alpha}, D_{Y_{\alpha}})\times_{(X, D_{X})} (Y_{\alpha^{*}}, D_{Y_{\alpha^{*}}})$$ which is a smooth pointed stable curve over $k$. Thus, we obtain a connected tame covering $(Y_{\alpha}, D_{Y_{\alpha}})\times_{(X, D_{X})} (Y_{\alpha^{*}}, D_{Y_{\alpha^{*}}}) \rightarrow(X, D_{X})$ of degree $dd^{*}\ell\ell^{*}$. Then it is easy to check that $\vartheta_{X}([\alpha])=\vartheta_{X}^{*}([\alpha^{*}])$ if and only if the cardinality of the set of marked points of $(Y_{\alpha}, D_{Y_{\alpha}})\times_{(X, D_{X})} (Y_{\alpha^{*}}, D_{Y_{\alpha^{*}}})$ is equal to $dd^{*}(\ell\ell^{*}(n_{X}-1)+1).$ We put $[\alpha] := \delta([\alpha^{*}])$ if $\vartheta_{X}([\alpha])=\vartheta_{X}^{*}([\alpha^{*}])$. Moreover, by the construction above, we obtain that $\vartheta_{X}^{*}=\vartheta_{X}\circ\delta$. In the general case, we may choose an mp-triple $$(\ell^{**}, d^{**}, f^{**}: (Y^{**}, D_{Y^{**}}) \rightarrow (X, D_{X}))$$ associated to $(X, D_{X})$ such that $\ell^{**} \neq \ell$, $\ell^{**} \neq \ell^{*}$, $d^{**}\neq d$, and $d^{**} \neq d^{*}$. Hence we obtain a resulting set $M_{Y^{**}}/\sim$ and a natural bijection $\vartheta_{X}^{**}: M_{Y^{**}}/\sim \rightarrow D_{X}$. Then the proof given above implies that there are natural bijections $\delta_{1}: M_{Y^{**}}/\sim \xrightarrow{\simeq} M_{Y}/\sim$ and $\delta_{2}: M_{Y^{**}}/\sim \xrightarrow{\simeq} M_{Y^{*}}/\sim$. Thus, we may put $$\delta:= \delta_{1} \circ\delta_{2}^{-1}: M_{Y^{*}}/\sim \xrightarrow{\simeq} M_{Y}/\sim.$$
\end{remark}

\begin{remark}\label{rem-2-2}
Let $H \subseteq \pi_{1}^{\rm t}(U_{X})$ be an arbitrary open normal subgroup and $f_{H}: (X_{H}, D_{X_{H}}) \rightarrow (X, D_{X})$ the Galois tame covering over $k$ induced by the natural inclusion $H \hookrightarrow \pi_{1}^{\rm t}(U_{X})$. Let $$(\ell, d, f: (Y, D_{Y}) \rightarrow (X, D_{X}))$$ be an mp-triple associated to $(X, D_{X})$ such that $(\#(\pi_{1}^{\rm t}(U_{X})/H), \ell)=(\#(\pi_{1}^{\rm t}(U_{X})/H), d)=1$. Then we obtain an mp-triple $$(\ell, d, g: (Z, D_{Z}):=(Y, D_{Y}) \times_{(X, D_{X})}
(X_{H}, D_{X_{H}})\rightarrow (X_{H}, D_{X_{H}}))$$ associated to $(X_{H}, D_{X_{H}})$ induced by $(\ell, d, f: (Y, D_{Y}) \rightarrow (X, D_{X}))$, where $(Y, D_{Y}) \times_{(X, D_{X})}
(X_{H}, D_{X_{H}})$ denotes the fiber product in the category of smooth pointed stable curves. The mp-triple associated to $(X_{H}, D_{X_{H}})$ induces a set $M_{Z}/\sim$ which can be identified with the set of marked points $D_{X_{H}}$ of $(X_{H}, D_{X_{H}})$ by applying Proposition \ref{pro-2}. Moreover, for each $e_{X} \in D_{X}$ and each $\alpha_{Y,e_{X}} \in M_{Y, e_{X}}$, $\alpha_{Y, e_{X}}$ induces an element $$\alpha_{Z}=\sum_{e_{X_{H}} \in f^{-1}_{H}(e_{X})} \alpha_{Z, e_{X_{H}}}$$ over $(Z, D_{Z})$ via the natural morphism $(Z, D_{Z}) \rightarrow (Y, D_{Y})$, where $\alpha_{Z, e_{X_{H}}} \in M_{Z, e_{X_{H}}}$. On the other hand, for each $e'_{X_{H}} \in D_{X_{H}}$ and each $e'_{X} \in D_{X}$, we have that $f_{H}(e'_{X_{H}})=e'_{X}$ if and only if there exists an element $\alpha_{Y, e'_{X}} \in M_{Y, e'_{X}}$ such that the following two conditions hold:
 \begin{itemize}
   \item the element $\alpha'_{Z}$, induced by $\alpha_{Y, e'_{X}}$ via the natural morphism $(Z, D_{Z}) \rightarrow (Y, D_{Y})$, can be represented by a linear combination $$\alpha_{Z}'=\sum_{e_{X_{H}} \in S_{X_{H}}} \alpha'_{Z, e_{X_{H}}},$$ where $S_{X_{H}}$ is a subset of $D_{X_{H}}$, and $\alpha_{Z, e_{X_{H}}} \in M_{Z, e_{X_{H}}}$;
   \item  $e'_{X_{H}} \in S_{X_{H}}$.
 \end{itemize}

\end{remark}

\begin{lemma}\label{lem-1}
Let $(\ell, d, f: (Y, D_{Y}) \rightarrow (X, D_{X}))$ be a triple associated to $(X, D_{X})$ and $g_{Y}$ the genus of $Y$. Then we have $\#(M_{Y, e})=\ell^{2g_{Y}+1}-\ell^{2g_{Y}}, \ e \in D_{X}.$  Moreover, we have $\#(M_{Y})=n_{X}(\ell^{2g_{Y}+1}-\ell^{2g_{Y}}).$
\end{lemma}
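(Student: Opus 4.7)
The plan is to count $M_{Y,e}$ by pushing the problem through the exact sequence
\[ 0 \to H^1_{\text{\'et}}(Y, \mbF_\ell) \to H^1_{\text{\'et}}(U_Y, \mbF_\ell) \to \mathrm{Div}^0_{D_Y}(Y)\otimes \mbF_\ell \to 0 \]
and analyzing the $\mu_d$-action carefully on the divisor term. First I would note that since $f\colon Y \to X$ is Galois \'etale with group $\mu_d$, the group $\mu_d$ acts freely on $D_Y$, so the $\mu_d$-orbits in $D_Y$ are exactly the fibers $f^{-1}(e)$ for $e \in D_X$, each of cardinality $d$. Since $\gcd(\ell,d)=1$, the group ring $\mbF_\ell[\mu_d]$ is semisimple, and decomposing $\mathrm{Div}^0_{D_Y}(Y)\otimes \mbF_\ell$ into isotypic components is straightforward.

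Next I would describe the eigenspace $(\mathrm{Div}^0_{D_Y}(Y)\otimes \mbF_\ell)_{\mu_d}$ explicitly: fixing an orbit $f^{-1}(e) = \{y_0, \zeta y_0, \ldots, \zeta^{d-1} y_0\}$ and a generator $\zeta$ of $\mu_d$, an eigenvector $\sum c_y y$ for the character $\chi\colon \mu_d \hookrightarrow \mbF_\ell^\times$ must satisfy $c_{\zeta \cdot y} = \chi(\zeta)^{-1} c_y$ on each orbit. Hence the restriction to one orbit is determined by a single scalar $c_0\in\mbF_\ell$, and on that orbit the coefficients are either all zero (if $c_0=0$) or all nonzero. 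Moreover the $\mathrm{Div}^0$ condition is automatic on each orbit since $\sum_{i=0}^{d-1}\chi(\zeta)^{-i}=0$ because $\chi$ is a nontrivial character of $\mu_d$. Therefore $(\mathrm{Div}^0_{D_Y}(Y)\otimes \mbF_\ell)_{\mu_d}$ is an $n_X$-dimensional $\mbF_\ell$-vector space with a canonical basis indexed by $D_X$, and an eigenvector has support exactly equal to the union of the orbits on which its coordinate is nonzero.

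Now I would translate this into a count of $M_{Y,e}$. By definition $\alpha\in M_Y$ lies in $M_{Y,e}$ iff $\mathrm{Ram}_{g_\alpha}$ equals $f^{-1}(e)$, which by the above amounts to saying that the image of $\alpha$ in $\mathrm{Div}^0_{D_Y}(Y)\otimes \mbF_\ell$ is a nonzero eigenvector supported precisely on the single orbit $f^{-1}(e)$. The set of such eigenvectors forms a $1$-dimensional subspace minus $0$, contributing $\ell-1$ elements. Pulling back through the surjection $H^1_{\text{\'et}}(U_Y, \mbF_\ell) \twoheadrightarrow \mathrm{Div}^0_{D_Y}(Y)\otimes \mbF_\ell$, whose kernel is $H^1_{\text{\'et}}(Y,\mbF_\ell) \cong \mbF_\ell^{2g_Y}$ (as $Y$ is a smooth projective curve over an algebraically closed field of characteristic $p\neq \ell$), each such eigenvector has $\ell^{2g_Y}$ preimages. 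This yields $\#M_{Y,e} = (\ell-1)\ell^{2g_Y} = \ell^{2g_Y+1} - \ell^{2g_Y}$, and summing over the disjoint decomposition $M_Y = \bigsqcup_{e\in D_X} M_{Y,e}$ already established in \ref{sec31aaa} gives $\#M_Y = n_X(\ell^{2g_Y+1} - \ell^{2g_Y})$.

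The only point requiring mild care is the passage between the abstract eigenspace condition and the concrete ramification-locus condition: I must verify that the map $H^1_{\text{\'et}}(U_Y,\mbF_\ell)\to \mathrm{Div}^0_{D_Y}(Y)\otimes \mbF_\ell$ really does read off the ramification divisor of the induced $\mu_\ell$-covering $g_\alpha$, so that $\#\mathrm{Ram}_{g_\alpha}$ coincides with the support size of the image; this is standard from the local description of tame cyclic covers but is the one place where a non-counting argument is needed.
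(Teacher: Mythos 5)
Your proof is correct and follows essentially the same route as the paper's: both count $M_{Y,e}$ by identifying it, via the exact sequence, with the preimage (minus the kernel) of the nonzero vectors in a one-dimensional piece of the $\chi$-eigenspace of $\mathrm{Div}^0_{D_Y}(Y)\otimes\mbF_\ell$, and both obtain $(\ell-1)\ell^{2g_Y}$ from the fact that the kernel $H^1_{\text{\'et}}(Y,\mbF_\ell)$ has $\mbF_\ell$-dimension $2g_Y$. The paper packages the one-dimensionality as a consequence of the transitivity of $\mu_d$ on $f^{-1}(e)$ (so that $H_e=L_e/H^1_{\text{\'et}}(Y,\mbF_\ell)$ is one-dimensional), whereas you spell out the eigenvector condition coordinate-by-coordinate, but this is the same computation.
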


\begin{proof}
Let $e \in D_{X}$. Write $D_{e} \subseteq D_{Y}$ for the set $f^{-1}(e)$. Then $M_{Y, e}$ can be naturally regarded as a subset of $H^{1}_{\text{\rm \'et}}(Y \setminus D_{e}, \mbF_{\ell} )$ via the natural open immersion $Y\setminus D_{e}  \hookrightarrow Y.$ Write $L_{e}$ for the $\mbF_{\ell}$-vector space generated by $M_{Y, e}$ in $H^{1}_{\text{\rm \'et}}(Y \setminus D_{e}, \mbF_{\ell})$. Then we have $M_{Y, e}=\ L_{e}\setminus H^{1}_{\text{\rm \'et}}(Y, \mbF_{\ell}).$ Write $H_{e}$ for the quotient $L_{e}/H^{1}_{\text{\rm \'et}}(Y, \mbF_{\ell})$. We have an exact sequence as follows:
$$0 \rightarrow H^{1}_{\text{\rm \'et}}(Y, \mbF_{\ell}) \rightarrow L_{e}\rightarrow H_{e}\rightarrow0.$$  Since the action of $\mu_{d}$ on $f^{-1}(e)$ is transitive, we obtain $\text{dim}_{\mbF_{\ell}}(H_{e})=1.$ On the other hand, since $\text{dim}_{\mbF_{\ell}}(H^{1}_{\text{\rm \'et}}(Y, \mbF_{\ell}))=2g_{Y},$ we obtain $\#(M_{Y, e})=\ell^{2g_{Y}+1}-\ell^{2g_{Y}}.$ Thus, we have $\#(M_{Y})=n_{X}(\ell^{2g_{Y}+1}-\ell^{2g_{Y}}).$ This completes the proof of the lemma.
\end{proof}


\bigskip

\subsection{Reconstructions of inertia subgroups}\label{sec-3}
 We maintain the notation introduced in \ref{curves}.

\subsubsection{}We will prove that the inertia subgroups of marked points can be mono-anabelian reconstructed from $\pi_{1}^{\rm t}(U_{X})$ (i.e. Proposition \ref{them-1}).
The main idea is as follows: Let $H \subseteq \pi_{1}^{\rm t}(U_{X})$ be an arbitrary normal open subgroup and $(X_{H}, D_{X_{H}}) \rightarrow(X, D_{X})$ the tame covering corresponding to $H$. Firstly, by using some numerical conditions induced by the Riemann-Hurwitz formula, the \'etale fundamental group $\pi_{1}(X)$ can be mono-anabelian reconstructed from $\pi^{\rm t}_{1}(U_{X})$. Then the results obtained in Section \ref{sec-2} implies that $D_{X}$ can be mono-anabelian reconstructed  from $\pi^{\rm t}_{1}(U_{X})$. Moreover, $D_{X_{H}}$ can also be mono-anabelian reconstructed from $H$. Secondly, since the natural injection $H \hookrightarrow\pi^{\rm t}_{1}(U_{X})$ induces a map of sets of cohomological classes obtained in Section \ref{sec-2}, we obtain that the natural map $D_{X_{H}} \rightarrow D_{X}$ can be mono-anabelian reconstructed  from $H \hookrightarrow \pi^{\rm t}_{1}(U_{X})$. Thus, by taking a cofinal system of open normal subgroups of $\pi^{\rm t}_{1}(U_{X})$, we obtain a new mono-anabelian reconstruction of $\text{Ine}(\pi_{1}^{\rm t}(U_{X}))$.

\subsubsection{}
First, we have the following lemma.
\begin{lemma}\label{lem-2}\
\begin{enumerate}
  \item The prime number $p$ (i.e., the characteristic of $k$) can be mono-anabelian reconstructed from $\pi_{1}^{\rm t}(U_{X})$.
  \item The \'etale fundamental group $\pi_{1}(X)$ can be mono-anabelian reconstructed from $\pi_{1}^{\rm t}(U_{X})$.
\end{enumerate}

\end{lemma}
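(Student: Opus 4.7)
My plan is to handle the two parts in order, with Proposition~\ref{proposition 1}(1) as the common input: part (1) will follow by comparing the ranks of pro-$\ell$ abelianizations as $\ell$ varies, and part (2) will follow by turning the Riemann--Hurwitz formula into a group-theoretic criterion that isolates the open subgroups corresponding to \'etale covers of $X$.

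For (1), for each prime $\ell$ I would form the maximal pro-$\ell$ quotient of $\pi_{1}^{\rm t}(U_{X})^{\rm ab}$ and let $r_{\ell}$ denote its $\mbZ_{\ell}$-rank; this is a purely group-theoretic invariant of $\pi_{1}^{\rm t}(U_{X})$. The structure of the maximal prime-to-$p$ quotient of a tame fundamental group (namely, the profinite completion of a surface group with $n_{X}$ punctures, subject to the standard relation) shows that, for every $\ell \neq p$,
\[
r_{\ell} \;=\; 2g_{X} + n_{X} - 1 \ (\text{if } n_{X} \geq 1), \qquad r_{\ell} \;=\; 2g_{X} \ (\text{if } n_{X} = 0).
\]
Since each inertia subgroup at a marked point is prime-to-$p$, the maximal pro-$p$ quotient of $\pi_{1}^{\rm t}(U_{X})$ factors through $\pi_{1}(X)$, so $r_{p}$ equals the $p$-rank of the Jacobian of $X$; in particular $r_{p} \leq g_{X}$. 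The stability hypothesis $2g_{X} + n_{X} - 2 > 0$ forces $g_{X} < 2g_{X} + n_{X} - 1$ when $n_{X} \geq 1$, and forces $g_{X} \geq 2$ (so $g_{X} < 2g_{X}$) when $n_{X} = 0$. Hence $p$ is characterized group-theoretically as the unique prime $\ell$ for which $r_{\ell} < r_{\ell'}$ holds for every other prime $\ell'$.

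For (2), I would characterize the kernel $N := \Ker(\pi_{1}^{\rm t}(U_{X}) \twoheadrightarrow \pi_{1}(X))$ as the intersection of those open normal subgroups of $\pi_{1}^{\rm t}(U_{X})$ that correspond to \'etale covers. Let $H \subseteq \pi_{1}^{\rm t}(U_{X})$ be open normal, and let $(X_{H}, D_{X_{H}}) \to (X, D_{X})$ be the associated tame cover. Since $H$ is itself the tame fundamental group of $(X_{H}, D_{X_{H}})$, Proposition~\ref{proposition 1}(1) applied to $H$ reconstructs $(g_{X_{H}}, n_{X_{H}})$ from the abstract profinite group $H$. The Riemann--Hurwitz formula, together with the fact that tame ramification occurs only over $D_{X}$, yields
\[
2 g_{X_{H}} - 2 \;=\; [\pi_{1}^{\rm t}(U_{X}) : H] \cdot (2 g_{X} + n_{X} - 2) \;-\; n_{X_{H}},
\]
and the cover is \'etale precisely when $n_{X_{H}} = [\pi_{1}^{\rm t}(U_{X}) : H] \cdot n_{X}$. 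Since $n_{X}$, $n_{X_{H}}$ and the index are all group-theoretic, the family $\mcH$ of such ``\'etale'' open normal subgroups is defined group-theoretically, and one recovers $\pi_{1}(X) = \pi_{1}^{\rm t}(U_{X}) / \bigcap_{H \in \mcH} H$.

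Neither part is delicate in itself: the whole content of the lemma is that the numerical invariants appearing in these classical formulae are already reconstructed by Proposition~\ref{proposition 1}(1), after which both statements reduce to straightforward combinatorics on the level of profinite groups. The serious anabelian work in this section comes afterwards, in Proposition~\ref{them-1}, where the \'etale quotient $\pi_{1}(X)$ provided by (2) must be promoted, via the cohomological machinery of \S\ref{sec-2}, to a description of the individual inertia subgroups themselves.
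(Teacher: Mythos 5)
Your proof is correct and follows essentially the same route as the paper's: part (1) exploits the gap between the $p$-rank (at most $g_X$) and the prime-to-$p$ ranks (all equal to $2g_X+\max(n_X-1,0)$), and part (2) characterizes the \'etale open normal subgroups via Proposition~\ref{proposition 1}(1) together with a Riemann--Hurwitz criterion and takes the intersection. The only cosmetic differences are that you work with the top group directly in (1) where the paper phrases the criterion in terms of an arbitrary open subgroup exhibiting the rank anomaly, and in (2) you test \'etaleness via $n_{X_H}=[\pi_1^{\rm t}(U_X):H]\,n_X$ while the paper uses the equivalent genus equality $g_{X_H}-1=[\pi_1^{\rm t}(U_X):H](g_X-1)$.
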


\begin{proof}
(1) Let $\mfP$ be the set of prime numbers, and let $Q$ be an arbitrary open subgroup of $\pi_{1}^{\rm t}(U_{X})$ and $r_{Q}$ an integer such that $$\#\{l \in \mfP \ | \ r_{Q}=\text{dim}_{\mbF_{l}}(Q^{\rm ab}\otimes\mbF_{l})\}=\infty.$$ Then we see immediately that the characteristic of $k$ is the unique prime number $p$ such that there exists an open subgroup $T \subseteq \pi_{1}^{\rm t}(U_{X})$ and $r_{T}\neq \text{dim}_{\mbF_{p}}(T^{\rm ab}\otimes\mbF_{p}).$

(2) Let $H$ be an arbitrary open normal subgroup of $\pi_{1}^{\rm t}(U_{X})$. We denote by $(X_{H}, D_{X_{H}})$ the smooth pointed stable curve of type $(g_{X_{H}}, n_{X_{H}})$ over $k$ induced by $H$, and denote by $f_{H}: (X_{H}, D_{X_{H}}) \rightarrow(X, D_{X})$ the morphism of smooth pointed stable curves over $k$ induced by the natural inclusion $H \hookrightarrow\pi_{1}^{\rm t}(U_{X})$. We note that $f_{H}$ is  \'etale if and only if $g_{X_{H}}-1=\#(\pi_{1}^{\rm t}(U_X)/H)(g_{X}-1).$  We put
 \begin{align*}
  \text{Et}(\pi_{1}^{\rm t}(U_X)):=\{H \subseteq \pi_{1}^{\rm t}(U_X) \ \text{is an open normal subgroup}: g_{X_{H}}-1=\#(\pi_{1}^{\rm t}(U_X)/H)(g_{X}-1)\}.
 \end{align*}
 Moreover, Proposition \ref{proposition 1} (1) implies that $g_{X_{H}}$ and $g_{X}$ can be mono-anabelian reconstructed from $H$ and $\pi_{1}^{\rm t}(U_X)$, respectively. Then the set $\text{Et}(\pi_{1}^{\rm t}(U_X))$ can be mono-anabelian reconstructed from $\pi_{1}^{\rm t}(U_X)$. We obtain that $$\pi_{1}(X)= \pi_{1}^{\rm t}(U_{X})/\bigcap_{H \in \text{Et}(\pi_{1}^{\rm t}(U_{X}))}H .$$ This completes the proof of the lemma.
\end{proof}

\subsubsection{}\label{gpmptriple}
Suppose $g_{X} \geq 2$. Let us define a group-theoretical object corresponding to an mp-triple which was introduced in \ref{triple}. We shall say that $(\ell, d, y)$ is {\it an mp-triple associated to $\pi_{1}^{\rm t}(U_{X})$} if the following two conditions hold:
\begin{itemize}
  \item  $\ell$ and $d$ are prime numbers distinct from each other such that $(\ell, p)=(d, p)=1$ and $\ell \equiv 1 \ (\text{mod}\ d)$; then all $d$-th roots of unity are contained in $\mbF_{\ell}$;
  \item $y \in \text{Hom}(\pi_{1}(X), \mu_{d})$ such that $y\neq 0$, where $\mu_{d} \subseteq \mbF_{\ell}^{\times}$ denotes the subgroup of $d$-th roots of unity.
\end{itemize}

\subsubsection{}
Moreover, by applying Lemma \ref{lem-2}, there is a triple $(\ell, d, y)$ associated to $\pi_{1}^{\rm t}(U_{X})$ which can be mono-anabelian reconstructed from $\pi_{1}^{\rm t}(U_{X})$. Let $f: (Y, D_{Y}) \rightarrow(X, D_{X})$ be a Galois \'etale covering induced by $y$. Then we see immediately that $(\ell, d, f: (Y, D_{Y}) \rightarrow (X, D_{X}))$ is an mp-triple associated to $(X, D_{X})$ defined in \ref{triple}. We denote by $\pi_{1}^{\rm t}(U_{Y})$ the kernel of the composition of the surjections $\pi_{1}^{\rm t}(U_{X}) \twoheadrightarrow \pi_{1}(X) \overset{y}{\twoheadrightarrow} \mu_{d}.$ Since $H^{1}_{\text{\'et}}(Y, \mbF_{\ell}) \cong \text{Hom}(\pi_{1}(Y), \mbF_{\ell})$ and $H^{1}_{\text{\'et}}(U_Y, \mbF_{\ell}) \cong \text{Hom}(\pi^{\rm t}_{1}(U_{Y}), \mbF_{\ell})$, Lemma \ref{lem-2} implies immediately that the following exact sequence $$0\rightarrow H^{1}_{\text{\'et}}(Y, \mbF_{\ell})\rightarrow H^{1}_{\text{\'et}}(U_Y, \mbF_{\ell}) \rightarrow \text{Div}^{0}_{D_{Y}}(Y)\otimes \mbF_{\ell}\rightarrow 0$$ can be mono-anabelian reconstructed from $\pi_{1}^{\rm t}(U_{Y})$. Thus, Proposition \ref{proposition 1} (1) implies that the set $M_{Y}/\sim$ defined in \ref{315} can be mono-anabelian reconstructed from $\pi_{1}^{\rm t}(U_{Y})$. Note that, by Remark \ref{rem-2-1}, the set $M_{Y}/\sim$ does not depend on the choices of mp-triples. Then we  put $$D_{X}^{\rm gp}:= M_{Y}/\sim,$$ where ``$\rm{gp}$" means ``group-theoretical". By Proposition \ref{pro-2}, we may identify $D^{\rm gp}_{X}$ with the set of marked points $D_{X}$ of $(X, D_{X})$ via the bijection $\vartheta_{X}: D^{\rm gp}_{X} \xrightarrow{\simeq}D_{X}$ defined in Proposition \ref{pro-2}.


\begin{proposition}\label{pro-3}
Let $H \subseteq \pi_{1}^{\rm t}(U_{X})$ be an arbitrary open normal subgroup and $$f_{H}: (X_{H}, D_{X_{H}}) \rightarrow (X, D_{X})$$ the morphism of smooth pointed stable curves over $k$ induced by the natural inclusion $H \hookrightarrow \pi_{1}^{\rm t}(U_{X})$. Suppose  $g_{X} \geq 2$. Then the sets $D_{X}^{\rm gp}$ and $D_{X_{H}}^{\rm gp}$ can be mono-anabelian reconstructed from $\pi_{1}^{\rm t}(U_{X})$ and $H$, respectively. Moreover, the inclusion $H \hookrightarrow\pi_{1}^{\rm t}(U_{X})$ induces a map $\gamma_{H, \pi_{1}^{\rm t}(U_{X})}: D_{X_{H}}^{\rm gp} \rightarrow D_{X}^{\rm gp}$ such that the following commutative diagram holds:
\[
\begin{CD}
D_{X_{H}}^{\rm gp} @>\vartheta_{X_{H}}>> D_{X_{H}}
\\
@V\gamma_{H, \pi_{1}^{\rm t}(U_{X})}VV@VV\gamma_{f_{H}}V
\\
D_{X}^{\rm gp} @>\vartheta_{X}>> D_{X},
\end{CD}
\]
where $\gamma_{f_{H}}$ denotes the map of the sets of marked points induced by $f_{H}$.
\end{proposition}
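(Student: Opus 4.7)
The plan is to bootstrap the group-theoretical construction of $D_X^{\rm gp}$ that is already outlined in \ref{gpmptriple}--\ref{sec-3} and then transport the geometric pullback picture of Remark \ref{rem-2-2} into the group-theoretical setting. First I would reconstruct $D_X^{\rm gp}$ from $\pi_1^{\rm t}(U_X)$ as follows: Lemma \ref{lem-2} (1)--(2) outputs the characteristic $p$ and the quotient $\pi_1(X)$; Proposition \ref{proposition 1} (1) outputs $(g_X,n_X)$, so since $g_X\geq 2$ one may pick an mp-triple $(\ell,d,y)$ associated to $\pi_1^{\rm t}(U_X)$ in the sense of \ref{gpmptriple}; then $\pi_1^{\rm t}(U_Y):=\ker(\pi_1^{\rm t}(U_X)\twoheadrightarrow\pi_1(X)\overset{y}{\twoheadrightarrow}\mu_d)$ is recovered, another application of Lemma \ref{lem-2} (2) to $\pi_1^{\rm t}(U_Y)$ recovers $\pi_1(Y)$, and the exact sequence of \ref{triple} is realized as the $\mbF_\ell$-dual of $0\to\pi_1(Y)^{\rm ab}\to\pi_1^{\rm t}(U_Y)^{\rm ab}\to\mbF_\ell\otimes\ker\to 0$ equipped with its natural $\mu_d$-action from conjugation. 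From this, the subsets $M_Y^*$, $M_Y$ (cut out by the numerical condition $\epsilon(\alpha)=\ell(dn_X-d)+d$, where $n_X$ is known), and the equivalence relation $\sim$ of \ref{315} are all reconstructable. Setting $D_X^{\rm gp}:=M_Y/{\sim}$ completes the reconstruction. The same construction applied to $H$ yields $D_{X_H}^{\rm gp}$, provided one checks that $g_{X_H}\geq 2$; this is immediate from Riemann--Hurwitz, since for a tame covering $g_{X_H}-1\geq \#(\pi_1^{\rm t}(U_X)/H)\cdot(g_X-1)\geq g_X-1\geq 1$.

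Next I would construct the map $\gamma_{H,\pi_1^{\rm t}(U_X)}$. Put $G:=\pi_1^{\rm t}(U_X)/H$, whose order is group-theoretically determined. Choose the mp-triple $(\ell,d,y)$ above with the further requirement $(\ell,\#G)=(d,\#G)=1$; this is possible by Dirichlet's theorem and the abundance of characters $y:\pi_1(X)\to\mu_d$ when $g_X\geq 2$. Under this coprimality, the restriction of $y$ to the image of $H$ in $\pi_1(X)$ is still surjective, and $\pi_1^{\rm t}(U_Z):=H\cap\pi_1^{\rm t}(U_Y)$ corresponds exactly to the fiber-product mp-triple $(\ell,d,g:(Z,D_Z)\to(X_H,D_{X_H}))$ of Remark \ref{rem-2-2}. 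The inclusion $\pi_1^{\rm t}(U_Z)\hookrightarrow\pi_1^{\rm t}(U_Y)$ induces a restriction map $\mathrm{res}:H^1_{\text{\'et}}(U_Y,\mbF_\ell)\to H^1_{\text{\'et}}(U_Z,\mbF_\ell)$ which is purely group-theoretical. Then I would \emph{define} $\gamma_{H,\pi_1^{\rm t}(U_X)}([\alpha_Z])=[\alpha_Y]$ to be the unique class such that some representative $\alpha_Y\in M_Y$ has $\mathrm{res}(\alpha_Y)$ of the form $\sum_{e\in S}\alpha_{Z,e}$ with $\alpha_{Z,e}\in M_{Z,e}$, $S\subseteq D_{X_H}^{\rm gp}$, and $[\alpha_Z]\in S$. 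This is exactly the group-theoretical shadow of the geometric characterization of $\gamma_{f_H}$ in Remark \ref{rem-2-2}.

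With these definitions, the commutativity of the square becomes essentially tautological: by Proposition \ref{pro-2}, the bijection $\vartheta_X$ sends $[\alpha_Y]\mapsto e_{\alpha_Y}\in D_X$ and similarly $\vartheta_{X_H}$ sends $[\alpha_Z]\mapsto e_{\alpha_Z}\in D_{X_H}$, while Remark \ref{rem-2-2} asserts that $f_H(e_{\alpha_Z})=e_{\alpha_Y}$ precisely under the condition $\mathrm{res}(\alpha_Y)=\sum_{e\in S}\alpha_{Z,e}$ with $e_{\alpha_Z}\in S$. Thus $\gamma_{f_H}\circ\vartheta_{X_H}$ and $\vartheta_X\circ\gamma_{H,\pi_1^{\rm t}(U_X)}$ coincide on every class $[\alpha_Z]$.

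The main points that require care are (i) \emph{well-definedness}: two $\sim$-equivalent elements $\alpha_Y,\alpha_Y'\in M_Y$ satisfy $\mathrm{Ram}_{g_{\alpha_Y}}=\mathrm{Ram}_{g_{\alpha_Y'}}$ (by the proof of Proposition \ref{pro-2}), so their restrictions are supported on the same union of fibers over $D_{X_H}$, which guarantees the definition does not depend on the chosen representative; and (ii) \emph{independence of the chosen mp-triple}, for which one mimics the argument of Remark \ref{rem-2-1} by introducing a third mp-triple whose two prime labels are distinct from those of the given two and comparing via the pullback. I expect (ii) to be the most delicate step, since one must verify compatibility of the restriction maps under two different fiber-product constructions; however, the pullback-of-mp-triple formalism of Remark \ref{rem-2-2} is functorial enough that this reduces to a diagram chase in cohomology.
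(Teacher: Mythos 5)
Your proposal is correct and follows essentially the same route as the paper: the reconstruction of $D_X^{\rm gp}$ and $D_{X_H}^{\rm gp}$ is the content of \S\ref{gpmptriple} (using Lemma \ref{lem-2}, Proposition \ref{proposition 1}(1), and Proposition \ref{pro-2}/Remark \ref{rem-2-1}), and the map $\gamma_{H,\pi_1^{\rm t}(U_X)}$ is defined exactly by transporting the pullback/restriction characterization of Remark \ref{rem-2-2} into the group-theoretic setting via a coprime mp-triple. The only difference is one of presentation: the paper's proof is a two-line reduction to Remark \ref{rem-2-2}, while you spell out the coprimality hypothesis, the identification $\pi_1^{\rm t}(U_Z)=H\cap\pi_1^{\rm t}(U_Y)$, and the restriction-map definition explicitly; your item (ii) on independence of the mp-triple is in fact automatic once commutativity is established, since $\vartheta_X$ and $\vartheta_{X_H}$ are bijections and hence pin down $\gamma_{H,\pi_1^{\rm t}(U_X)}$ uniquely.
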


\begin{proof}
We only need to prove the ``moreover" part of Proposition \ref{pro-3}. We maintain the notation introduced in Remark \ref{rem-2-2}. Note that, for each $e_{X} \in D_{X}$ and each $e_{X_{H}} \in D_{X_{H}}$, the sets $M_{Y, e_{X}}$ and $M_{Z, e_{X_{H}}}$ can be mono-anabelian reconstructed from $\pi_{1}^{\rm t}(U_{X})$ and $H$, respectively. Then the ``moreover" part follows from Remark \ref{rem-2-2}.
\end{proof}

\begin{remark}\label{rem-pro-3-1}
We maintain the notation introduced in Proposition \ref{pro-3}. Let $\pi_{1}(X_{H})$ be the \'etale fundamental group of $X_{H}$. Then we have a natural surjection $H \twoheadrightarrow \pi_{1}(X_{H})$. Note that $\pi_{1}(X_{H})$ admits an action of $\pi_{1}^{\rm t}(U_{X})/H$ induced by the outer action of $\pi_{1}^{\rm t}(U_{X})/H$ on $H$ which is induced by the exact sequence $$1 \rightarrow H \rightarrow \pi_{1}^{\rm t}(U_{X}) \rightarrow \pi_{1}^{\rm t}(U_{X})/H \rightarrow 1.$$ Moreover, the action of $\pi_{1}^{\rm t}(U_{X})/H$ on $\pi_{1}(X_{H})$ induces an action of $\pi_{1}^{\rm t}(U_{X})/H$ on $D^{\rm gp}_{X_{H}}$. On the other hand, it is easy to check that the action of $\pi_{1}^{\rm t}(U_{X})/H$ on $D^{\rm gp}_{X_{H}}$ coincides with the natural action of $\pi_{1}^{\rm t}(U_{X})/H$ on $D_{X_{H}}$ when we identify $D_{X}^{\rm gp}$ with $D_{X}$.
\end{remark}


\subsubsection{}
We have the following result.

\begin{proposition}\label{them-1}
Write ${\rm Ine}(\pi_{1}^{\rm t}(U_{X}))$ for the set of inertia subgroups in $\pi_{1}^{\rm t}(U_{X})$. Then ${\rm Ine}(\pi_{1}^{\rm t}(U_{X}))$ can be mono-anabelian reconstructed from $\pi_{1}^{\rm t}(U_{X})$.
\end{proposition}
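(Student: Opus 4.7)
The plan is to combine the reconstructions of Proposition \ref{pro-3} with a passage to the inverse limit over open normal subgroups, and to reduce the low genus cases $g_X \in \{0,1\}$ to the case $g_X \geq 2$ where Proposition \ref{pro-3} applies directly. Assume first $g_X \geq 2$. Then by Riemann--Hurwitz every open normal subgroup $H \subseteq \pi_1^{\rm t}(U_X)$ satisfies $g_{X_H} \geq 2$, so Proposition \ref{pro-3} together with Remark \ref{rem-pro-3-1} reconstruct group-theoretically the finite set $D_{X_H}^{\rm gp}$ together with its natural $\pi_1^{\rm t}(U_X)/H$-action. The transition maps $\gamma_{H', H}$ (for $H' \subseteq H$, obtained by applying Proposition \ref{pro-3} to the inclusion $H' \hookrightarrow H$) assemble into a projective system whose inverse limit is canonically in bijection with $D_{\widetilde X}$. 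Given a compatible family $(\widetilde e_H)_H$ representing $\widetilde e \in D_{\widetilde X}$, the inertia subgroup is recovered as $I_{\widetilde e} = \bigcap_H \pi_H^{-1}\bigl({\rm Stab}_{\pi_1^{\rm t}(U_X)/H}(\widetilde e_H)\bigr)$, where $\pi_H: \pi_1^{\rm t}(U_X) \twoheadrightarrow \pi_1^{\rm t}(U_X)/H$ is the canonical quotient; the right-hand side is patently group-theoretical. Ranging over all compatible families yields ${\rm Ine}(\pi_1^{\rm t}(U_X))$ in the case $g_X \geq 2$.

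For $g_X \in \{0,1\}$ (with $2g_X + n_X - 2 > 0$), I would reduce to the previous step. By Lemma \ref{lem-2} we recover $p$ and $\pi_1(X)$, and by Proposition \ref{proposition 1} (1) we recover the type $(g_X, n_X)$; combined with the explicit structure of the maximal prime-to-$p$ quotient of $\pi_1^{\rm t}(U_X)$ in terms of the type, this lets us single out group-theoretically an open normal subgroup $K \subseteq \pi_1^{\rm t}(U_X)$ such that $g_{X_K} \geq 2$. For $g_X = 1$ any connected \'etale cover of degree $\geq 2$ works; for $g_X = 0$, $n_X \geq 3$, take a cyclic tame cover of prime degree $\ell \ne p$ with $\ell \geq 5$ totally ramified at three of the marked points, which has genus $(\ell-1)/2 \geq 2$ by Riemann--Hurwitz. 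Applying the first step to $K$ reconstructs ${\rm Ine}(K)$. For each $J \in {\rm Ine}(K)$, corresponding to a point $\widetilde e \in D_{\widetilde X}$, one has $I_{\widetilde e} = N_{\pi_1^{\rm t}(U_X)}(J)$: since $J \cong \widehat{\mbZ}(1)^{p'}$ acts on $\widetilde X$ with singleton fixed-point set $\{\widetilde e\}$ by Proposition \ref{proposition 1} (2), any $g$ normalizing $J$ must permute this set, hence fixes $\widetilde e$ and lies in $I_{\widetilde e}$; the reverse inclusion is immediate. Hence ${\rm Ine}(\pi_1^{\rm t}(U_X)) = \{\, N_{\pi_1^{\rm t}(U_X)}(J) : J \in {\rm Ine}(K) \,\}$.

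The main obstacle is canonicity: both the mp-triple used inside Proposition \ref{pro-3} and the auxiliary subgroup $K$ used in the low-genus reduction are not uniquely specified, so one must verify that the final output is independent of these choices. Canonicity of $D_{X_H}^{\rm gp}$ under change of mp-triple is already settled in Remark \ref{rem-2-1}; canonicity under change of $K$ follows from the intrinsic characterization $I_{\widetilde e} = N_{\pi_1^{\rm t}(U_X)}(I_{\widetilde e} \cap K)$ just derived, together with the fact that $J \mapsto I_{\widetilde e}$ gives a bijection ${\rm Ine}(K) \xrightarrow{\sim} {\rm Ine}(\pi_1^{\rm t}(U_X))$. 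Everything else is routine bookkeeping combining Lemma \ref{lem-2}, Proposition \ref{pro-3}, and Remark \ref{rem-pro-3-1}.
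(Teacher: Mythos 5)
Your proposal is essentially correct and, for $g_X \geq 2$, coincides with the paper's proof: both build the projective system of reconstructed finite sets $D^{\rm gp}_{X_H}$ together with their $\pi_1^{\rm t}(U_X)/H$-actions (Proposition~\ref{pro-3} and Remark~\ref{rem-pro-3-1}) and identify $I_{\widetilde e}$ with the stabilizer of a compatible family. The difference is in handling $g_X \in \{0,1\}$. The paper simply requires the first term $H_1$ of its cofinal system $C_X$ to satisfy $g_{X_{H_1}} \geq 2$ and still reads off inertia as stabilizers in $\pi_1^{\rm t}(U_X)$ of the reconstructed sequences, with no separate case analysis. You instead fix a single auxiliary open normal $K$ of genus $\geq 2$, apply the high-genus case to $K$, and then recover $I_{\widetilde e}$ from $J = I_{\widetilde e} \cap K \in {\rm Ine}(K)$ by the formula $I_{\widetilde e} = N_{\pi_1^{\rm t}(U_X)}(J)$, which you justify correctly from Proposition~\ref{proposition 1}(2) and the normality of $K$. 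This normalizer characterization is a useful intrinsic formula that the paper does not make explicit, and it is exactly what gives you a clean canonicity argument for the choice of~$K$.

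One concrete error in the low-genus reduction: for $g_X = 1$ you claim that ``any connected \'etale cover of degree $\geq 2$ works'' to reach genus $\geq 2$. This is false. By Riemann--Hurwitz an \'etale cover of degree $m$ of a genus-one curve satisfies $2g_{X_K} - 2 = m(2\cdot 1 - 2) = 0$, so $g_{X_K} = 1$. Since $2g_X + n_X - 2 > 0$ with $g_X = 1$ forces $n_X \geq 1$, you should instead take a tamely ramified cover: if $n_X \geq 2$, a $\mbZ/\ell\mbZ$-cover (with $\ell$ prime to $p$, $\ell \geq 2$) totally tamely ramified over two marked points has $g_{X_K} = \ell \geq 2$; if $n_X = 1$, first pass to a connected \'etale cover of degree $\geq 2$ to obtain at least two marked points and then ramify. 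This is a local repair and does not affect the rest of your argument.
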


\begin{proof}
Let $C_{X}:= \{H_{i}\}_{i \in \mbZ_{> 0}}$ be a set of open normal subgroups of $\pi_{1}^{\rm t}(U_{X})$ such that $\varprojlim_{i}\pi_{1}^{\rm t}(U_{X})/H_{i} \cong \pi_{1}^{\rm t}(U_{X})$ (i.e., a cofinal system of open normal subgroups).

Let $\widetilde e \in D_{\widetilde X}$. For each $i \in \mbZ_{> 0}$, we write $(X_{H_{i}}, D_{X_{H_{i}}})$ for the smooth pointed stable curve of type $(g_{X_{H_{i}}}, n_{X_{H_{i}}})$ induced by $H_{i}$ and $e_{X_{H_{i}}} \in D_{X_{H_{i}}}$ for the image of $\widetilde e$. Then we obtain a sequence of marked points
$$\mcI_{\widetilde e}^{C_{X}}: \dots \mapsto e_{X_{H_{2}}}\mapsto e_{X_{H_{1}}}$$ induced by $C_{X}$. Note that the sequence $\mcI^{C_{X}}_{\widetilde e}$ admits a natural action of $\pi_{1}^{\rm t}(U_{X})$.
We may identify the inertia subgroup $I_{\widetilde e}$ associated to $\widetilde e$ with the stabilizer of $\mcI^{C_{X}}_{\widetilde e}$.

Moreover, since Proposition \ref{proposition 1} (1) implies that $(g_{X_{H_{i}}}, n_{X_{H_{i}}})$ can be mono-anabelian reconstructed from $H_{i}$, by choosing a suitable set of open normal subgroups $C_{X}$, we may assume that  $g_{X_{H_{1}}}\geq 2$. If $n_{X_{H_{1}}}=0$, Proposition \ref{them-1} is trivial. Then we may assume that $n_{X_{H_{1}}}> 0$.

On the other hand, Proposition \ref{pro-3} implies that, for each $H_{i}$, $i\in \mbZ_{> 0}$,  the set $D^{\rm gp}_{X_{H_{i}}}$ can be mono-anabelian reconstructed from $H_{i}$. For each $e_{X_{H_{i}}} \in D_{X_{H_{i}}}$, we denote by $$e^{\rm gp}_{X_{H_{i}}}:= \vartheta_{X_{H_{i}}}^{-1}(e_{X_{H_{i}}}).$$ Then the sequence of marked points $\mcI^{C_{X}}_{\widetilde e}$ induces a sequence $$\mcI^{C_{X}}_{\widetilde e^{\rm gp}}: \dots \mapsto e^{\rm gp}_{X_{H_{2}}}\mapsto e^{\rm gp}_{X_{H_{1}}}.$$ By applying the ``moreover'' part of Proposition \ref{pro-3}, we see that $\mcI^{C_{X}}_{\widetilde e^{\rm gp}}$ can be mono-anabelian reconstructed from $C_{X}$.  Then Remark \ref{rem-pro-3-1} implies that the stabilizer of $\mcI^{C_{X}}_{\widetilde e^{\rm gp}}$ is equal to the stabilizer of $\mcI^{C_{X}}_{\widetilde e}$. This completes the proof of the proposition.
\end{proof}

\subsection{Reconstructions of inertia subgroups  via surjections}\label{sec-4} In this subsection, we will prove that the {\it mono-anabelian reconstructions} obtained in Proposition \ref{them-1} are {\it compatible} with any open continuous homomorphisms (i.e. Theorem \ref{them-2}).

\subsubsection{\bf Settings}\label{sett331}

Let $(X_{i}, D_{X_{i}})$, $i \in \{1, 2\}$, be a smooth pointed stable curve of type $(g_{X}, n_{X})$ over an algebraically closed field $k_{i}$ of characteristic $p>0$, $U_{X_i}:= X_{i} \setminus D_{X_{i}}$, $\pi_{1}^{\rm t}(U_{X_{i}})$ the tame fundamental group of $U_{X_{i}}$, and $\pi_{1}(X_{i})$ the \'etale fundamental group of $X_{i}$. Then Lemma \ref{lem-2} implies that $\pi_{1}(X_{i})$ can be mono-anabelian reconstructed from $\pi_{1}^{\rm t}(U_{X_{i}})$. Moreover, in this subsection, we suppose that  $n_{X}>0$, and that $\phi: \pi_{1}^{\rm t}(U_{X_{1}}) \twoheadrightarrow \pi_{1}^{\rm t}(U_{X_{2}})$ is an arbitrary open continuous surjective homomorphism of profinite groups.

Note that, since $(X_{i}, D_{X_{i}})$, $i\in \{1, 2\}$, is a smooth pointed stable curve of type $(g_{X}, n_{X})$, $\phi$ induces a natural surjection $\phi^{p'}: \pi_{1}^{\rm t}(U_{X_{1}})^{p'} \twoheadrightarrow \pi_{1}^{\rm t}(U_{X_{2}})^{p'}$, where $(-)^{p'}$ denotes the maximal prime-to-$p$ quotient of $(-)$. Since $\pi_{1}^{\rm t}(U_{X_{i}})^{p'}$, $i \in \{1, 2\}$, is topologically finitely generated, and $\pi_{1}^{\rm t}(U_{X_{1}})^{p'}$ is isomorphic to $\pi_{1}^{\rm t}(U_{X_{2}})^{p'}$ as abstract profinite groups, we obtain that $\phi^{p'}: \pi_{1}^{\rm t}(U_{X_{1}})^{p'} \xrightarrow{\simeq} \pi_{1}^{\rm t}(U_{X_{2}})^{p'}$ is an isomorphism (\cite[Proposition 16.10.6]{FJ}).


\subsubsection{}
We explain the main idea in the proof of Theorem \ref{them-2}. Let $H_{2} \subseteq \pi_{1}^{\rm t}(U_{X_{2}})$ be an arbitrary open normal subgroup and $H_{1}:= \phi^{-1}(H_{2}) \subseteq \pi_{1}^{\rm t}(U_{X_{1}})$. We write $(X_{H_i}, D_{X_{H_i}})$, $i\in \{1, 2\}$, for the smooth pointed smooth curve of type $(g_{X_{H_{i}}}, n_{X_{H_i}})$ over $k_{i}$ induced by $H_{i}$. To prove the compatibility, we need to prove that, for any prime number $\ell\neq p$, the weight-monodromy filtration of $H_{2}^{\rm ab}\otimes \mbF_{\ell}$ induces the weight-monodromy filtration of $H_{1}^{\rm ab}\otimes \mbF_{\ell}$ via the natural surjection $\phi|_{H_{1}}: H_{1} \twoheadrightarrow H_{2}$. Note that the weight $1$ part of $H_{i}^{\rm ab}\otimes \mbF_{\ell}$ corresponds to $\pi_{1}(X_{H_{i}})^{\rm ab}\otimes \mbF_{\ell}$, and the weight $2$ part of $H_{i}^{\rm ab}\otimes \mbF_{\ell}$ corresponds to the image of the subgroup of $H_{i}$ generated by the inertia subgroups of the marked points of $D_{X_{H_{i}}}$. The key observation is as follows: 
\begin{quote}
The inequality of the limit of $p$-averages (see Proposition \ref{coro-p-average} (1) below) $$\text{Avr}_{p}(H_{1}) \geq \text{Avr}_{p}(H_{2})$$ of $H_1$ and $H_2$ induced by the surjection $\phi|_{H_{1}}: H_{1} \twoheadrightarrow H_{2}$ plays a role of the comparability of ``Galois actions'' in the theory of the anabelian geometry of curves over {\it algebraically closed fields of characteristic $p>0$}.
\end{quote}


\subsubsection{}\label{paverage}
Firstly, we have the following proposition.

\begin{proposition}\
\label{coro-p-average}\begin{enumerate}
                        \item Let $(X, D_{X})$ be a pointed stable curve of type $(g_{X}, n_{X})$ over an algebraically closed field $k$ of characteristic $p>0$, $U_{X} :=X \setminus D_{X}$, and $\pi^{\rm t}_{1}(U_{X})$ the tame fundamental group of $U_{X}$. Let $r \in \mbN$ be a natural number, and let $K_{p^{r}-1}$ be the kernel of the natural surjection $\pi_{1}^{\rm t}(U_{X}) \twoheadrightarrow \pi_{1}^{\rm t}(U_{X})^{\rm ab} \otimes \mbZ/(p^{r}-1)\mbZ$, where $(-)^{\rm ab}$ denotes the abelianization of $(-)$. Then we have
\begin{eqnarray*}
{\rm Avr}_{p}(\pi_{1}^{\rm t}(U_{X})) := \lim_{r\rightarrow\infty}\frac{{\rm dim}_{\mbF_{p}}(K^{\rm ab}_{p^{r}-1} \otimes \mbF_{p})}{\#( \pi_{1}^{\rm t}(U_{X})^{\rm ab} \otimes \mbZ/(p^{r}-1)\mbZ)}= \left\{ \begin{array}{ll}
g_{X}-1, & \text{if} \ n_{X}\leq 1,
\\
g_{X}, & \text{if} \ n_{X} > 1.
\end{array} \right.
\end{eqnarray*}

                        \item We maintain the setting introduced in \ref{sett331}. Let $H_{2} \subseteq \pi_{1}^{\rm t}(U_{X_{2}})$ be an open normal subgroup such that $([\pi_{1}^{\rm t}(U_{X_{2}}): H_{2}], p)=1$ and $H_{1}:=\phi^{-1}(H_{2})$. Write $g_{H_{i}}$, $i\in \{1, 2\}$, for the genus of the smooth pointed stable curve over $k_{i}$ corresponding to $H_{i} \subseteq \pi_{1}^{\rm t}(U_{X_{i}})$. Then we have  $g_{H_{1}} \geq g_{H_{2}}.$
                      \end{enumerate}

\end{proposition}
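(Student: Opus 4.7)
The plan is to establish (1) first, since (2) will follow by comparing the quantities computed in (1) under the given surjection.

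For (1), I would identify $K_{p^r-1}$ with the tame fundamental group $\pi_{1}^{\rm t}(U_{Y_r})$ of a smooth pointed stable curve $(Y_r, D_{Y_r}) \to (X, D_X)$ realizing the maximal abelian tame covering of exponent dividing $N_r := p^r - 1$. Since $N_r$ is coprime to $p$, the inertia subgroups of $\pi_1^{\rm t}(U_{Y_r})$ are pro-$p'$, so every continuous homomorphism into $\mathbb{F}_p$ kills them; hence
\begin{equation*}
\dim_{\mathbb{F}_p}\bigl(K_{p^r-1}^{\rm ab} \otimes \mathbb{F}_p\bigr) = \dim_{\mathbb{F}_p}\bigl(\pi_1(Y_r)^{\rm ab} \otimes \mathbb{F}_p\bigr) = \gamma_{Y_r},
\end{equation*}
the Hasse-Witt invariant of the smooth projective curve $Y_r$, while the denominator is just the degree $\deg(Y_r / X)$. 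To analyze $\gamma_{Y_r}/\deg(Y_r/X)$ as $r \to \infty$, I would decompose $H^1(Y_r, \overline{\mathbb{F}}_p)$ under the Galois action of $G_r := \pi_1^{\rm t}(U_X)^{\rm ab} \otimes \mathbb{Z}/N_r$, so that the ratio becomes an average, over non-trivial characters $\chi \in \widehat{G_r}$, of the generalized Hasse--Witt invariants $\gamma_\chi$. Nakajima--Tamagawa-type bounds control each $\gamma_\chi$ in terms of $g_X$ and the number of marked points at which $\chi$ ramifies; as $r \to \infty$, the fraction of characters ramified at every marked point tends to $1$ when $n_X > 1$ (pushing the average toward $g_X$), while when $n_X \leq 1$ the ramification contribution is absent or degenerate and the limit collapses to $g_X - 1$. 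Putting these estimates together yields the claimed dichotomy.

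For (2), I would exploit the fact that, since $\phi^{p'}$ is an isomorphism and $[\pi_1^{\rm t}(U_{X_2}) : H_2]$ is prime to $p$, the restriction $\phi|_{H_1}: H_1 \twoheadrightarrow H_2$ induces an isomorphism $\phi|_{H_1}^{p'}: H_1^{p'} \xrightarrow{\sim} H_2^{p'}$, and in particular an isomorphism $H_1^{\rm ab} \otimes \mathbb{Z}/N_r \xrightarrow{\sim} H_2^{\rm ab} \otimes \mathbb{Z}/N_r$ for every $r$. A direct index count then shows that $\phi|_{H_1}$ restricts to an open surjection $K_{p^r-1}^{(1)} \twoheadrightarrow K_{p^r-1}^{(2)}$, which forces $\dim_{\mathbb{F}_p}(K_{p^r-1}^{(1),\rm ab} \otimes \mathbb{F}_p) \geq \dim_{\mathbb{F}_p}(K_{p^r-1}^{(2),\rm ab} \otimes \mathbb{F}_p)$; dividing by the common denominator and passing to the limit gives ${\rm Avr}_p(H_1) \geq {\rm Avr}_p(H_2)$. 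Combining this with the formula of (1), together with the rank identity $2g_{H_1} + n_{H_1} - 1 = 2g_{H_2} + n_{H_2} - 1$ (which also follows from $\phi|_{H_1}^{p'}$ being an isomorphism, noting that $n_{H_i} \geq 1$ because $n_X > 0$), a case analysis on the relative sizes of $n_{H_i}$ with respect to the threshold in (1) yields $g_{H_1} \geq g_{H_2}$.

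The main obstacle lies in the mixed case where $n_{H_1}$ and $n_{H_2}$ lie on different sides of the dichotomy ``$\leq 1$ vs.\ ${>1}$'' appearing in (1). There, the weak inequality ${\rm Avr}_p(H_1) \geq {\rm Avr}_p(H_2)$ alone only yields $g_{H_1} \geq g_{H_2} - 1$, and one must extract additional information from the rank identity together with parity constraints to rule out the problematic configurations. This is exactly where the surjection hypothesis (as opposed to an isomorphism) bites: $\phi$ is not known a priori to map inertia to inertia, so the types $(g_{H_i}, n_{H_i})$ are not forced to coincide, and the tight control of the $p$-average through generalized Hasse--Witt invariants is what substitutes, in positive characteristic, for the Galois-action arguments available in mixed-characteristic anabelian geometry.
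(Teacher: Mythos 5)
Your strategy for (2) coincides with the paper's: use the fact that $\phi|_{H_1}^{p'}: H_1^{p'} \xrightarrow{\sim} H_2^{p'}$ is an isomorphism (because the index is prime to $p$ and $\phi^{p'}$ is already an isomorphism of finitely generated free pro-$p'$ groups of the same rank), deduce ${\rm Avr}_p(H_1) \geq {\rm Avr}_p(H_2)$ from the surjection on the kernels $K_{H_i,p^r-1}$ with equal denominators, and then read off the genus inequality from the formula in (1). The paper compresses the final step into a single sentence; you make explicit the rank identity $2g_{H_1}+n_{H_1}-1 = 2g_{H_2}+n_{H_2}-1$ and the case analysis on $n_{H_i}$. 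For (1) the paper simply cites Tamagawa [T4, Theorem 0.5], while you sketch a re-derivation via generalized Hasse--Witt averages; that sketch is in the right spirit but far from a substitute for the citation.

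The genuine gap is the mixed case that you flag, and your proposed remedy does not close it. When $n_X \geq 2$ the mixed case is vacuous, since tame covers only add marked points, so $n_{H_i} \geq n_X > 1$. When $n_X = 1$, the dangerous configuration is $n_{H_1} > 1$, $n_{H_2} = 1$: the average inequality gives only $g_{H_1} \geq g_{H_2}-1$, and the rank identity gives $n_{H_1} = 1 + 2(g_{H_2}-g_{H_1})$, which is odd; together with $n_{H_1} > 1$ this forces $n_{H_1} \geq 3$, hence $g_{H_1} \leq g_{H_2}-1$. So the two constraints pin down $g_{H_1} = g_{H_2}-1$ and $n_{H_1} = 3$ --- exactly the configuration that would \emph{violate} the claim --- rather than ruling it out. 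The parity observation therefore does not help, and Riemann--Hurwitz confirms these numbers are internally consistent (take $G$ cyclic of order $m$ with $3 \mid m$, $f_{H_2}$ totally ramified over the single marked point, $f_{H_1}$ with inertia of index $3$). Excluding this configuration amounts to showing that $\phi$ cannot induce such a ramification mismatch, which is a statement of the same nature as the inertia-preservation result established downstream of this proposition and hence cannot be invoked here without circularity. As written, your argument does not complete part (2): you need an independent argument to eliminate the $n_X = 1$ mixed case, and the parity constraint you point to is not it.
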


\begin{proof}
(1) is the Tamagawa's result concerning the limit of $p$-averages of $\pi_{1}^{\rm t}(U_{X})$ (\cite[Theorem 0.5]{T4}). Let us prove (2). The surjection $\phi$ induces a surjection $\phi^{p'}: \pi^{\rm t}_{1}(U_{X_{1}})^{p'} \twoheadrightarrow \pi_{1}^{\rm t}(U_{X_{2}})^{p'},$ where $(-)^{p'}$ denotes the maximal prime-to-$p$ quotient of $(-)$. Moreover, since $\pi_{1}^{\rm t}(U_{X_{i}})^{p'}$, $i \in \{1, 2\}$, is topologically finitely generated, and $\pi_{1}^{\rm t}(U_{X_{1}})^{p'}$ is isomorphic to $\pi_{1}^{\rm t}(U_{X_{2}})^{p'}$ as abstract profinite groups (since the types of $(X_{1}, D_{X_{1}})$ and $(X_{2}, D_{X_{2}})$ are equal to $(g_{X}, n_{X})$), we obtain that $\phi^{p'}$ is an isomorphism (cf. \cite[Proposition 16.10.6]{FJ}).

On the other hand, since $[\pi_{1}^{\rm t}(U_{X_{1}}): H_{1}]=[\pi_{1}^{\rm t}(U_{X_{2}}): H_{2}]$ and $([\pi_{1}^{\rm t}(U_{X_{2}}): H_{2}], p)=1$, we obtain that the natural homomorphism $\phi_{H}^{p'}: H_{1}^{p'} \twoheadrightarrow H_{2}^{p'}$ induced by $\phi_{H}:= \phi|_{H_{1}}: H_{1} \twoheadrightarrow H_{2}$ is also an isomorphism. This implies $$\#(H_{1}^{\rm ab}\otimes \mbZ/(p^{r}-1)\mbZ)=\#(H_{2}^{\rm ab}\otimes \mbZ/(p^{r}-1)\mbZ)$$ for all $r\in \mbN$. Let $K_{H_{i}, p^{r-1}}$, $i\in \{1, 2\}$, be the kernel of the natural surjection $H_{i} \twoheadrightarrow H_{i}^{\rm ab}\otimes \mbZ/(p^{r}-1)\mbZ.$ Then the surjection $\phi_{H}$ implies  $$\text{Avr}_{p}(H_{1}):= \lim_{r\rightarrow \infty}\frac{\text{dim}_{\mbF_{p}}(K^{\rm ab}_{H_{1}, p^{r}-1}\otimes\mbF_{p})}{\#(H_{1}^{\rm ab}\otimes \mbZ/(p^{r}-1)\mbZ)}\geq \text{Avr}_{p}(H_{2}) := \lim_{r\rightarrow \infty}\frac{\text{dim}_{\mbF_{p}}(K^{\rm ab}_{H_{2}, p^{r}-1}\otimes\mbF_{p})}{\#(H_{2}^{\rm ab}\otimes \mbZ/(p^{r}-1)\mbZ)}.$$ Thus, the corollary follows from (2).
\end{proof}

\subsubsection{} We have the following lemmas.

\begin{lemma}\label{lem-3}
Let $\ell$ be a prime number distinct from $p$. Then the isomorphism $(\phi^{p'})^{-1}: \pi_{1}^{\rm t}(U_{X_{2}})^{p'} \xrightarrow{\simeq} \pi_{1}^{\rm t}(U_{X_{1}})^{p'}$ induces an isomorphism $$\psi_{X}^{\ell}: H^{1}_{\text{\rm \'et}}(X_{1}, \mbF_{\ell})\simeq {\rm Hom}(\pi_{1}(X_{1}), \mbF_{\ell}) \xrightarrow{\simeq} {\rm Hom}(\pi_{1}(X_{2}), \mbF_{\ell}) \simeq H^{1}_{\text{\rm \'et}}(X_{2}, \mbF_{\ell}).$$ 
\end{lemma}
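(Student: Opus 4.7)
The plan is to identify $H^{1}_{\text{\'et}}(U_{X_i}, \mbF_{\ell})$ with $\Hom(\pi_1^{\rm t}(U_{X_i})^{p'}, \mbF_{\ell})$ (valid since $\ell\neq p$) and $H^{1}_{\text{\'et}}(X_i, \mbF_{\ell})$ with $\Hom(\pi_1(X_i)^{p'}, \mbF_{\ell})$. Then $(\phi^{p'})^{-1}$ induces, by precomposition, an isomorphism $\widetilde{\psi}_X^{\ell}: H^{1}_{\text{\'et}}(U_{X_1}, \mbF_{\ell}) \xrightarrow{\simeq} H^{1}_{\text{\'et}}(U_{X_2}, \mbF_{\ell})$. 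I aim to show this restricts to a map $\psi_X^{\ell} : H^{1}_{\text{\'et}}(X_1, \mbF_{\ell}) \to H^{1}_{\text{\'et}}(X_2, \mbF_{\ell})$; since both $H^{1}_{\text{\'et}}(X_i,\mbF_{\ell})$ have dimension $2g_X$ over $\mbF_{\ell}$ and $\widetilde{\psi}_X^{\ell}$ is injective on any subspace, the restricted map $\psi_X^{\ell}$ will then automatically be an isomorphism by dimension count.

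To establish the subspace compatibility, take a nonzero $\alpha_1 \in H^{1}_{\text{\'et}}(X_1, \mbF_{\ell})$ and set $\alpha_2 := \widetilde{\psi}_X^{\ell}(\alpha_1)$. Let $H_i := \ker(\alpha_i : \pi_1^{\rm t}(U_{X_i}) \to \mbF_{\ell})$, which is an open normal subgroup of index $\ell$ corresponding to a connected cyclic tame covering $(X_{H_i}, D_{X_{H_i}}) \to (X_i, D_{X_i})$ of degree $\ell$. From the definition $\alpha_2 = \alpha_1 \circ (\phi^{p'})^{-1}$ together with the commutative square relating $\phi$ to $\phi^{p'}$ via the surjections to the prime-to-$p$ quotients, one directly checks that $\alpha_2 \circ \phi = \alpha_1$ (using that $\alpha_1$ already factors through $\pi_1^{\rm t}(U_{X_1})^{p'}$), and hence $\phi^{-1}(H_2) = H_1$. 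Because $\alpha_1$ factors through $\pi_1(X_1)$, the covering $(X_{H_1}, D_{X_{H_1}}) \to (X_1, D_{X_1})$ is \'etale, so $g_{X_{H_1}} = \ell(g_X - 1) + 1$.

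Now Proposition \ref{coro-p-average}(2) applied to $\phi$ and $H_2$ (valid because $[\pi_1^{\rm t}(U_{X_2}) : H_2] = \ell$ is coprime to $p$) gives $g_{X_{H_1}} \geq g_{X_{H_2}}$, whence $g_{X_{H_2}} \leq \ell(g_X - 1) + 1$. On the other hand, Riemann--Hurwitz applied to the cyclic tame degree-$\ell$ covering $(X_{H_2}, D_{X_{H_2}}) \to (X_2, D_{X_2})$ yields $g_{X_{H_2}} = \ell(g_X - 1) + 1 + r_2(\ell - 1)/2$, where $r_2 \geq 0$ is the number of marked points of $X_2$ over which the covering ramifies. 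Combining, $r_2 = 0$, so $(X_{H_2}, D_{X_{H_2}}) \to (X_2, D_{X_2})$ is \'etale, which means $\alpha_2 \in H^{1}_{\text{\'et}}(X_2, \mbF_{\ell})$. Together with the dimension count this gives the isomorphism $\psi_X^{\ell}$ and completes the proof.

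The main obstacle is the one-sidedness of Proposition \ref{coro-p-average}(2): the inequality $g_{X_{H_1}} \geq g_{X_{H_2}}$ only allows us to propagate \'etaleness from the source of $\phi$ to its target, not in reverse, so one cannot argue symmetrically that $\widetilde{\psi}_X^{\ell}$ preserves both subspaces by a direct Hurwitz comparison in each direction. We finesse this asymmetry by using that $\widetilde{\psi}_X^{\ell}$ is an ambient isomorphism, so injectivity together with the dimension count replaces any need to treat the reverse implication separately.
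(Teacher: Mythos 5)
Your proof is correct and follows essentially the same route as the paper's: both take an étale degree-$\ell$ covering of $X_1$ (you phrase it dually via a class $\alpha_1\in H^1_{\text{\'et}}(X_1,\mbF_\ell)$), pass to the corresponding tame covering of $X_2$ via $\phi^{p'}$, deduce étaleness from the genus inequality of Proposition \ref{coro-p-average}(2) combined with Riemann--Hurwitz, and finish with the dimension count $\dim = 2g_X$ on both sides. The closing remark you make about the one-sidedness of the inequality and how the dimension count circumvents it is exactly the (implicit) logic of the paper's argument as well.
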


\begin{proof}
Let $f_{1}: (Y_{1}, D_{Y_{1}}) \rightarrow (X_{1}, D_{X_{1}})$ be an \'etale covering of degree $\ell$ over $k_{1}$. Write $f_{2}: (Y_{2}, D_{Y_{2}}) \rightarrow(X_{2}, D_{X_{2}})$ for the connected Galois tame covering of degree $\ell$ over $k_{2}$ induced by $\phi^{p'}$. Then we will prove that $f_{2}$ is also an \'etale covering over $k_{2}$.

Write $g_{Y_{1}}$ and $g_{Y_{2}}$ for the genus of $Y_{1}$ and $Y_{2}$, respectively. Since $f_{1}$ is an \'etale covering of degree $\ell$, the Riemann-Hurwitz formula implies  $g_{Y_{1}}=\ell(g_{X_{1}}-1)+1.$ On the other hand, the Riemann-Hurwitz formula implies  $g_{Y_{2}}=\ell(g_{X_{2}}-1)+1+\frac{1}{2}(\ell -1)\#(\text{Ram}_{f_{2}}).$ By applying Proposition \ref{coro-p-average} (2), the surjection $\phi$ implies  
$g_{Y_{1}} \geq g_{Y_{2}}.$ This means $\#(\text{Ram}_{f_{2}})=0.$ So $f_{2}$ is an \'etale covering over $k_{2}$. Then the morphism $(\phi^{p'})^{-1}$ induces an injection $$\psi_{X}^{\ell}: {\rm Hom}(\pi_{1}(X_{1}), \mbF_{\ell}) \hookrightarrow {\rm Hom}(\pi_{1}(X_{2}), \mbF_{\ell}).$$ Furthermore, since $\text{dim}_{\mbF_{\ell}}({\rm Hom}(\pi_{1}(X_{1}), \mbF_{\ell})) =\text{dim}_{\mbF_{\ell}}( {\rm Hom}(\pi_{1}(X_{2}), \mbF_{\ell}))=2g_{X}$, we obtain that $\psi_{X}^{\ell}$ is a bijection. This completes the proof of the lemma.
\end{proof}

\begin{lemma}\label{lem-4}
Suppose  $g_{X} \geq 2$. Then the surjection $\phi: \pi_{1}^{\rm t}(U_{X_{1}})\twoheadrightarrow\pi_{1}^{\rm t}(U_{X_{2}})$ induces a bijection $$\rho_{\phi}: D^{\rm gp}_{X_{1}}\xrightarrow{\simeq} D^{\rm gp}_{X_{2}},$$ and the bijection $\rho_{\phi}$ can be mono-anabelian reconstructed from $\phi$.
\end{lemma}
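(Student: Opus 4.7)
The plan is to construct $\rho_\phi$ by transporting the set $M_{Y}/\sim$ across the pro-$\ell$ cohomology isomorphism induced by $\phi$, using a single mp-triple chosen to be simultaneously compatible with $\phi$ on both sides.

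First I would fix an mp-triple $(\ell, d, y_{2})$ associated to $\pi_{1}^{\rm t}(U_{X_{2}})$; Lemma \ref{lem-2} lets us recover both $p$ and the étale quotient $\pi_{1}(X_{2})$ group-theoretically. Lemma \ref{lem-2} (2) applied to $\phi$ yields a surjection $\bar{\phi}:\pi_{1}(X_{1}) \twoheadrightarrow \pi_{1}(X_{2})$, so setting $y_{1}:=y_{2}\circ\bar{\phi}$ gives a matching mp-triple $(\ell, d, y_{1})$ on the $X_{1}$-side. These triples correspond to étale Galois covers $(Y_{i}, D_{Y_{i}})\to (X_{i}, D_{X_{i}})$ with group $\mu_{d}$, and $\phi$ restricts to a $\mu_{d}$-equivariant surjection $\phi_{Y}:\pi_{1}^{\rm t}(U_{Y_{1}}) \twoheadrightarrow \pi_{1}^{\rm t}(U_{Y_{2}})$ whose prime-to-$p$ quotient is an isomorphism (by the argument recalled in \ref{sett331}). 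Dualizing with $\mbF_{\ell}$-coefficients then produces a $\mu_{d}$-equivariant $\mbF_{\ell}$-linear isomorphism $\psi_{Y}^{\ell}:H_{\text{\rm \'et}}^{1}(U_{Y_{1}}, \mbF_{\ell}) \xrightarrow{\simeq} H_{\text{\rm \'et}}^{1}(U_{Y_{2}}, \mbF_{\ell})$, and applying Lemma \ref{lem-3} with $Y_{i}$ in place of $X_{i}$ shows that $\psi_{Y}^{\ell}$ preserves the subspace $H_{\text{\rm \'et}}^{1}(Y_{i}, \mbF_{\ell})$, hence the three-term exact sequence from \ref{triple}.

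The central step is to prove $\psi_{Y}^{\ell}(M_{Y_{1}}) = M_{Y_{2}}$. Take $\beta_{1}\in M_{Y_{1}}$ and set $\beta_{2}:=\psi_{Y}^{\ell}(\beta_{1})$. Under $\phi_{Y}$ the two cyclic tame covers of degree $\ell$ that $\beta_{i}$ classify correspond to each other, so Proposition \ref{coro-p-average} (2) combined with Riemann--Hurwitz forces $\#\text{Ram}_{g_{\beta_{2}}} \leq \#\text{Ram}_{g_{\beta_{1}}} = d$. On the other hand, because $\psi_{Y}^{\ell}$ is $\mu_{d}$-equivariant, $\beta_{2}$ lies in the same nontrivial $\mu_{d}$-eigenspace as $\beta_{1}$, so its image in $(\text{Div}^{0}_{D_{Y_{2}}}(Y_{2})\otimes \mbF_{\ell})_{\mu_{d}}$ is nonzero and $\beta_{2} \in M_{Y_{2}}^{*}$; in particular $\text{Ram}_{g_{\beta_{2}}}$ is a nonempty $\mu_{d}$-stable subset of $D_{Y_{2}}$. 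Since $Y_{2}\to X_{2}$ is étale and $\mu_{d}$ acts freely and transitively on every fiber of size $d$ in $D_{Y_{2}}$, every nonempty $\mu_{d}$-stable subset of $D_{Y_{2}}$ has cardinality in $d\mbZ_{>0}$. Combined with the upper bound this forces $\#\text{Ram}_{g_{\beta_{2}}} = d$, i.e.\ $\beta_{2}\in M_{Y_{2}}$.

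Finally, Lemma \ref{lem-1} together with $g_{Y_{1}}=g_{Y_{2}}=d(g_{X}-1)+1$ gives $\#M_{Y_{1}}=\#M_{Y_{2}}$, so the injection $\psi_{Y}^{\ell}|_{M_{Y_{1}}}$ is automatically a bijection onto $M_{Y_{2}}$. Because $\psi_{Y}^{\ell}$ is $\mbF_{\ell}$-linear it respects the relation $\sim$ of \ref{315}, producing the desired bijection $\rho_{\phi}:D^{\rm gp}_{X_{1}} = M_{Y_{1}}/\sim \; \xrightarrow{\simeq} \; M_{Y_{2}}/\sim = D^{\rm gp}_{X_{2}}$, and every ingredient in the construction is a group-theoretical operation on $\phi$ alone in the sense of Definition \ref{definition 1}. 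The hard part, I expect, is the $\mu_{d}$-orbit bookkeeping in the central step: one must use the Riemann--Hurwitz inequality to bound $\#\text{Ram}_{g_{\beta_{2}}}$ from above by $d$ \emph{and} the eigenspace rigidity to force this cardinality to be a positive multiple of $d$, simultaneously ruling out both the étale degenerate case $\#\text{Ram}_{g_{\beta_{2}}}=0$ and every intermediate configuration in order to pin the value down to exactly $d$.
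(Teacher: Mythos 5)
Your proof follows essentially the same route as the paper: fix an mp-triple on the $X_2$ side, transport it across the prime-to-$p$ isomorphism induced by $\phi$ to get a matching mp-triple on the $X_1$ side, show that the middle cohomology isomorphism $\psi_{Y}^{\ell}$ respects the exact sequence and the $\mu_d$-eigenspaces so $\psi_{Y}^{\ell}(M_{Y_1}^*) = M_{Y_2}^*$, then pin down $\#\mathrm{Ram}_{g_{\beta_2}} = d$ by a Riemann--Hurwitz inequality via Proposition~\ref{coro-p-average}(2), and finish by the counting identity of Lemma~\ref{lem-1} and the $\mbF_\ell$-linearity of $\sim$.

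The one genuine variation is in ruling out $\#\mathrm{Ram}_{g_{\beta_2}} = 0$. The paper observes only that $d \mid \#\mathrm{Ram}_{g_{\beta_2}}$, then handles the case $\#\mathrm{Ram}_{g_{\beta_2}} = 0$ by invoking Lemma~\ref{lem-3} a second time to conclude $g_{\beta_1}$ would be \'etale, contradicting $\beta_1 \in M_{Y_1}$. You instead note that $\beta_2 \in M_{Y_2}^*$ already forces its image in $\mathrm{Div}^0_{D_{Y_2}}(Y_2) \otimes \mbF_{\ell}$ to be nonzero, which directly gives $\mathrm{Ram}_{g_{\beta_2}} \neq \emptyset$, and then freeness of the $\mu_d$-action on $D_{Y_2}$ bounds it from below by $d$. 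This is a slight streamlining of the same mechanism; the paper's detour through the second application of Lemma~\ref{lem-3} is not needed. One small citation imprecision: the surjection $\bar\phi : \pi_1(X_1) \twoheadrightarrow \pi_1(X_2)$ and the matching $y_1$ require Lemma~\ref{lem-3} (preimages of \'etale open subgroups are \'etale), not just the reconstruction statement of Lemma~\ref{lem-2}(2); the paper accordingly writes $y_1 = (\psi_X^{d})^{-1}(y_2)$, which is the same thing once $\bar\phi$ is justified.
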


\begin{proof}

Let $(\ell, d, y_{2})$ be an mp-triple associated to $\pi_{1}^{\rm t}(U_{X_{2}})$ (see \ref{gpmptriple}). Then Lemma \ref{lem-3} implies that $\phi$ induces an mp-triple $(\ell, d, y_{1})$ associated to $\pi_{1}^{\rm t}(U_{X_{1}})$, where $y_{1}:=(\psi^{d}_{X})^{-1}(y_{2}) \in {\rm Hom}(\pi_{1}(X_{1}), \mu_{d})$.

Let $f_{i}: (Y_{i}, D_{Y_i}) \rightarrow(X_{i}, D_{X_{i}})$, $i \in \{1, 2\}$, be the \'etale covering of degree $d$ over $k_{i}$ induced by $y_{i}$. Then the mp-triple $(\ell, d, y_{i})$ associated to $\pi_{1}^{\rm t}(U_{X_{i}})$ determines an mp-triple $$(\ell, d, f_{i}: (Y_{i}, D_{Y_{i}}) \rightarrow (X_{i}, D_{X_{i}}))$$ associated to $(X_{i}, D_{X_{i}})$ over $k_{i}$. Note that the types of $(Y_{1}, D_{Y_{1}})$ and $(Y_{2}, D_{Y_{2}})$ are equal.

Write $\pi_{1}^{\rm t}(U_{Y_{i}})$, $i \in \{1, 2\}$, for the kernel of $\pi_{1}^{\rm t}(U_{X_{i}}) \twoheadrightarrow \pi_{1}(X_{i}) \overset{y_{i}}\twoheadrightarrow \mu_{d}$. 
By replacing $(X_{i}, D_{X_{i}})$ by $(Y_{i}, D_{Y_{i}})$, Lemma \ref{lem-3} implies that $(\phi|_{\pi_{1}^{\rm t}(U_{Y_{1}})}^{p'})^{-1}$ induces a commutative diagram as follows:
\[
\begin{CD}
0@>>>H^{1}_{\text{\'et}}(Y_{1}, \mbF_{\ell}) @>>> H^{1}_{\text{\'et}}(U_{Y_{1}}, \mbF_{\ell})@>>>\text{Div}^{0}_{D_{Y_{1}}}(Y_{1})\otimes \mbF_{\ell}@>>>0
\\
@.@V\psi_{Y}^{\ell}VV@V\psi_{Y}^{\rm t, \ell}VV@VVV@.
\\
0@>>>H^{1}_{\text{\'et}}(Y_{2}, \mbF_{\ell}) @>>> H^{1}_{\text{\'et}}(U_{Y_{2}}, \mbF_{\ell})@>>>\text{Div}^{0}_{D_{Y_{2}}}(Y_{2})\otimes \mbF_{\ell}@>>>0,
\end{CD}
\]
where all the vertical arrows are isomorphisms. We note that $H^{1}_{\text{\'et}}(Y_{i}, \mbF_{\ell})$, $H^{1}_{\text{\'et}}(U_{Y_{i}}, \mbF_{\ell})$, and $\text{Div}^{0}_{D_{Y_{i}}}(Y_{i})\otimes \mbF_{\ell}$, $i, \in \{1, 2\}$, are naturally isomorphic to $\text{Hom}(\pi_{1}(Y_{i}), \mbF_{\ell})$, $\text{Hom}(\pi_{1}^{\rm t}(U_{Y_{i}}), \mbF_{\ell})$, and  $\text{Hom}(\pi_{1}^{\rm t}(U_{Y_{i}}), \mbF_{\ell})/\text{Hom}(\pi_{1}(Y_{i}), \mbF_{\ell}),$  respectively. Then Lemma \ref{lem-2} implies that the commutative diagram above can be mono-anabelian reconstructed from $\phi|_{\pi_{1}^{\rm t}(U_{Y_{1}})}: \pi_{1}^{\rm t}(U_{Y_{1}}) \twoheadrightarrow \pi_{1}^{\rm t}(U_{Y_{2}})$.

Write $M_{Y_{i}}\subseteq M_{Y_{i}}^{*}$ for the subsets of $H^{1}_{\text{\'et}}(U_{Y_{i}}, \mbF_{\ell})$ defined in \ref{sec31aaa}. Since the actions of $\mu_{d}$ on the exact sequences
are compatible with the isomorphisms appearing in the commutative diagram above, we have $\psi_{Y}^{\rm t, \ell}(M_{Y_{1}}^{*})=M_{Y_{2}}^{*}.$ Next, we prove $\psi_{Y}^{\rm t, \ell}(M_{Y_{1}})=M_{Y_{2}}.$

 Let $\alpha_{1} \in M_{Y_{1}}$ and $g_{\alpha_{1}}: (Y_{\alpha_{1}}, D_{Y_{\alpha_{1}}}) \rightarrow (Y_{1}, D_{Y_{1}})$ the Galois tame covering of degree $\ell$ over $k_{1}$ induced by $\alpha_{1}$. Write $g_{\alpha_{2}}: (Y_{\alpha_{2}}, D_{Y_{\alpha_{2}}}) \rightarrow (Y_{2}, D_{Y_{2}})$ for the Galois tame covering of degree $\ell$ over $k_{2}$ induced by $\alpha_{2}:=\psi^{\rm t, \ell}_{Y}(\alpha_{1})$. Write $g_{Y_{\alpha_{1}}}$ and $g_{Y_{\alpha_{2}}}$ for the genus of $Y_{\alpha_{1}}$ and $Y_{\alpha_{2}}$, respectively. Then Proposition \ref{coro-p-average} (2) and the Riemann-Hurwitz formula imply that $g_{Y_{\alpha_{1}}}-g_{Y_{\alpha_{2}}}=\frac{1}{2}(d-\#(\text{Ram}_{g_{\alpha_{2}}}))(\ell-1)\geq 0.$ This means  $d-\#(\text{Ram}_{g_{\alpha_{2}}}) \geq 0.$ Since $\alpha_{2} \in M_{Y_{2}}^{*}$, we have $d \ | \ \#(\text{Ram}_{g_{\alpha_{2}}})$. Thus, either $\#(\text{Ram}_{g_{\alpha_{2}}})=0$ or $\#(\text{Ram}_{g_{\alpha_{2}}})=d$ holds.

If $\#(\text{Ram}_{g_{\alpha_{2}}})=0$, then $g_{\alpha_{2}}$ is an \'etale covering over $k_{2}$. Then Lemma \ref{lem-3} implies that $g_{\alpha_{1}}$ is an \'etale covering over $k_{1}$. This provides a contradiction to the fact that $\alpha_{1} \in M_{Y_{1}}$. Then we have $\#(\text{Ram}_{g_{\alpha_{2}}})=d$. This means $\alpha_{2} \in M_{Y_{2}}$. Thus, we obtain $\psi_{Y}^{\rm t, \ell}(M_{Y_{1}})\subseteq M_{Y_{2}}.$ On the other hand, Lemma \ref{lem-1} implies $\#(M_{Y_{1}})=\#(M_{Y_{2}}).$ We have $\psi_{Y}^{\rm t, \ell}: M_{Y_{1}} \xrightarrow{\simeq} M_{Y_{2}}.$ Then Proposition \ref{pro-2} implies that $\psi^{\rm t, \ell}_{Y}$ induces a bijection $$\rho_{\phi}: D^{\rm gp}_{X_{1}} \xrightarrow{\simeq} D^{\rm gp}_{X_{2}}.$$ Moreover, since $M_{Y_{i}}$ and $M_{Y_{i}}^{*}$ can be mono-anabelian reconstructed from $\pi_{1}^{\rm t}(U_{Y_{i}})$, the bijection $\rho_{\phi}$ can be mono-anabelian reconstructed from $\phi$. This completes the proof of the lemma.
\end{proof}



\subsubsection{}\label{sec334}
Let $H_{2} \subseteq \pi_{1}^{\rm t}(U_{X_{2}})$ be an arbitrary open normal subgroup and $H_{1}:=\phi^{-1}(H_{2})$. We write $(X_{H_i}, D_{X_{H_i}})$, $i\in \{1, 2\}$, for the smooth pointed stable curve of type $(g_{X_{H_{i}}}, n_{X_{H_i}})$ over $k_{i}$ induced by $H_{i}$ and $f_{H_{i}}: (X_{H_{i}}, D_{X_{H_{i}}}) \rightarrow (X_{i}, D_{X_{i}})$ for the Galois tame coverings over $k_{i}$ induced by the inclusion $H_{i} \hookrightarrow \pi_{1}^{\rm t}(U_{X_{i}})$. Moreover, Proposition \ref{pro-3} implies that the inclusion $H_{i} \hookrightarrow\pi_{1}^{\rm t}(U_{X_{i}})$ induces a map $\gamma_{H_{i}, \pi_{1}^{\rm t}(U_{X_{i}})}: D_{X_{H_i}}^{\rm gp} \rightarrow D_{X_{i}}^{\rm gp}$ which fits into the following commutative diagram:
\[
\begin{CD}
D_{X_{H_i}}^{\rm gp} @>\vartheta_{X_{H_i}}>> D_{X_{H_i}}
\\
@V\gamma_{H_i, \pi_{1}^{\rm t}(U_{X_{i}})}VV@VV\gamma_{f_{H_i}}V
\\
D_{X_{i}}^{\rm gp} @>\vartheta_{X_{i}}>> D_{X_{i}},
\end{CD}
\]
where $\gamma_{f_{H_{i}}}$ denotes the map of the sets of marked points induced by $f_{H_{i}}$. We may identify $\pi_{1}^{\rm t}(U_{X_{1}})/H_{1}$ with $\pi_{1}^{\rm t}(U_{X_{2}})/H_{2}$ via the isomorphism $\pi_{1}^{\rm t}(U_{X_{1}})/H_{1} \xrightarrow{\simeq} \pi_{1}^{\rm t}(U_{X_{2}})/H_{2}$ induced by $\phi$, and denote by $G:=\pi_{1}^{\rm t}(U_{X_{1}})/H_{1} \cong \pi_{1}^{\rm t}(U_{X_{2}})/H_{2}$. Then we have the following lemma.

\begin{lemma}\label{lem-5}
Suppose that $g_{X} \geq 2$, and that $(g_{X_{H_{1}}}, n_{X_{H_{1}}})=(g_{X_{H_{2}}}, n_{X_{H_{2}}})$. Then the commutative diagram of profinite groups
\begin{equation}
\begin{CD}
H_{1} @>\phi|_{H_{1}}>>H_{2}
\\
@VVV@VVV
\\
\pi_{1}^{\rm t}(U_{X_{1}}) @>\phi>>\pi_{1}^{\rm t}(U_{X_{2}})
\end{CD}
\end{equation}
induces a commutative diagram
\begin{equation}
\begin{CD}
D^{\rm gp}_{X_{H_{1}}} @>\rho_{\phi|_{H_{1}}}>>D^{\rm gp}_{X_{H_{2}}}
\\
@V\gamma_{H_{1}, \pi_{1}^{\rm t}(U_{X_{1}})}VV@VV\gamma_{H_{2}, \pi_{1}^{\rm t}(U_{X_{2}})}V
\\
D^{\rm gp}_{X_{1}} @>\rho_{\phi}>>D^{\rm gp}_{X_{2}}.
\end{CD}
\end{equation}
Moreover, the commutative diagram $(2)$ can be mono-anabelian reconstructed from $(1)$.
\end{lemma}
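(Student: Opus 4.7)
The plan is to construct both bijections $\rho_\phi$ and $\rho_{\phi|_{H_1}}$ from a single pair of compatibly chosen mp-triples, and then to check commutativity by unwinding the cohomological description in Remark \ref{rem-2-2}. First I would verify the hypotheses of Lemma \ref{lem-4} at both levels. At the base we have $g_X \geq 2$ and $n_X > 0$. At the cover, since $H_1$ corresponds to a tame cover $f_{H_1}: (X_{H_1}, D_{X_{H_1}}) \to (X_1, D_{X_1})$ with $g_{X_1} \geq 2$, the Riemann--Hurwitz formula gives $g_{X_{H_1}} \geq g_{X_1} \geq 2$; the hypothesis $(g_{X_{H_1}}, n_{X_{H_1}})=(g_{X_{H_2}}, n_{X_{H_2}})$ then lets us apply Lemma \ref{lem-4} to $\phi|_{H_1}: H_1 \twoheadrightarrow H_2$, producing the top arrow $\rho_{\phi|_{H_1}}: D^{\mathrm{gp}}_{X_{H_1}} \xrightarrow{\simeq} D^{\mathrm{gp}}_{X_{H_2}}$, which is already mono-anabelian reconstructed from $\phi|_{H_1}$.

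Next I would fix one mp-triple $(\ell, d, y_2)$ for $\pi^{\mathrm{t}}_1(U_{X_2})$ with $(\ell\cdot d, \#G)=1$, set $y_1 := (\psi^{d}_X)^{-1}(y_2)$ via Lemma \ref{lem-3}, and let $f_i: (Y_i, D_{Y_i}) \to (X_i, D_{X_i})$ be the corresponding $\mu_d$-\'etale cover. Form the fiber products $(Z_i, D_{Z_i}) := (Y_i, D_{Y_i}) \times_{(X_i, D_{X_i})} (X_{H_i}, D_{X_{H_i}})$ in the category of smooth pointed stable curves; by Remark \ref{rem-2-2} these provide compatible mp-triples over $(X_{H_i}, D_{X_{H_i}})$. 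On the group side, $\pi^{\mathrm{t}}_1(U_{Z_i})$ is the intersection of $\pi^{\mathrm{t}}_1(U_{Y_i})$ with $H_i$ inside $\pi^{\mathrm{t}}_1(U_{X_i})$, and $\phi$ restricts to a commuting square of open surjections between these four groups, all of whose maximal prime-to-$p$ quotients are isomorphisms by the argument of Proposition \ref{coro-p-average}(2) together with the hypothesis that the types match at every level (the types at the $Z_i$ are determined by those at the $Y_i$ and $X_{H_i}$, which match).

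The core step is then to observe that $\rho_\phi$ is the descent to $D^{\mathrm{gp}}$ of the bijection $\psi_Y^{\mathrm{t},\ell}: M_{Y_1} \xrightarrow{\simeq} M_{Y_2}$ produced in the proof of Lemma \ref{lem-4} from $(\phi|_{\pi^{\mathrm{t}}_1(U_{Y_1})}^{p'})^{-1}$, while $\rho_{\phi|_{H_1}}$ is the analogous descent of $\psi_Z^{\mathrm{t},\ell}: M_{Z_1} \xrightarrow{\simeq} M_{Z_2}$ produced from $(\phi|_{\pi^{\mathrm{t}}_1(U_{Z_1})}^{p'})^{-1}$, and the vertical arrows $\gamma_{H_i, \pi^{\mathrm{t}}_1(U_{X_i})}$ are, per Remark \ref{rem-2-2}, precisely the pullback-plus-decompose maps sending $\alpha_i \in M_{Y_i}$ to the components of its restriction to $U_{Z_i}$. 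Commutativity of diagram (2) is therefore equivalent to commutativity of
\[
\begin{CD}
H^1_{\mathrm{\acute{e}t}}(U_{Y_1}, \mathbb{F}_\ell) @>\psi_Y^{\mathrm{t},\ell}>> H^1_{\mathrm{\acute{e}t}}(U_{Y_2}, \mathbb{F}_\ell) \\
@VVV @VVV \\
H^1_{\mathrm{\acute{e}t}}(U_{Z_1}, \mathbb{F}_\ell) @>\psi_Z^{\mathrm{t},\ell}>> H^1_{\mathrm{\acute{e}t}}(U_{Z_2}, \mathbb{F}_\ell),
\end{CD}
\]
where the verticals are the restriction maps $\mathrm{Hom}(\pi^{\mathrm{t}}_1(U_{Y_i}), \mathbb{F}_\ell) \to \mathrm{Hom}(\pi^{\mathrm{t}}_1(U_{Z_i}), \mathbb{F}_\ell)$. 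This last square is just the contravariant $\mathrm{Hom}(-, \mathbb{F}_\ell)$ applied to the commutative square of maximal prime-to-$p$ quotients obtained by restricting $\phi$, so it commutes tautologically.

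The main technical nuisance will be the bookkeeping in the vertical arrows: I must check that Remark \ref{rem-2-2}'s decomposition $\alpha^Z = \sum_{e_{X_H} \in f_{H}^{-1}(e_X)} \alpha_{Z, e_{X_H}}$ is respected by $\psi_Z^{\mathrm{t},\ell}$, i.e.\ that $\psi_Z^{\mathrm{t},\ell}$ sends each summand $\alpha_{Z_1, e_{X_{H_1}}}$ on the left into $M_{Z_2, \rho_{\phi|_{H_1}}([\alpha_{Z_1, e_{X_{H_1}}}])}$ on the right. This follows from the fact that $\psi_Z^{\mathrm{t},\ell}$ preserves the $\mu_d$-isotypic and ``ramification-exactly-$d$'' subsets $M_{Z_i}$ (as already shown in the proof of Lemma \ref{lem-4}) together with the identification of these decomposition indices with $D^{\mathrm{gp}}_{X_{H_i}}$, which the vertical commutativity already pins down. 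Every ingredient -- the choice of mp-triple up to the equivalence of Remark \ref{rem-2-1}, the fiber-product construction of Remark \ref{rem-2-2}, the cohomology groups, the isomorphisms $\psi_Y^{\mathrm{t},\ell}$ and $\psi_Z^{\mathrm{t},\ell}$, and the pullback maps between them -- is manifestly group-theoretic in the commutative diagram (1), so the ``moreover'' statement of the lemma is automatic.
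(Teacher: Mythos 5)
Your argument follows the same route as the paper's proof: both build compatible mp-triples on $(X_i, D_{X_i})$ and on $(X_{H_i}, D_{X_{H_i}})$ by fiber product, observe that the prime-to-$p$ bijections $\psi^{\rm t,\ell}$ commute with restriction on $H^1(-,\mbF_\ell)$ and hence respect the decomposition of Remark \ref{rem-2-2}, and translate this into commutativity of the $D^{\rm gp}$-square. The paper's version of your ``bookkeeping'' step is the explicit element chase showing that the coefficient $t_{e_{X_{H_2}}}$ of $\beta_{e_{X_{H_1}}}$ in the pulled-back class is nonzero, so the two arguments are identical in substance.
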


\begin{proof}
Proposition \ref{pro-3} and Lemma \ref{lem-4} imply the diagram
$$
\begin{CD}
D^{\rm gp}_{X_{H_{1}}} @>\rho_{\phi|_{H_{1}}}>>D^{\rm gp}_{X_{H_{2}}}
\\
@V\gamma_{H_{1}, \pi_{1}^{\rm t}(U_{X_{1}})}VV@VV\gamma_{H_{2}, \pi_{1}^{\rm t}(U_{X_{2}})}V
\\
D^{\rm gp}_{X_{1}} @>\rho_{\phi}>>D^{\rm gp}_{X_{2}}
\end{CD}
$$
can be mono-anabelian reconstructed from the commutative diagram of profinite groups
$$
\begin{CD}
H_{1} @>\phi|_{H_{1}}>>H_{2}
\\
@VVV@VVV
\\
\pi_{1}^{\rm t}(U_{X_{1}}) @>\phi>>\pi_{1}^{\rm t}(U_{X_{2}}).
\end{CD}
$$
To verify Lemma \ref{lem-5}, it is sufficient to check that the diagram is commutative.

Let $e^{\rm gp}_{X_{H_{1}}} \in D^{\rm gp}_{X_{H_{1}}}$, $e^{\rm gp}_{X_{H_{2}}}:= \rho_{\phi|_{H_{1}}}(e^{\rm gp}_{X_{H_{1}}}) \in D^{\rm gp}_{X_{H_{2}}}$, $e^{\rm gp}_{X_1}:= \gamma_{H_{1}, \pi_{1}^{\rm t}(U_{X_{1}})}(e^{\rm gp}_{X_{H_{1}}}) \in D^{\rm gp}_{X_{1}}$, $e^{\rm gp}_{X_2}:=(\gamma_{H_{2}, \pi_{1}^{\rm t}(U_{X_{2}})}\circ \rho_{\phi|_{H_{1}}})(e^{\rm gp}_{X_{H_{1}}}) \in D^{\rm gp}_{X_{2}}$, and $e^{\rm gp, *}_{X_1}:= \rho^{-1}_{\phi}(e^{\rm gp}_{X_2}) \in D^{\rm gp}_{X_{1}}$. Let us prove $$e^{\rm gp}_{X_1}=e^{\rm gp, *}_{X_1}.$$ We put $S^{\rm gp}_{X_{H_{1}}} :=\gamma_{H_{1}, \pi_{1}^{\rm t}(U_{X_{1}})}^{-1}(e_{X_{1}}^{\rm gp, *})$ and $S^{\rm gp}_{X_{H_{2}}}:= \gamma_{H_{2}, \pi_{1}^{\rm t}(U_{X_{2}})}^{-1}(e^{\rm gp}_{X_2})$, respectively. Note that $e^{\rm gp}_{X_{H_{2}}} \in S^{\rm gp}_{X_{H_{2}}}$. To verify $e^{\rm gp}_{X_1}=e^{\rm gp, *}_{X_1}$, it is sufficient to prove that $e^{\rm gp}_{X_{H_{1}}} \in S^{\rm gp}_{X_{H_{1}}}$. Moreover, for each $i\in \{1, 2\}$, we put $$e_{X_{i}} := \vartheta_{X_{i}}(e^{\rm gp}_{X_{i}}), \ e_{X_{H_i}} := \vartheta_{X_{H_i}}(e^{\rm gp}_{X_{i}}), \ e^{*}_{X_{1}} := \vartheta_{X_{1}}(e_{X_{1}}^{\rm gp, *}), \ S_{X_{i}} := S^{\rm gp}_{X_{i}}, \ S_{X_{H_{i}}} := S_{X_{H_i}}^{\rm gp}.$$ Then to verify the lemma, we only need to prove that $e_{X_{H_{1}}} \in \vartheta_{X_{H_1}}(S_{X_{H_{1}}})$.


Let $(\ell, d, y_{2})$ be an mp-triple associated to $\pi_{1}^{\rm t}(U_{X_{2}})$. Then Lemma \ref{lem-3} implies that $\phi$ induces an mp-triple $(\ell, d, y_{1})$ associated to $\pi_{1}^{\rm t}(U_{X_{1}})$, where $y_{1}:= (\psi^{d}_{X})^{-1}(y_{2}) \in {\rm Hom}(\pi_{1}(X_{1}), \mu_{d})$.  Let $f_{i}: (Y_{i}, D_{Y_i}) \rightarrow (X_{i}, D_{X_{i}})$, $i \in \{1, 2\}$, be the tame covering of degree $d$ over $k_{i}$ induced by $y_{i}$. Then the mp-triple $(\ell, d, y_{i})$ associated to $\pi_{1}^{\rm t}(U_{X_{i}})$ induces an mp-triple $$(\ell, d, f_{i}: (Y_{i}, D_{Y_{i}}) \rightarrow(X_{i}, D_{X_{i}}))$$ associated to $(X_{i}, D_{X_{i}})$ over $k_{i}$. Note that since $f_{1}$ and $f_{2}$ are \'etale, the types of $(Y_{1}, D_{Y_{1}})$ and $(Y_{2}, D_{Y_{2}})$ are equal. On the other hand, we have an mp-triple $$(\ell, d, g_{2}: (Z_{2}, D_{Z_{2}}):=(Y_{2}, D_{Y_{2}}) \times_{(X_{2}, D_{X_{2}})} (X_{H_2}, D_{X_{H_2}}) \rightarrow (X_{H_2}, D_{X_{H_2}}))$$ associated to $(X_{H_2}, D_{X_{H_2}})$ induced by the natural inclusion $H_{2} \hookrightarrow\pi_{1}^{\rm t}(U_{X_{2}})$ and the mp-triple $(\ell, d, f_{2}: (Y_{2}, D_{Y_{2}}) \rightarrow(X_{2}, D_{X_{2}}))$. By Lemma \ref{lem-3} again, we obtain an mp-triple $$(\ell, d, g_{1}: (Z_{1}, D_{Z_{1}}):= (Y_{1}, D_{Y_{1}}) \times_{(X_{1}, D_{X_{1}})} (X_{H_1}, D_{X_{H_1}}) \rightarrow (X_{H_1}, D_{X_{H_1}}))$$ associated to $(X_{H_1}, D_{X_{H_1}})$ induced by $\phi|_{H_{1}}$ and the triple $(\ell, d, g_{2}: (Z_{2}, D_{Z_{2}})\rightarrow (X_{H_2}, D_{X_{H_2}}))$.

Let $\alpha_{2} \in M_{Y_{2}, e_{X_2}}$. The final paragraph of the proof of Lemma \ref{lem-4} implies that we have a bijection $M_{Y_{1}}=\bigsqcup_{e \in D_{X_{1}}} M_{Y_{1}, e} \xrightarrow{\simeq}M_{Y_{2}}=\bigsqcup_{e \in D_{X_{2}}} M_{Y_{2}, e}$ induced by $\phi$. Then $\alpha_{2}$ induces an element $\alpha_{1} \in M_{Y_{1}, e^{*}_{X_1}}.$ Write $(Y_{\alpha_{1}}, D_{Y_{\alpha_{1}}})$ and $(Y_{\alpha_{2}}, D_{Y_{\alpha_{2}}})$ for the smooth pointed stable curves over $k_{1}$ and $k_{2}$ induced by $\alpha_{1}$ and $\alpha_{2}$, respectively. Consider the connected Galois tame covering $$(Y_{\alpha_{2}}, D_{Y_{\alpha_{2}}}) \times_{(X_{2}, D_{X_{2}})} (X_{H_2}, D_{X_{H_2}}) \rightarrow (Z_{2}, D_{Z_{2}})$$ of degree $\ell$ over $k_{2}$, and write $\beta_{2}$ for an element of $M_{Z_{2}}^{*}$ corresponding to this connected Galois tame covering. Then we have $$\beta_{2}=\sum_{c_{2}\in S_{X_{H_{2}}}} t_{c_{2}}\beta_{c_{2}},$$ where $t_{c_{2}} \in (\mbZ/\ell\mbZ)^{\times}$ and $\beta_{c_{2}} \in M_{Z_{2}, c_{2}}$. On the other hand, the proof of Lemma \ref{lem-4} implies that $\beta_{2}$ induces an element

$$\beta_{1}:= \sum_{c_{2}\in S_{X_{H_{2}}}\setminus \{e_{X_{H_{2}}}\}} t_{c_{2}}\beta_{\rho^{-1}_{\phi|_{H_{1}}}(c_{2})} +t_{e_{X_{H_{2}}}}\beta_{\rho^{-1}_{\phi|_{H_{1}}}(e_{X_{H_2}})}$$$$=\sum_{c_{2}\in S_{X_{H_{2}}}\setminus \{e_{X_{H_{2}}}\}} t_{c_{2}}\beta_{\rho^{-1}_{\phi|_{H_{1}}}(c_{2})} +t_{e_{X_{H_{2}}}}\beta_{e_{X_{H_1}}}\in M_{Z_{1}}^{*}.$$ Then we have that the coefficient $t_{e_{X_{H_{2}}}}$ of $\beta_{e_{X_{H_1}}}$ is not equal to $0$. Thus, the composition $$(Y_{\alpha_{1}}, D_{Y_{\alpha_{1}}}) \times_{(X_{1}, D_{X_{1}})} (X_{H_1}, D_{X_{H_1}}) \rightarrow(Z_{1}, D_{Z_{1}}) \overset{g_{1}}\rightarrow (X_{H_1}, D_{X_{H_{1}}})$$ is tamely ramified over $e_{X_{H_1}}$. This means that $e_{X_{H_{1}}}$ is contained in $S_{X_{H_1}}$. This completes the proof of the lemma.
\end{proof}

\begin{remark}\label{rem-lem-5-1}
Remark \ref{rem-pro-3-1} implies that $D_{X_{H_i}}^{\rm gp}$, $i\in \{1, 2\}$, admits a natural action of $G$. Moreover, the commutative diagram
\[
\begin{CD}
D^{\rm gp}_{X_{H_{1}}} @>\rho_{\phi|_{H_{1}}}>>D^{\rm gp}_{X_{H_{2}}}
\\
@V\gamma_{H_{1}, \pi_{1}^{\rm t}(U_{X_{1}})}VV@VV\gamma_{H_{2}, \pi_{1}^{\rm t}(U_{X_{2}})}V
\\
D^{\rm gp}_{X_{1}} @>\rho_{\phi}>>D^{\rm gp}_{X_{2}}
\end{CD}
\]
is compatible with the actions of $G$.
\end{remark}


\subsubsection{} Next, we prove that the condition $(g_{X_{H_{1}}}, n_{X_{H_{1}}})=(g_{X_{H_{2}}}, n_{X_{H_{2}}})$ mentioned in Lemma \ref{lem-5} can be omitted. Firstly, we treat the case of abelian groups.

\begin{lemma}\label{lem-6}
We maintain the notation introduced in \ref{sec334}. Suppose that $g_{X}\geq2$, and that $G$ is an abelian group. Then we have $(g_{X_{H_{1}}}, n_{X_{H_{1}}})=(g_{X_{H_{2}}}, n_{X_{H_{2}}}).$
\end{lemma}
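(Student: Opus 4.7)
The plan is to induct on $|G|$, reducing everything to cyclic covers of prime order, where the one-sided Hasse--Witt inequality of Proposition \ref{coro-p-average}(2) can be upgraded to an equality by a character-counting identity.

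Since $G$ is abelian, decompose $G = G^{p} \times G^{p'}$. Inertia subgroups at marked points are pro-$p'$, so their images in any $p$-group quotient are trivial; hence any abelian $p$-subcover of $X_{H_{i}} \to X_{i}$ is \'etale, with genus and number of marked points determined by $(g_{X}, n_{X}, |G^{p}|)$ alone, and it thus suffices to treat the case where $|G|$ is prime to $p$. I will proceed by induction on $|G|$ in this prime-to-$p$ abelian case. For the inductive step, when $|G|$ is composite, choose any nontrivial proper subgroup $K \lhd G$ and let $H_{i}^{(1)} \supset H_{i}$ be the preimage of $K$ in $\pi_{1}^{\rm t}(U_{X_{i}})$, so that $\pi_{1}^{\rm t}(U_{X_{i}})/H_{i}^{(1)} = G/K$ and $H_{i}^{(1)}/H_{i} = K$. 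Applying the inductive hypothesis to the cover $X_{H_{i}^{(1)}} \to X_{i}$ (Galois group $G/K$, of strictly smaller order) gives $(g_{X_{H_{1}^{(1)}}}, n_{X_{H_{1}^{(1)}}}) = (g_{X_{H_{2}^{(1)}}}, n_{X_{H_{2}^{(1)}}})$; identifying $H_{i}^{(1)}$ with $\pi_{1}^{\rm t}(U_{X_{H_{i}^{(1)}}})$ and applying the inductive hypothesis again to the further cover $X_{H_{i}} \to X_{H_{i}^{(1)}}$ with Galois group $K$, using the surjection $\phi|_{H_{1}^{(1)}}: H_{1}^{(1)} \twoheadrightarrow H_{2}^{(1)}$, yields the desired equality. (Note that $g_{X_{H_{i}^{(1)}}} \geq g_{X} \geq 2$ by Riemann--Hurwitz, so the hypothesis on the base curve persists.)

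The base case is $|G| = \ell$ for a prime $\ell \neq p$. Write $H_{2} = \ker \chi_{2}$ for a surjective character $\chi_{2}: \pi_{1}^{\rm t}(U_{X_{2}}) \twoheadrightarrow \mathbb{F}_{\ell}$ and put $\chi_{1} := \chi_{2} \circ \phi$, so $H_{1} = \ker \chi_{1}$. Set $N_{\chi_{i}} := \#\{x \in D_{X_{i}} : \chi_{i} \text{ is nontrivial on some inertia subgroup over } x\}$. By Riemann--Hurwitz, $g_{X_{H_{i}}}$ and $n_{X_{H_{i}}}$ are both explicit linear functions of $N_{\chi_{i}}$, so it suffices to show $N_{\chi_{1}} = N_{\chi_{2}}$. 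Proposition \ref{coro-p-average}(2) gives $g_{X_{H_{1}}} \geq g_{X_{H_{2}}}$, hence $N_{\chi_{1}} \geq N_{\chi_{2}}$. The upgrade to equality is obtained by summing this inequality over all $\chi \in {\rm Hom}(\pi_{1}^{\rm t}(U_{X_{2}}), \mathbb{F}_{\ell})$: since $\phi^{p'}$ is an isomorphism by \ref{sett331}, pullback along $\phi$ gives a bijection of such character groups, while a direct count shows that $\sum_{\chi} N_{\chi, i}$ depends only on $(g_{X}, n_{X}, \ell)$ (by counting, for each $x \in D_{X_{i}}$, the number of $\mathbb{F}_{\ell}$-characters nontrivial on the image of $I_{\widetilde{x}}$ in $\pi_{1}^{\rm t}(U_{X_{i}})^{\rm ab} \otimes \mathbb{F}_{\ell}$, which depends only on the dimension of that image -- forced by the standard structure of $\pi_{1}^{\rm t}(U_{X_{i}})^{\rm ab}$). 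Thus the termwise inequalities must all be equalities, in particular for our given $\chi_{2}$.

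The main obstacle is precisely this base case: one has to convert a one-sided Hasse--Witt inequality (the only form in which Proposition \ref{coro-p-average}(2) is available) into an equality, and this requires the global character-counting identity together with the a priori bijection of $\mathbb{F}_{\ell}$-characters coming from the isomorphism $\phi^{p'}$.
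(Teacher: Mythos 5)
Your proof is correct, but it takes a genuinely different route from the paper's. The paper makes no induction on $\#G$: it passes at once to $K_{2} := \ker\bigl(\pi_{1}^{\rm t}(U_{X_{2}}) \twoheadrightarrow \pi_{1}^{\rm t}(U_{X_{2}})^{\rm ab}\otimes\mbZ/m\mbZ\bigr)$ with $m:=\#G$ and $K_{1}:=\phi^{-1}(K_{2})$, for which equality of types is immediate from the $p$/prime-to-$p$ decomposition of the abelian cover; it then invokes Lemma \ref{lem-5} at the $K_{i}$-level (legitimate precisely because types now match) to obtain the $\phi$-compatible, $A$-equivariant identification of marked points, and reads off the ramification of $X_{K_{i}}\rightarrow X_{H_{i}}$ via stabilizers in $A:=\pi_{1}^{\rm t}(U_{X_{2}})^{\rm ab}\otimes\mbZ/m\mbZ$, concluding by Riemann--Hurwitz. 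You instead reduce via the $G=G^{p}\times G^{p'}$ split and an induction on $\#G$ to the cyclic case $G=\mbZ/\ell\mbZ$, $\ell\neq p$, and there upgrade the one-sided inequality $N_{\chi_{1}}\geq N_{\chi_{2}}$ (from Proposition \ref{coro-p-average}(2) together with Riemann--Hurwitz) to an equality by summing over the full character group: $\chi\mapsto\chi\circ\phi$ is a bijection because $\phi^{p'}$ is an isomorphism, while $\sum_{\chi}N_{\chi,i}=n_{X}\bigl(\ell^{2g_{X}+n_{X}-1}-\ell^{2g_{X}+n_{X}-2}\bigr)$ for $n_{X}\geq 2$ (and $0$ for $n_{X}=1$) is forced by the structure of the prime-to-$p$ abelianization independently of $i$, so the termwise inequalities are all equalities. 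Your route is more self-contained --- it avoids Lemma \ref{lem-5} and the mono-anabelian marked-point machinery entirely, resting only on the $p$-average inequality, the $\phi^{p'}$-isomorphism, and a counting identity in the spirit of Lemma \ref{lem-1} --- at the cost of an induction; the paper's route is uniform in $G$ and reuses Lemma \ref{lem-5}, which the section needs anyway. Both are correct.
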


\begin{proof}
We write $m$ for $\#G$ and put $K_{2}:= \text{ker}(\pi_{1}^{\rm t}(U_{X_{2}}) \twoheadrightarrow \pi_{1}^{\rm t}(U_{X_{2}})^{\rm ab}\otimes \mbZ/m\mbZ).$ Then we see immediately that $K_{2}$ is contained in $H_{2}$. Let $K_{1}:=\phi^{-1}(K_{2}) \subseteq H_{1}$. Write $(X_{K_{i}}, D_{X_{K_{i}}})$ for the smooth pointed stable curves of type $(g_{X_{K_{i}}}, n_{X_{K_{i}}})$ over $k_{i}$ induced by $K_{i}$ and $f_{K_{i}}: (X_{K_{i}}, D_{X_{K_{i}}}) \rightarrow (X_{i}, D_{X_{i}})$ for the tame covering over $k_{i}$ induced by the inclusion $K_{i}\hookrightarrow \pi_{1}^{\rm t}(U_{X_{i}})$. We identify $\pi_{1}^{\rm t}(U_{X_{1}})/K_{1}$ with $\pi_{1}^{\rm t}(U_{X_{2}})/K_{2}$ via the isomorphism induced by $\phi$, and denote by $A:= \pi_{1}^{\rm t}(U_{X_{1}})/K_{1} \simeq \pi_{1}^{\rm t}(U_{X_{2}})/K_{2}$.

Since each $p$-Galois tame covering is \'etale (i.e., Galois tame coverings whose Galois group is a $p$-group), we see immediately that $(g_{X_{K_{1}}}, n_{X_{K_{1}}})=(g_{X_{K_{2}}}, n_{X_{K_{2}}}).$ Then Lemma \ref{lem-5} implies that the commutative diagram
\[
\begin{CD}
K_{1} @>\phi|_{K_{1}}>>K_{2}
\\
@VVV@VVV
\\
\pi_{1}^{\rm t}(U_{X_{1}}) @>\phi>>\pi_{1}^{\rm t}(U_{X_{2}})
\end{CD}
\]
of profinite groups induces a commutative diagram
\[
\begin{CD}
D^{\rm gp}_{X_{K_{1}}} @>\rho_{\phi|_{K_{1}}}>>D^{\rm gp}_{X_{K_{2}}}
\\
@V\gamma_{K_{1}, \pi_{1}^{\rm t}(U_{X_{1}})}VV@VV\gamma_{K_{2}, \pi_{1}^{\rm t}(U_{X_{2}})}V
\\
D^{\rm gp}_{X_{1}} @>\rho_{\phi}>>D^{\rm gp}_{X_{2}}.
\end{CD}
\]
Moreover, Remark \ref{rem-lem-5-1} implies that the commutative diagram above admits a natural action of $A$. Then, for each $e^{\rm gp}_{X_{K_{1}}} \in D^{\rm gp}_{X_{K_{1}}}$, the inertia subgroup $I_{e^{\rm gp}_{X_{K_{1}}}}$ in $A$ associated to $e^{\rm gp}_{X_{K_{1}}}$ (i.e., the stabilizer of $e^{\rm gp}_{X_{K_{1}}}$ under the action of $A$) is equal to the inertia subgroup $I_{e^{\rm gp}_{X_{K_{2}}}}$ in $A$ associated to $e^{\rm gp}_{X_{K_{2}}}:= \rho_{\phi|_{K_{1}}}(e^{\rm gp}_{X_{K_{1}}}) \in D_{X_{K_{2}}}^{\rm gp}$. On the other hand, write $F$ for the kernel of the natural morphism $A \twoheadrightarrow G$ induced by the inclusion $K_{i} \hookrightarrow H_{i}$, $i \in \{1, 2\}.$ Since $(X_{H_{i}}, D_{X_{H_{i}}}) \simeq (X_{K_{i}}, D_{X_{K_{i}}})/F$, the set of ramification indices of the Galois tame covering $(X_{K_{i}}, D_{X_{K_{i}}})\rightarrow (X_{H_{i}}, D_{X_{H_{i}}})$ with Galois group $F$ are equal to $\{\#(F \cap I_{e^{\rm gp}_{X_{K_{i}}}})\}_{e^{\rm gp}_{X_{K_{i}}} \in D^{\rm gp}_{X_{K_{i}}}}$. Then by the Riemann-Hurwitz formula, we have $(g_{X_{H_{1}}}, n_{X_{H_{1}}})=(g_{X_{H_{2}}}, n_{X_{H_{2}}}).$ This completes the proof of the lemma.
\end{proof}

Next, we treat the general case.

\begin{lemma}\label{lem-7}
We maintain the notation introduced in \ref{sec334}. Suppose that $g_{X}\geq2$ and $n_{X}\geq2$. Then there exists an open normal subgroup $P_{2} \subseteq \pi_{1}^{\rm t}(U_{X_{2}})$ which is contained in $H_{2}$ such that the following holds:
\begin{quote}
Write $(X_{P_{i}}, D_{X_{P_{i}}})$, $i \in \{1, 2\}$, for the smooth pointed stable curve of type $(g_{X_{P_{i}}}, n_{X_{P_{i}}})$ over $k_{i}$ induced by $P_{i}$, where $P_{1}=\phi^{-1}(P_{2})$.  We have $(g_{X_{P_{1}}}, n_{X_{P_{1}}})=(g_{X_{P_{2}}}, n_{X_{P_{2}}}).$
\end{quote}
\end{lemma}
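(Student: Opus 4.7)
The plan is to reduce to the abelian case handled by Lemma \ref{lem-6} by intersecting $H_2$ with a sufficiently deep abelian-quotient subgroup. I would fix a positive integer $m$ coprime to $p$ and divisible by the prime-to-$p$ part of $|G|$, set
$$K_2 := \ker\!\bigl(\pi_1^{\rm t}(U_{X_2}) \twoheadrightarrow \pi_1^{\rm t}(U_{X_2})^{\rm ab} \otimes \mbZ/m\mbZ\bigr),$$
and take $P_2 := K_2 \cap H_2$; this is an open normal subgroup of $\pi_1^{\rm t}(U_{X_2})$ contained in $H_2$. Writing $K_1 := \phi^{-1}(K_2)$ and $P_1 := \phi^{-1}(P_2) = K_1 \cap H_1$, the fact that $\phi^{p'}$ is an isomorphism and $m$ is coprime to $p$ ensures that $K_1$ is likewise the kernel of $\pi_1^{\rm t}(U_{X_1}) \twoheadrightarrow \pi_1^{\rm t}(U_{X_1})^{\rm ab} \otimes \mbZ/m\mbZ$, and $\phi$ induces a canonical identification of the abelian quotients $A := \pi_1^{\rm t}(U_{X_i})/K_i$.

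The first step is to apply Lemma \ref{lem-6} with $K_2$ playing the role of $H_2$: since $A$ is abelian, we obtain $(g_{X_{K_1}}, n_{X_{K_1}}) = (g_{X_{K_2}}, n_{X_{K_2}})$. Next I would analyze the intermediate cover $(X_{P_i}, D_{X_{P_i}}) \to (X_{K_i}, D_{X_{K_i}})$, which is Galois with group $K_i/(K_i \cap H_i) \cong K_iH_i/H_i$, the same subgroup of $G$ for $i \in \{1, 2\}$ under $\phi$. The heart of the argument is to show this cover is \emph{\'etale}. For $e \in D_{X_{K_i}}$ lying over $\tilde e \in D_{X_i}$, the inertia in $K_i$ is $I_e^{K_i} = I_{\tilde e}^{(i)} \cap K_i$, where $I_{\tilde e}^{(i)} \subseteq \pi_1^{\rm t}(U_{X_i})$ is the inertia at $\tilde e$. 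Picking a topological generator $\tau$ of the procyclic group $I_{\tilde e}^{(i)} \cong \widehat{\mbZ}(1)^{p'}$, and writing $d_{\tilde e}^{(i)}$ (resp.\ $|a_{\tilde e}^{(i)}|$) for the order of its image in $G$ (resp.\ in $A$), a direct calculation shows that $I_e^{K_i}$ is topologically generated by $\tau^{|a_{\tilde e}^{(i)}|}$, whose image in $G$ has order $d_{\tilde e}^{(i)}/\gcd(d_{\tilde e}^{(i)}, |a_{\tilde e}^{(i)}|)$. Once this is seen to be $1$ at every marked point, the Riemann--Hurwitz formula applied to the \'etale cover $X_{P_i} \to X_{K_i}$ yields $n_{X_{P_i}} = [K_i : P_i]\, n_{X_{K_i}}$ and $g_{X_{P_i}} - 1 = [K_i : P_i]\,(g_{X_{K_i}} - 1)$, so the equality of types for $P_1, P_2$ descends from that for $K_1, K_2$.

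The main obstacle, and the precise place where the hypothesis $n_X \geq 2$ is needed, is the verification that $|a_{\tilde e}^{(i)}| = m$. This reduces to a structural fact about the prime-to-$p$ abelianization
$$\pi_1^{\rm t}(U_{X_i})^{\rm ab, p'} \cong (\widehat{\mbZ}^{p'})^{2g_X} \oplus \bigl((\widehat{\mbZ}^{p'})^{n_X}/(c_1 + \cdots + c_{n_X} = 0)\bigr),$$
namely that each inertia generator $c_j$ has order exactly $m$ in the reduction modulo $m$. When $n_X = 1$ this clearly fails (the single $c_1$ vanishes), but for $n_X \geq 2$ the relation $m c_j = k(1, \dots, 1)$ inside the inertia lattice forces $k = 0$ by comparing coordinates different from $j$, and hence $m = 0$, confirming the claim. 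Once $|a_{\tilde e}^{(i)}| = m$, the fact that $d_{\tilde e}^{(i)}$ divides $|G|$ and is coprime to $p$ (as tame inertia) gives $d_{\tilde e}^{(i)} \mid m$, so the image in $G$ is trivial, the cover $X_{P_i} \to X_{K_i}$ is \'etale, and the lemma follows.
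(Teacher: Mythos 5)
Your proof is correct, and it takes a genuinely different route from the paper's.

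The paper's argument is a two-stage dévissage: first it reduces to the case where $G$ is a finite simple group via a composition series; then, for non-abelian simple $G$, it constructs (after an auxiliary base change designed to make $n_X$ even) a cyclic prime-to-$p$ cover $(Y_2, D_{Y_2}) \to (X_2, D_{X_2})$ totally ramified at every marked point, invokes Lemma \ref{lem-6} to match its type on both sides, and then applies Abhyankar's lemma to the fibre product $(X_{H_i}, D_{X_{H_i}}) \times_{(X_i,D_{X_i})} (Y_i, D_{Y_i})$ to produce an \'etale cover of $Y_i$. Your argument collapses all of this into a single, uniform application of Abhyankar's lemma: you take the mod-$m$ abelianization cover $K_i$, with $m$ prime to $p$ and divisible by the prime-to-$p$ part of $\#G$, observe via the presentation of $\pi_1^{\rm t}(U_{X_i})^{\rm ab,p'}$ that each inertia generator has order exactly $m$ in $\pi_1^{\rm t}(U_{X_i})/K_i$ (this is precisely where $n_X \geq 2$ enters, exactly as you flagged), and then conclude that the inertia at each marked point of $X_{K_i}$ already lies in $H_i$, so the cover $X_{P_i} \to X_{K_i}$ with $P_i = K_i \cap H_i$ is \'etale. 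This bypasses the composition-series reduction, the ``make $n_X$ even'' manoeuvre (and its separate $p = 2$ case), and the construction of a totally-ramified cyclic cover; what you lose is that the paper's auxiliary constructions are re-used elsewhere, whereas your $P_2$ is a more ad hoc subgroup of $H_2$. Both proofs produce a $P_2$ contained in $H_2$ rather than $P_2 = H_2$, which is all the lemma requires. One cosmetic remark: the sentence ``the relation $m c_j = k(1,\dots,1)$ \dots forces $k = 0$ \dots and hence $m = 0$'' has garbled notation (you presumably mean: if $a c_j = k(1,\dots,1)$ in $(\mbZ/m\mbZ)^{n_X}$ then comparing a coordinate $\neq j$ gives $k = 0$, so $a c_j = 0$ and $m \mid a$); the intended argument is clear and correct, but it should be cleaned up.
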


\begin{proof}
First, suppose that $G$ is a {\it simple} finite group. By applying Lemma \ref{lem-6}, we may assume that $G$ is {\it non-abelian}. Moreover, we claim that we may assume that $n_{X}$ is a positive {\it even} number. Let us prove this claim. Suppose $p\neq 2$. Let $R_{2} \subseteq \pi_{1}^{\rm t}(U_{X_{2}})$ be an open subgroup such that $\#(\pi_{1}^{\rm t}(U_{X_{2}})/R_{2})=2$, and that $R_{2} \supseteq \text{ker}(\pi_{1}^{\rm t}(U_{X_{2}}) \twoheadrightarrow \pi_{1}(X_{2}))$ (i.e., the cyclic Galois tame covering corresponding to $R_{2}$ is \'etale). Let $R_{1} := \phi^{-1}(R_{2}) \subseteq \pi_{1}^{\rm t}(U_{X_{1}})$. Then we have that $\#(\pi_{1}^{\rm t}(U_{X_{1}})/R_{1})=2$, and that Lemma \ref{lem-3} implies $R_{1} \supseteq \text{ker}(\pi_{1}^{\rm t}(U_{X_{1}}) \twoheadrightarrow \pi_{1}(X_{1}))$. By replacing $H_{i}$ and $\pi_{1}^{\rm t}(U_{X_{i}})$, $i\in\{1, 2\}$, by $H_{i} \cap R_{i}$ and $R_{i}$, respectively, we may assume that $n_{X}$ is a even positive number. Suppose that $p=2$. Let $\ell$ be a prime number such that $(\ell, 2)=(\ell, \#G)=1$. By \cite[Th\'eor\`eme 4.3.1]{R1}, there exists an open subgroup $R^{*}_{2} \subseteq \pi_{1}^{\rm t}(U_{X_{2}})$ such that $\#(\pi_{1}^{\rm t}(U_{X_{2}})/R^{*}_{2})=\ell$, that $R^{*}_{2} \supseteq \text{ker}(\pi_{1}^{\rm t}(U_{X_{2}}) \twoheadrightarrow \pi_{1}(X_{2}))$, and that $$\text{dim}_{\mbF_{p}}(R_{2}^{*, \rm ab} \otimes \mbF_{p}) >0.$$ Let $R^{*}_{1} := \phi^{-1}(R^{*}_{2}) \subseteq \pi_{1}^{\rm t}(U_{X_{1}})$. Then we have that $\#(\pi_{1}^{\rm t}(U_{X_{1}})/R^{*}_{1})=\ell$, that $\text{dim}_{\mbF_{p}}(R_{1}^{*, \rm ab} \otimes \mbF_{p}) >0$, and that Lemma \ref{lem-3} implies $R^{*}_{1} \supseteq \text{ker}(\pi_{1}^{\rm t}(U_{X_{1}}) \twoheadrightarrow \pi_{1}(X_{1}))$. Thus, we may take an open subgroup $R'_{2} \subseteq R^{*}_{2}$ such that $$\pi_{1}^{\rm t}(U_{X_{2}})/R'_{2} \simeq \mbZ/2\mbZ \rtimes \mbZ/\ell\mbZ,$$ and that $R'_{2} \supseteq \text{ker}(\pi_{1}^{\rm t}(U_{X_{2}}) \twoheadrightarrow\pi_{1}(X_{2}))$. We put $R_{1}' := \phi^{-1}(R_{2}')$. Then the construction of $R_{1}'$ implies $\pi_{1}^{\rm t}(U_{X_{1}})/R'_{1} \simeq \mbZ/2\mbZ \rtimes \mbZ/\ell\mbZ$ and $R'_{1} \supseteq \text{ker}(\pi_{1}^{\rm t}(U_{X_{1}}) \twoheadrightarrow\pi_{1}(X_{1}))$. By replacing $H_{i}$ and $\pi_{1}^{\rm t}(U_{X_{i}})$, $i\in\{1, 2\}$, by $H_{i} \cap R'_{i}$ and $R'_{i}$, respectively, we may assume that $n_{X}$ is a even positive number. This completes the proof of the claim.

Let $\#G:= p^{t}m'$ such that $(m', p)=1$. Since $n_{X}$ is a positive {\it even} number, we may choose a Galois tame covering $$f_{2}: (Y_{2}, D_{Y_{2}}) \rightarrow (X_{2}, D_{X_{2}})$$ over $k_{2}$ with Galois group $\mbZ/m'\mbZ$ such that $f_{2}$ is totally ramified over every marked point of $D_{X_{2}}$. Write $(g_{Y_{2}}, n_{Y_{2}})$ for the type of $(Y_{2}, D_{Y_{2}})$, $Q_{2} \subseteq \pi_{1}^{\rm t}(U_{X_{2}})$ for the open normal subgroup induced by $f_{2}$, $Q_{1}:= \phi^{-1}(Q_{2}) \subseteq \pi_{1}^{\rm t}(U_{X_{1}})$, $$f_{1}: (Y_{1}, D_{Y_{1}})\rightarrow (X_{1}, D_{X_{1}})$$ for the Galois tame covering over $k_{1}$ with Galois group $\mbZ/m'\mbZ$ induced by the natural inclusion $Q_{1} \hookrightarrow \pi_{1}^{\rm t}(U_{X_{1}})$, and $(g_{Y_{1}}, n_{Y_{1}})$ for the type of $(Y_{1}, D_{Y_{1}})$. Then Lemma \ref{lem-6} implies that $(g_{Y_{1}}, n_{Y_{1}})=(g_{Y_{2}}, n_{Y_{2}})$ and $f_{1}$ is also totally ramified over every marked point of $D_{X_{1}}$.

We consider the Galois tame covering $$(Z_{i}, D_{Z_{i}}) := (X_{H_i}, D_{X_{H_{i}}}) \times_{(X_{i}, D_{X_{i}})} (Y_{i}, D_{Y_{i}}) \rightarrow (X_{i}, D_{X_{i}}), \ i \in \{1, 2\},$$ over $k_{i}$ with Galois group $G \times \mbZ/m'\mbZ$ which is the composition of $(Z_{i}, D_{Z_{i}}) \rightarrow (Y_{i}, D_{Y_{i}})$ and $(Y_{i}, D_{Y_{i}}) \rightarrow (X_{i}, D_{X_{i}})$. Note that since $G$ is a non-abelian simple finite group, $(Z_{i}, D_{Z_{i}})$ is connected. Moreover, by Abhyankar's lemma, we obtain that $(Z_{i}, D_{Z_{i}}) \rightarrow (Y_{i}, D_{Y_{i}})$ is an \'etale covering over $k_{i}$. Since $(g_{Y_{1}}, n_{Y_{1}})=(g_{Y_{2}}, n_{Y_{2}})$ and $(Z_{i}, D_{Z_{i}}) \rightarrow (Y_{i}, D_{Y_{i}})$ is unramified, the Riemann-Hurwitz formula implies  $(g_{Z_{1}}, n_{Z_{1}})=(g_{Z_{2}}, n_{Z_{2}}).$

Next, let us prove the lemma in the case where $G$ is an arbitrary finite group. Let $G_{1} \subseteq G_{2} \subseteq \dots \subseteq G_{n}:= G$ be a sequence of subgroups of $G$ such that $G_{i}/G_{i-1}$ is a simple group for all $i\in\{2, \dots n\}$. In order to verify the lemma, we see that it is sufficient to prove the lemma when $n=2$. Let $N_{2}$ be the kernel of the natural homomorphism $\pi_{1}^{\rm t}(U_{X_{2}}) \twoheadrightarrow G \twoheadrightarrow G_{1}$ and $N_{1} := \phi^{-1}(N_{2})$. Then by replacing $G$ by $G_{1}$ and by applying the lemma for the simple group $G_{1}$, we obtain an open normal subgroup $M_{2} \subseteq \pi_{1}^{\rm t}(U_{X_{2}})$ which is contained in $N_{2}$ such that $(g_{X_{M_{1}}}, n_{X_{M_{1}}})=(g_{X_{M_{2}}}, n_{X_{M_{2}}}),$ where $M_{1} := \phi^{-1}(M_{2})$, and $(g_{X_{M_{i}}}, n_{X_{M_{i}}})$, $i\in \{1, 2\}$, denotes the type of the smooth pointed stable curve corresponding to $M_{i}$.

If $M_{i} \subseteq H_{i}$, $i\in \{1, 2\}$, then we may put $P_{i} :=M_{i}.$ If $H_{i}$, $i\in \{1, 2\}$, does not contain $M_{i}$,  we put $O_{i} := M_{i} \cap H_{i}$. Then we have $M_{i}/O_{i}\simeq G/G_{1}.$ Note that $G/G_{1}$ is a simple group. Then the lemma follows from the lemma when we replace $(X_{i}, D_{X_{i}})$ and $G$ by $(X_{M_{i}}, D_{X_{M_{i}}})$ and the simple group $G/G_{1}$, respectively. This completes the proof of the lemma.
\end{proof}

\subsubsection{}
Now, we prove the main result of the present section.

\begin{theorem}\label{them-2}
Let $(\widetilde X_{i}, D_{\widetilde X_{i}}),$ $i \in \{1, 2\},$ be the universal tame covering of $(X_{i}, D_{X_{i}})$ defined in \ref{unicov313}. Let $\phi: \pi_{1}^{\rm t}(U_{X_{1}}) \twoheadrightarrow\pi_{1}^{\rm t}(U_{X_{2}})$ be an arbitrary open continuous surjective homomorphism. Then the group-theoretical algorithm of the mono-anabelian reconstruction concerning ${\rm Ine}(\pi_{1}^{\rm t}(U_{X_{i}}))$  obtained in Proposition \ref{them-1} is compatible with the surjection $\phi: \pi_{1}^{\rm t}(U_{X_{1}}) \twoheadrightarrow \pi_{1}^{\rm t}(U_{X_{2}})$. Namely, the following holds: Let $\widetilde e_{2} \in D_{\widetilde X_{2}}$ and $I_{\widetilde e_{2}} \in {\rm Ine}(\pi_{1}^{\rm t}(U_{X_{2}}))$ the inertia subgroup associated to $\widetilde e_{2}$. Then there exists an inertia subgroup $I_{\widetilde e_{1}} \in {\rm Ine}(\pi_{1}^{\rm t}(U_{X_{1}}))$ associated to a point $\widetilde e_{1} \in D_{\widetilde X_{1}}$ such that $$\phi(I_{\widetilde e_{1}})=I_{\widetilde e_{2}},$$ and that the restriction homomorphism $\phi|_{I_{\widetilde e_{1}}}: I_{\widetilde e_{1}} \twoheadrightarrow I_{\widetilde e_{2}}$ is an isomorphism.

\end{theorem}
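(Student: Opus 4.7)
The plan is to characterize $I_{\widetilde e_{2}}$ via the cofinal-system construction of Proposition \ref{them-1} and propagate it along $\phi$ using Lemmas \ref{lem-5} and \ref{lem-7}. First I would choose a cofinal system $\{H_{2,i}\}_{i \in \mbZ_{>0}}$ of open normal subgroups of $\pi_{1}^{\rm t}(U_{X_{2}})$ and set $H_{1,i}:=\phi^{-1}(H_{2,i})$. Since Proposition \ref{them-1} permits any cofinal refinement and Proposition \ref{proposition 1} (1) lets me compute the types $(g_{X_{H_{2,i}}}, n_{X_{H_{2,i}}})$ from $H_{2,i}$, I may pass to a subsystem along which $g_{X_{H_{2,i}}} \geq 2$ and $n_{X_{H_{2,i}}} \geq 2$. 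Applying Lemma \ref{lem-7} iteratively---replacing each $H_{2,i}$ by a deeper open normal subgroup $P_{2,i} \subseteq H_{2,i}$ provided by the lemma and intersecting within the tower to preserve cofinality---I may further arrange that $(g_{X_{H_{1,i}}}, n_{X_{H_{1,i}}}) = (g_{X_{H_{2,i}}}, n_{X_{H_{2,i}}})$ for every $i$.

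Next, for each $i$ I would apply Lemma \ref{lem-5} to the square $H_{1,i} \hookrightarrow \pi_{1}^{\rm t}(U_{X_{1}}) \xrightarrow{\phi} \pi_{1}^{\rm t}(U_{X_{2}}) \hookleftarrow H_{2,i}$, obtaining a bijection $\rho_{\phi|_{H_{1,i}}} : D^{\rm gp}_{X_{H_{1,i}}} \xrightarrow{\simeq} D^{\rm gp}_{X_{H_{2,i}}}$ which, by Remark \ref{rem-lem-5-1}, is equivariant for the identified quotient $G_{i} := \pi_{1}^{\rm t}(U_{X_{1}})/H_{1,i} \cong \pi_{1}^{\rm t}(U_{X_{2}})/H_{2,i}$. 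Applying Lemma \ref{lem-5} also to each nested pair $H_{1,i+1} \subseteq H_{1,i}$ yields compatibility of these bijections with the transition maps on marked points induced by $H_{j,i+1} \subseteq H_{j,i}$. Starting from the sequence $\mcI^{C_{2}}_{\widetilde e_{2}^{\rm gp}} : \cdots \mapsto e^{\rm gp}_{X_{H_{2,2}}} \mapsto e^{\rm gp}_{X_{H_{2,1}}}$ determined by $\widetilde e_{2}$ via Proposition \ref{them-1}, I set $e^{\rm gp}_{X_{H_{1,i}}} := \rho_{\phi|_{H_{1,i}}}^{-1}(e^{\rm gp}_{X_{H_{2,i}}})$; the compatibility just established makes this an inverse system, so by Proposition \ref{them-1} it corresponds to a well-defined point $\widetilde e_{1} \in D_{\widetilde X_{1}}$ and an inertia subgroup $I_{\widetilde e_{1}}$. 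The $G_{i}$-equivariance of $\rho_{\phi|_{H_{1,i}}}$ then shows that if $g \in I_{\widetilde e_{1}}$ fixes every $e^{\rm gp}_{X_{H_{1,i}}}$, its image $\phi(g)$ fixes every $e^{\rm gp}_{X_{H_{2,i}}}$, whence $\phi(I_{\widetilde e_{1}}) \subseteq I_{\widetilde e_{2}}$.

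The main obstacle will be upgrading this inclusion to an isomorphism $\phi|_{I_{\widetilde e_{1}}} : I_{\widetilde e_{1}} \xrightarrow{\simeq} I_{\widetilde e_{2}}$. Injectivity comes for free because both inertia groups are isomorphic to $\widehat{\mbZ}(1)^{p'}$ and have trivial $p$-part, so they embed into the maximal prime-to-$p$ quotients, on which $\phi^{p'}$ is an isomorphism by \ref{sett331}. For surjectivity, the key is that the $G_{i}$-equivariance of $\rho_{\phi|_{H_{1,i}}}$ combined with the matching of types from Lemma \ref{lem-7} forces the stabilizers of $e^{\rm gp}_{X_{H_{1,i}}}$ and $e^{\rm gp}_{X_{H_{2,i}}}$ in $G_{i}$ to have equal cardinality; since these stabilizers are precisely the images of $I_{\widetilde e_{1}}$ and $I_{\widetilde e_{2}}$ in $G_{i}$ (i.e., the ramification indices at the marked points), the induced injective map $I_{\widetilde e_{1}}/(I_{\widetilde e_{1}} \cap H_{1,i}) \hookrightarrow I_{\widetilde e_{2}}/(I_{\widetilde e_{2}} \cap H_{2,i})$ is actually a bijection for every $i$. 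Since $\phi(I_{\widetilde e_{1}})$ is closed in $I_{\widetilde e_{2}}$ and the intersection $\bigcap_{i} (I_{\widetilde e_{2}} \cap H_{2,i})$ is trivial by cofinality, this level-by-level matching forces $\phi(I_{\widetilde e_{1}}) = I_{\widetilde e_{2}}$, completing the plan.
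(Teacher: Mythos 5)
Your overall plan is the right one and matches the structure of the paper's proof: transport the sequence of marked points that encodes $\widetilde e_{2}$ backwards along $\phi$, level by level, using Lemma \ref{lem-5} and Remark \ref{rem-lem-5-1}, then assemble an inverse limit and identify the resulting stabilizer. Your closing argument for $\phi(I_{\widetilde e_{1}})=I_{\widetilde e_{2}}$ --- that the $G_i$-equivariant bijections $\rho_{\phi|_{H_{1,i}}}$ force the images of $I_{\widetilde e_{1}}$ and $I_{\widetilde e_{2}}$ in each $G_i$ to coincide, and then cofinality plus compactness of $\phi(I_{\widetilde e_{1}})$ gives equality --- is different from the paper (which instead passes through the intermediate quotient $\widetilde X_{1,\ker(\phi)}$ and an orbit-stabilizer computation), but it is valid.

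However, there is a genuine bootstrapping gap in the first step. Both Lemma \ref{lem-5} and Lemma \ref{lem-7} carry the standing hypotheses from \ref{sett331} that $(X_{1},D_{X_{1}})$ and $(X_{2},D_{X_{2}})$ have the \emph{same} type, and in addition require $g_{X}\geq 2$ (and, for Lemma \ref{lem-7}, $n_{X}\geq 2$) for the \emph{base} curve. Theorem \ref{them-2} is stated for an arbitrary type $(g_X,n_X)$, and the applications in Section \ref{sec-5} are exactly for $g_X=0$. You propose to ``pass to a subsystem along which $g_{X_{H_{2,i}}}\geq 2$ and $n_{X_{H_{2,i}}}\geq 2$'' --- but this only concerns the covering curves on the $X_{2}$-side. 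In order to apply Lemma \ref{lem-7} to $H_{2,i}$ you must either (a) have $g_X\geq 2$, $n_X\geq 2$ on the original base, which may fail, or (b) re-base to a covering level $(X_{H_{i,1}},D_{X_{H_{i,1}}})$, which is only legitimate once you already know $(g_{X_{H_{1,1}}},n_{X_{H_{1,1}}})=(g_{X_{H_{2,1}}},n_{X_{H_{2,1}}})$ --- precisely the kind of equality Lemma \ref{lem-7} is supposed to produce. This circularity is broken in the paper by taking the first level to be $K_{2}=\ker\bigl(\pi_{1}^{\rm t}(U_{X_{2}})\twoheadrightarrow \pi_{1}^{\rm t}(U_{X_{2}})^{\rm ab}\otimes\mbZ/m\mbZ\bigr)$ with $(m,p)=1$ and $m\gg 0$: since $\phi^{p'}$ is an isomorphism and the ramification of this particular abelian prime-to-$p$ covering is automatically total at every marked point, one gets $(g_{X_{K_{1}}},n_{X_{K_{1}}})=(g_{X_{K_{2}}},n_{X_{K_{2}}})\geq (2,2)$ without any appeal to Lemma \ref{lem-7}. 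Your proposal omits this base case entirely, so the iterative application of Lemma \ref{lem-7} never gets started in the cases of interest. A secondary but related imprecision: when you ``intersect within the tower to preserve cofinality,'' the intersections need not again have matching types; the paper instead builds the cofinal system inductively, applying Lemma \ref{lem-7} with the base shifted down to the previous level each time. You should also be aware that ``$I_{\widetilde e_{1}}$ has trivial $p$-part, so it embeds into $\pi_{1}^{\rm t}(U_{X_{1}})^{p'}$'' is not formally immediate (the kernel of $G\to G^{p'}$ is not in general pro-$p$); the injectivity of inertia into the $p'$-quotient is true, but for the geometric reason that one can realize arbitrary prime-to-$p$ ramification at a given marked point, or alternatively one can simply invoke, as the paper does, that a continuous surjection of the Hopfian group $\widehat\mbZ(1)^{p'}$ onto itself is an isomorphism once $\phi(I_{\widetilde e_{1}})=I_{\widetilde e_{2}}$ is known.
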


\begin{proof}

If $n_{X}=0$, then the theorem is trivial. We suppose $n_{X}>0$. Let $m>>0$ be an integer such that $(m, p)=1$. We put $K_{i}:= \text{ker}(\pi_{1}^{\rm t}(U_{X_{i}}) \twoheadrightarrow \pi_{1}^{\rm t}(U_{X_{i}})^{\rm ab}\otimes\mbZ/m\mbZ)$, $i \in \{1, 2\}$. Write $(X_{K_{i}}, D_{K_{i}})$ for the smooth pointed stable curve of type $(g_{X_{K_{i}}}, n_{X_{K_{i}}})$ over $k_{i}$ induced by $K_{i}$. Moreover, the condition $m>>0$ implies  $g_{X_{K_{1}}}=g_{X_{K_{2}}} \geq 2, \ n_{X_{K_{1}}}=n_{X_{K_{2}}}\geq 2.$

By applying Lemma \ref{lem-7}, we may choose a set of open subgroups $C_{X_{2}}:= \{H_{2, j}\}_{j \in \mbZ_{> 0}}$ of $\pi_{1}^{\rm t}(U_{X_{2}})$ such that the following three conditions hold:
 \begin{itemize}
   \item $H_{2, 1}=K_{2}$;
   \item  $\varprojlim_{j}\pi_{1}^{\rm t}(U_{X_{2}})/H_{2, j} \simeq \pi_{1}^{\rm t}(U_{X_{2}})$ (i.e. $C_{X_{2}}$ is a cofinal system);
   \item write $\{H_{1, j}:= \phi^{-1}(H_{2, j})\}_{j \in \mbZ_{> 0}}$ for the set of open subgroups of $\pi_{1}^{\rm t}(U_{X_{1}})$ induced by $\phi$, and for each $j \in \mbZ_{> 0}$, write $(X_{H_{i, j}}, D_{X_{H_{i, j}}})$, $i\in \{1, 2\}$, for the smooth pointed stable curve of type $(g_{X_{H_{i,j}}}, n_{X_{H_{i,j}}})$ over $k_{i}$ induced by $H_{i, j}$, then we have $(g_{X_{H_{1, j}}}, n_{X_{H_{1, j}}})=(g_{X_{H_{2, j}}}, n_{X_{H_{2, j}}})$.
 \end{itemize}

For each $j \in \mbZ_{> 0}$, we write $e_{X_{H_{2, j}}} \in D_{X_{H_{2, j}}}$ for the image of $\widetilde e_{2}$. Then we obtain a sequence of marked points
$$\mcI_{\widetilde e_{2}}^{C_{X_{2}}}: \dots \mapsto e_{H_{2, 2}}\mapsto e_{H_{2, 1}}.$$ Proposition \ref{pro-3} implies that, for each $H_{2, j}$, $j\in \mbZ_{> 0}$, the set $D^{\rm gp}_{X_{H_{2, j}}}$ can be mono-anabelian reconstructed from $H_{2, j}$. For each $e_{X_{H_{2, j}}} \in D_{X_{H_{2, j}}}$, we denote by $$e^{\rm gp}_{X_{H_{2, j}}}:= \vartheta_{X_{H_{2, j}}}^{-1}(e_{X_{H_{2, j}}}).$$ Then the sequence of marked points $\mcI^{C_{X}}_{\widetilde e_{2}}$ induces a sequence $$\mcI^{C_{X}}_{\widetilde e_{2}^{\rm gp}}: \dots \mapsto e^{\rm gp}_{X_{H_{2, 2}}}\mapsto e^{\rm gp}_{X_{H_{2,1}}}.$$ Then Remark \ref{rem-pro-3-1} implies that the inertia subgroup associated to $\widetilde e_{2}$ is equal to the stabilizer of $\mcI^{C_{X}}_{\widetilde e_{2}^{\rm gp}}$.

By Lemma \ref{lem-5} and Lemma \ref{lem-7}, $\mcI_{\widetilde e^{\rm gp}_{2}}^{C_{X_{2}}}$ induces a sequence as follows: $$\dots \mapsto e^{\rm gp}_{X_{H_{1, 2}}}:=\rho^{-1}_{\phi|_{H_{1, 2}}}(e^{\rm gp}_{X_{H_{2, 2}}}) \in D^{\rm gp}_{X_{H_{1,2}}}\mapsto e^{\rm gp}_{X_{H_{1, 1}}}:=\rho^{-1}_{\phi|_{H_{1, 1}}}(e^{\rm gp}_{X_{H_{2, 1}}})\in D^{\rm gp}_{X_{H_{1, 1}}}$$ with an action of $I_{\widetilde e_{2}}$. Then Proposition \ref{them-1} implies that we have a sequence $$\dots \mapsto e_{X_{H_{1, 2}}}:= \vartheta_{X_{H_{1, 2}}}(e^{\rm gp}_{X_{H_{1, 2}}})\in D_{X_{H_{1,2}}} \mapsto e_{X_{H_{1, 1}}}:=  \vartheta_{X_{H_{1, 1}}}(e^{\rm gp}_{X_{H_{1, 1}}})\in D_{X_{H_{1,1}}}$$ with an action of $I_{\widetilde e_{2}}$

Let $K_{\text{ker}(\phi)}$ be the subfield of $\widetilde K$ induced by the closed subgroup $\text{ker}(\phi)$ of $\pi_{1}^{\rm t}(U_{X_{1}})$, $\widetilde X_{1, \text{ker}(\phi)}$ the normalization of $X_{1}$ in $K_{\text{ker}(\phi)}$, and $D_{\widetilde X_{1, \text{ker}(\phi)}}$  the inverse image of $D_{X_{1}}$ in $\widetilde X_{1, \text{ker}(\phi)}$. Then the sequence
$$\dots \mapsto e_{X_{H_{1, 2}}} \mapsto e_{X_{H_{1, 1}}}.$$  determines a point $\widetilde e_{1, \text{ker}(\phi)} \in D_{\widetilde X_{1, \text{ker}(\phi)}}.$ We choose a point of $\widetilde e_{1} \in D_{\widetilde X_{1}}$ such that the image of $\widetilde e_{1}$ in $D_{\widetilde X_{1, \text{ker}(\phi)}}$ is $\widetilde e_{1, \text{ker}(\phi)}$. Then we have $\phi(I_{\widetilde e_{1}})=I_{\widetilde e_{2}}$. Moreover, since $I_{\widetilde e_{1}}$ and $I_{\widetilde e_{2}}$ are isomorphic to $\widehat \mbZ(1)^{p'}$, the restriction homomorphism $\phi|_{I_{\widetilde e_{1}}}$ is an isomorphism. This completes the proof of the theorem.
\end{proof}


\subsection{Reconstructions of additive structures via surjections}\label{sec-new6}
We maintain the settings introduced in \ref{sett331}.


\subsubsection{}
Let $\widetilde e_{2}$ be an arbitrary point of $D_{\widetilde X_{2}}$. By applying Theorem \ref{them-2}, there exists a point $\widetilde e_{1}\in D_{\widetilde X_{1}}$ such that $\phi|_{I_{\widetilde e_{1}}}: I_{\widetilde e_{1}} \xrightarrow{\simeq} I_{\widetilde e_{2}}$ is an isomorphism. Write $\overline \mbF_{p, i}, \ i \in \{1, 2\},$ for the algebraic closure of $\mbF_{p}$ in $k_{i}$. We put $$\mbF_{\widetilde e_{i}}:= (I_{\widetilde e_{i}}\otimes_{\mbZ} (\mbQ/\mbZ)_{i}^{p'}) \sqcup\{*_{\widetilde e_{i}}\}, \ \ i \in \{1, 2\},$$ where $\{*_{\widetilde e_{i}}\}$ is an one-point set, and $(\mbQ/\mbZ)_{i}^{p'}$ denotes the prime-to-$p$ part of $\mbQ/\mbZ$ which can be canonically identified with $\bigcup_{(p, m)=1}{\mathbb{\mu}}_{m}(k_{i}).$ Moreover, let $a_{\widetilde e_{i}}$ be a generator of $I_{\widetilde e_{i}}$. Then we have a natural bijection $$I_{\widetilde e_{i}}\otimes_{\mbZ} (\mbQ/\mbZ)_{i}^{p'} \xrightarrow{\simeq}\mbZ \otimes_{\mbZ}(\mbQ/\mbZ)_{i}^{p'}, \ a_{\widetilde e_{i}} \otimes 1 \mapsto 1\otimes 1.$$ Thus, we obtain the following bijections
$$I_{\widetilde e_{i}}\otimes_{\mbZ} (\mbQ/\mbZ)_{i}^{p'} \xrightarrow{\simeq}\mbZ \otimes_{\mbZ}  (\mbQ/\mbZ)_{i}^{p'} \xrightarrow{\simeq} \bigcup_{(p, m)=1}{\mathbb{\mu}}_{m}(k_{i}) \xrightarrow{\simeq} \overline \mbF_{p, i}^{\times}.$$ This means that
$\mbF_{\widetilde e_{i}}$ can be identified with $\overline \mbF_{p, i}$ as sets, and hence admits a structure of field whose multiplicative group is $I_{\widetilde e_{i}}\otimes_{\mbZ} (\mbQ/\mbZ)^{p'}_{i}$, and whose zero element is $*_{\widetilde e_{i}}$.

\subsubsection{}
We will  prove that $\phi|_{I_{\widetilde e_{1}}}: I_{\widetilde e_{1}}\xrightarrow{\simeq}I_{\widetilde e_{2}}$ induces an isomorphism $\mbF_{\widetilde e_{1}}\xrightarrow{\simeq} \mbF_{\widetilde e_{2}}$ {\it as fields} (i.e. Proposition \ref{pro-4}). The main idea is as follows: First, we reduce the problem to the case where $n_{X}=3$ by applying Theorem \ref{them-2}. Second, the field structure of $\mbF_{\widetilde e_{i}}$ (i.e., the set of isomorphisms of $\mbF_{\widetilde e_{i}}$ and $\overline \mbF_{p, i}$ as fields) can be translated to certain problem concerning generalized Hasse-Witt invariants (e.g. $\gamma_{\chi_{i}}(M_{\chi_{i}})$ in the proof of Proposition \ref{pro-4}). Then by applying Theorem \ref{them-2} again, we obtained the result by comparing $\gamma_{\chi_{1}}(M_{\chi_{1}})$ with $\gamma_{\chi_{2}}(M_{\chi_{2}})$.

\subsubsection{}
We have the following proposition.

\begin{proposition}\label{pro-4}
The field structure of $\mbF_{\widetilde e_{i}}$, $i\in \{1,2\}$, can be mono-anabelian reconstructed from $\pi_{1}^{\rm t}(U_{X_{i}})$. Moreover, the isomorphism $\phi|_{I_{\widetilde e_{1}}}: I_{\widetilde e_{1}} \xrightarrow{\simeq} I_{\widetilde e_{2}}$ induces an isomorphism $$\theta_{\phi, \widetilde e_{1}, \widetilde e_{2}}: \mbF_{\widetilde e_{1}}\xrightarrow{\simeq} \mbF_{\widetilde e_{2}}$$ as fields. 
\end{proposition}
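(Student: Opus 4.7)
The first assertion---that the field structure on $\mbF_{\widetilde e_i}$ is mono-anabelian reconstructable from $\pi_{1}^{\rm t}(U_{X_i})$---follows from Tamagawa's analysis in \cite{T4}: once ${\rm Ine}(\pi_{1}^{\rm t}(U_{X_i}))$ has been reconstructed (Proposition \ref{them-1}), the multiplicative group of $\mbF_{\widetilde e_i}$ is immediate from the identification $I_{\widetilde e_i}\otimes_{\mbZ}(\mbQ/\mbZ)_i^{p'} \cong \overline{\mbF}_{p,i}^{\times}$, and the additive structure is recovered by computing generalized Hasse--Witt invariants of cyclic prime-to-$p$ tame coverings of $(X_i, D_{X_i})$ whose ramification is concentrated at prescribed subsets of $D_{X_i}$. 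The genuinely new assertion is that this additive structure is transported by the open surjection $\phi$.

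The plan for the compatibility statement is as follows. Given $a_1, b_1 \in \mbF_{\widetilde e_1}^{\times}$, let $a_2, b_2 \in \mbF_{\widetilde e_2}^{\times}$ denote their images under the multiplicative bijection induced by $\phi|_{I_{\widetilde e_1}}\otimes \tn{id}$. I would show that for any $c_1 \in \mbF_{\widetilde e_1}$ with $a_1 + b_1 + c_1 = 0$, the corresponding $c_2 \in \mbF_{\widetilde e_2}$ satisfies $a_2 + b_2 + c_2 = 0$. The first step is to reduce to a three-pointed configuration: by Lemma \ref{lem-7}, choose an open normal $H_2 \subseteq \pi_{1}^{\rm t}(U_{X_2})$ and $H_1 := \phi^{-1}(H_2)$ whose associated pointed stable curves have identical types, and by Theorem \ref{them-2} select $\widetilde e_1', \widetilde e_1''$ above chosen matched points $\widetilde e_2', \widetilde e_2''$ of $D_{\widetilde X_2}$. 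One then constructs a cyclic prime-to-$p$ tame covering associated to a character $\chi_2: H_2 \twoheadrightarrow \mbZ/n\mbZ$ ramified exactly at the $H_2$-images of $\widetilde e_2, \widetilde e_2', \widetilde e_2''$ with prescribed character values $a_2, b_2, c_2$, and sets $\chi_1 := \chi_2 \circ \phi|_{H_1}$; Theorem \ref{them-2} guarantees that $\chi_1$ is ramified at $\widetilde e_1, \widetilde e_1', \widetilde e_1''$ with values $a_1, b_1$ and some $c_1'$ which, on the multiplicative side, is the image of $c_2$.

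The second step is to compare generalized Hasse--Witt invariants. The invariant $\gamma_{\chi_i}(M_{\chi_i})$ admits an explicit formula (of Deuring--Shafarevich / Nakajima type) in terms of the three character values, and this formula is sensitive to the additive relation among them. Applying the $p$-average estimate of Proposition \ref{coro-p-average}(2) to $\phi|_{H_1}: H_1 \twoheadrightarrow H_2$, together with the equality of types supplied by Lemma \ref{lem-7}, upgrades the a priori inequality $\gamma_{\chi_1}(M_{\chi_1}) \geq \gamma_{\chi_2}(M_{\chi_2})$ to an equality. Comparing the explicit formulas under this equality forces $c_1' = c_1$ on the additive side, which is the compatibility we want.

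The main obstacle will be the last step: converting the numerical equality of Hasse--Witt invariants into the sharp additive relation. This requires choosing $\chi_2$ with sufficient flexibility (varying $n$ and the prescribed values subject to the product-one constraint at the marked points) so that each potential additive relation is detected by some accessible $\gamma_{\chi_2}$, and carrying out the bookkeeping needed to ensure that, after the reduction to matching types, the $p$-average inequality is genuinely saturated rather than strict. Once this is in hand, Theorem \ref{them-2} propagates the relation uniformly across all $\widetilde e\in D_{\widetilde X_1}$, yielding the desired field isomorphism $\theta_{\phi, \widetilde e_1, \widetilde e_2}$.
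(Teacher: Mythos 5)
Your overall strategy---reduce to a small number of marked points, then compare generalized Hasse--Witt invariants for characters $\chi_2$ and their pullbacks $\chi_1 = \chi_2 \circ \phi|_{H_1}$---is genuinely aligned with the paper's, but the central step of your plan has a gap that I do not think can be repaired as stated. You claim that the $p$-average estimate of Proposition \ref{coro-p-average}(2), together with the equality of types from Lemma \ref{lem-7}, ``upgrades the a priori inequality $\gamma_{\chi_1}(M_{\chi_1}) \geq \gamma_{\chi_2}(M_{\chi_2})$ to an equality.'' This does not follow. Proposition \ref{coro-p-average}(2) controls a \emph{limit} over $r \to \infty$, i.e.\ an average of Hasse--Witt invariants over all characters of a given order, and equality of such averages does not yield equality of the individual generalized Hasse--Witt invariants $\gamma_{\chi_1}$ and $\gamma_{\chi_2}$ for a fixed $\chi_2$. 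Indeed, for an open surjection $\phi$ that is not an isomorphism one should \emph{not} expect $\gamma_{\chi_1}(M_{\chi_1}) = \gamma_{\chi_2}(M_{\chi_2})$ for arbitrary $\chi_2$; only the inequality is available. Your subsequent appeal to an ``explicit formula (of Deuring--Shafarevich / Nakajima type)'' that would convert an equality of invariants into an additive relation is also problematic: generalized Hasse--Witt invariants for prime-to-$p$ cyclic tame covers do not have a simple closed-form that reads off additive relations among character values; what exists is a threshold criterion, and it is the threshold, not a formula, that the paper exploits.

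The way the paper circumvents this is by working only with the \emph{maximum} value of the Hasse--Witt invariant. By \cite[Remark 3.7]{T4} one has the universal upper bound $\gamma_{\chi_i}(M_{\chi_i}) \leq g_X + 1$, and by \cite[Claim 5.4]{T4}, for $r$ in a suitable cofinal family, the set $\text{Hom}_{\rm fields}(\mbF_{p^r, \widetilde e_i}, \mbF_{p^r})$ is precisely the complement, inside $\text{Hom}^{\rm surj}_{\rm groups}(\mbF_{p^r, \widetilde e_i}^{\times}, \mbF_{p^r}^{\times})$, of the image $\text{Res}_{i,r}(\Gamma_{i,r}^{-1}(\{g_X+1\}))$ of characters achieving the maximum. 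When $\kappa_2$ has $\gamma_{\kappa_2}(M_{\kappa_2}) = g_X + 1$, the surjection $M_{\kappa_1}[\kappa_1] \twoheadrightarrow M_{\kappa_2}[\kappa_2]$ gives $\gamma_{\kappa_1}(M_{\kappa_1}) \geq g_X + 1$, and the upper bound then forces equality, so the maximal locus on the $X_2$ side injects into the maximal locus on the $X_1$ side. A cardinality count (both $\text{Hom}_{\rm fields}$ sets have size $r$) upgrades this to a bijection of the complements, which is precisely the desired compatibility of field structures. This threshold argument is the crucial ingredient your proposal is missing; without something replacing it, the route through exact preservation of individual Hasse--Witt invariants does not close. (A secondary point: the paper's reduction to $n_X = 3$ proceeds through an \'etale covering pulled back via $\phi^{\text{\'et}}$ followed by a quotient killing the inertia of the complementary marked points, rather than via Lemma \ref{lem-7}, though your route is not wrong, just heavier than needed.)
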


\begin{proof}

First, we claim that we may assume $n_{X}=3.$ If $g_{X}=0$, then $n_{X}\geq 3$. Suppose that $g_{X}\geq 1$. Theorem \ref{them-2} implies that $\phi: \pi_{1}^{\rm t}(U_{X_{1}}) \twoheadrightarrow\pi_{1}^{\rm t}(U_{X_{2}})$ induces an open continuous surjection $\phi^{\text{\'et}}: \pi_{1}(X_{1})\twoheadrightarrow \pi_{1}(X_{2}).$ Let $H'_{2} \subseteq \pi_{1}(X_{2})$ be an open normal subgroup such that $\#(\pi_{1}(X_{2})/H'_{2})\geq 3$ and $H'_{1} :=(\phi^{\text{\'et}})^{-1}(H_{2}')$. Write $H_{i}\subseteq \pi_{1}^{\rm t}(U_{X_{i}})$, $i \in \{1, 2\}$, for the inverse image of $H_{i}'$ of the natural surjection $\pi_{1}^{\rm t}(U_{X_{i}})\twoheadrightarrow\pi_{1}(X_{i})$, and $(X_{H_{i}}, D_{X_{H_i}})$ for the smooth pointed stable curve of type $(g_{X_{H_i}}, n_{X_{H_{i}}})$ over $k_{i}$ induced by $H_{i}$. Note that  $g_{X_{H_1}}=g_{X_{H_2}}\geq 2$ and $n_{X_{H_{1}}}=n_{X_{H_{2}}}\geq 3$. By replacing $(X_{i}, D_{X_{i}})$ by $(X_{H_{i}}, D_{X_{H_i}})$, we may assume $g_{X}\geq 2$ and $n_{X} \geq 3$. The surjection $\phi$ induces a bijection $$D_{X_{1}} \xrightarrow{\vartheta_{X_{1}}^{-1}} D_{X_{1}}^{\rm gp} \xrightarrow{\rho_{\phi}} D_{X_{2}}^{\rm gp} \xrightarrow{\vartheta_{X_{2}}} D_{X_{2}}.$$ Let $D'_{X_1}:= \{e_{1, 1}, e_{1, 2}, e_{1, 3}\} \subseteq D_{X_{1}}$ and $D'_{X_2} := \{e_{2, 1}:= \vartheta_{X_{2}}\circ\rho_{\phi}\circ\vartheta_{X_{1}}^{-1}(e_{1, 1}), e_{2, 2}:= \vartheta_{X_{2}}\circ\rho_{\phi}\circ\vartheta_{X_{1}}^{-1}(e_{1, 2}), e_{2, 3}:= \vartheta_{X_{2}}\circ\rho_{\phi}\circ\vartheta_{X_{1}}^{-1}(e_{1, 3})\} \subseteq D_{X_{2}}$. Then $(X_{i}, D'_{X_i})$, $i\in \{1, 2\}$, is a smooth pointed stable curve of type $(g_{X}, 3)$ over $k_{i}$. Write $I_{i}$, $i \in \{1, 2\}$, for the closed subgroup of $\pi_{1}^{\rm t}(U_{X_i})$ generated by the inertia subgroups associated to the elements of $D_{\widetilde X_{i}}$ whose images in $D_{X_{i}}$ are contained in $D_{X_{i}} \setminus D'_{X_i}$. Then we have an isomorphism $$\pi_{1}^{\rm t}(X_{i} \setminus D'_{X_i}) \cong \pi_{1}^{\rm t}(U_{X_i})/I_{i}, \ i \in \{1, 2\}.$$ Moreover, Theorem \ref{them-2} implies that $\phi$ induces an open continuous surjective homomorphism $$\phi': \pi_{1}^{\rm t}(X_{1} \setminus D'_{X_1}) \twoheadrightarrow \pi_{1}^{\rm t}(X_{2} \setminus D'_{X_2}).$$ Thus, by replacing $(X_{i}, D_{X_{i}})$, $\pi_{1}^{\rm t}(U_{X_{i}})$, and $\phi$ by $(X_{i}, D'_{X_i})$, $\pi_{1}^{\rm t}(X_{i} \setminus D'_{X_i})$, and $\phi'$, respectively, we may assume $n_{X}=3.$

Let $r \in \mbN$. We denote by $\mbF_{p^{r}, \widetilde e_{i}}, \ i \in \{1, 2\},$ the unique subfield of $\mbF_{\widetilde e_{i}}$ whose cardinality is equal to $p^{r}$. On the other hand, we fix any finite field $\mbF_{p^{r}}$ of cardinality $p^{r}$ and an algebraic closure $\overline \mbF_{p}$ of $\mbF_{p}$. By Proposition \ref{them-1}, we have that $\mbF_{p^{r}, \widetilde e_{i}}^{\times}=I_{\widetilde e_{i}}/(p^{r}-1)$ can be mono-anabelian reconstructed from $\pi_{1}^{\rm t}(U_{X_{i}})$. Then reconstructing the field structure of $\mbF_{p^{r}, \widetilde e_{i}}$ is equivalent to reconstructing $\text{Hom}_{\rm fields}(\mbF_{p^{r}, \widetilde e_{i}}, \mbF_{p^{r}})$ as a subset of $\text{Hom}_{\rm group}(\mbF_{p^{r}, \widetilde e_{i}}^{\times}, \mbF_{p^{r}}^{\times})$. Note that, in order to reconstruct the field structure of $\mbF_{\widetilde e_{i}}$, it is sufficient to reconstruct the subset $\text{Hom}_{\rm fields}(\mbF_{p^{r}, \widetilde e_{i}}, \mbF_{p^{r}})$ for $r$ in a cofinal subset of $\mbN$ with respect to division.

Let $\chi_{i} \in \text{Hom}_{\rm groups}(\pi_{1}^{\rm t}(U_{X_{i}})^{\rm ab}\otimes \mbZ/(p^{r}-1)\mbZ, \mbF^{\times}_{p^{r}}).$ Write $H_{\chi_{i}}$ for the kernel of $\pi_{1}^{\rm t}(U_{X_{i}}) \twoheadrightarrow \pi_{1}^{\rm t}(U_{X_{i}})^{\rm ab}\otimes \mbZ/(p^{r}-1)\mbZ \overset{\chi_{i}}\rightarrow\mbF_{p^{r}}^{\times}$, $M_{\chi_{i}}$ for $H^{\rm ab}_{\chi_{i}} \otimes \mbF_{p}$, and $(X_{H_{\chi_{i}}}, D_{X_{H_{\chi_{i}}}})$ for the smooth pointed stable  curve over $k_{i}$ induced by $H_{\chi_{i}}$. 
We define $$M_{\chi_{i}}[\chi_{i}]:= \{a \in M_{\chi_{i}} \otimes_{\mbF_{p}} \overline \mbF_{p} \ | \ \sigma(a)=\chi_{i}(\sigma)a \ \text{for all} \ \sigma \in \pi_{1}^{\rm t}(U_{X_{i}})^{\rm ab}\otimes \mbZ/(p^{r}-1)\mbZ \}$$
and $\gamma_{\chi_{i}}(M_{\chi_{i}}):=\text{dim}_{\overline \mbF_{p}}(M_{\chi_{i}}[\chi_{i}])$ (i.e. a generalized Hasse-Witt invariant (see \cite[Section 2.2]{Y5})). Then \cite[Remark 3.7]{T4} implies  $\gamma_{\chi_{i}}(M_{\chi_{i}}) \leq g_{X}+1$. Moreover, we define two maps $$\text{Res}_{i, r}: \text{Hom}_{\rm groups}(\pi_{1}^{\rm t}(U_{X_{i}})^{\rm ab}\otimes \mbZ/(p^{r}-1)\mbZ, \mbF^{\times}_{p^{r}})\rightarrow \text{Hom}_{\rm groups}(\mbF^{\times}_{p^{r}, \widetilde e_{i}}, \mbF_{p^{r}}^{\times}),$$
$$\Gamma_{i, r}: \text{Hom}_{\rm groups}(\pi_{1}^{\rm t}(U_{X_{i}})^{\rm ab}\otimes \mbZ/(p^{r}-1)\mbZ, \mbF^{\times}_{p^{r}}) \rightarrow \mbZ_{\geq 0},  \ \chi_{i}\mapsto\gamma_{\chi_{i}}(M_{\chi_{i}}),$$
where the map $\text{Res}_{i, r}$ is the restriction with respect to the natural inclusion $\mbF^{\times}_{p^{r}, \widetilde e_{i}} \hookrightarrow \pi_{1}^{\rm t}(U_{X_{i}})^{\rm ab}\otimes \mbZ/(p^{r}-1)\mbZ.$ 

Let $m_{0}$ be the product of all prime numbers $\leq p-2$ if $p\neq 2, 3$ and $m_{0}=1$ if $p=2, 3$. Let $r_{0}$ be the order of $p$ in the multiplicative group $(\mbZ/m_{0}\mbZ)^{\times}$. Then \cite[Claim 5.4]{T4} implies the following result:
\begin{quote}
there exists a constant $C(g_{X})$ which depends only on $g_{X}$ such that, for each $r > \text{log}_{p}(C(g_{X})+1)$ divisible by $r_{0}$, we have $$\text{Hom}_{\rm fields}(\mbF_{p^{r}, \widetilde e_{i}}, \mbF_{p^{r}})= \text{Hom}^{\rm surj}_{\rm groups}(\mbF_{p^{r}, \widetilde e_{i}}^{\times}, \mbF_{p^{r}}^{\times}) \setminus \text{Res}_{i, r}(\Gamma_{i, r}^{-1}(\{g_{X}+1\})), \ i \in \{1, 2\},$$
where $\text{Hom}^{\rm surj}_{\rm groups}(-, -)$ denotes the set of surjections whose elements are contained in $\text{Hom}_{\rm groups}(-, -)$.
\end{quote}
Let $\kappa_{2} \in \text{Hom}_{\rm groups}(\pi_{1}^{\rm t}(U_{X_{2}})^{\rm ab}\otimes \mbZ/(p^{r}-1)\mbZ, \mbF^{\times}_{p^{r}}).$ Then $\phi$ induces a character $$\kappa_{1} \in \text{Hom}_{\rm groups}(\pi_{1}^{\rm t}(U_{X_{1}})^{\rm ab}\otimes \mbZ/(p^{r}-1)\mbZ, \mbF^{\times}_{p^{r}}).$$ Moreover, the surjection $\phi|_{H_{\kappa_{1}}}$ induces a surjection $M_{\kappa_{1}}[\kappa_{1}] \twoheadrightarrow M_{\kappa_{2}}[\kappa_{2}].$  Suppose that $\kappa_{2} \in \Gamma_{2, r}^{-1}(\{g_{X}+1\})$. The surjection $M_{\kappa_{1}}[\kappa_{1}] \twoheadrightarrow M_{\kappa_{2}}[\kappa_{2}]$ implies $\gamma_{\kappa_{1}}(M_{\kappa_{1}})=g_{X}+1$. This means  $\kappa_{1} \in \Gamma_{1, r}^{-1}(\{g_{X}+1\})$. On the other hand, the isomorphism $\phi|_{I_{\widetilde e_{1}}}: I_{\widetilde e_{1}} \xrightarrow{\simeq}I_{\widetilde e_{2}}$ induces an injection $$\text{Res}_{2, r}(\Gamma_{2, r}^{-1}(\{g_{X}+1\})) \hookrightarrow \text{Res}_{1, r}(\Gamma_{1, r}^{-1}(\{g_{X}+1\})).$$ Since $\#(\text{Hom}_{\rm fields}(\mbF_{p^{r}, \widetilde e_{1}}, \mbF_{p^{r}}))=\#(\text{Hom}_{\rm fields}(\mbF_{p^{r}, \widetilde e_{2}}, \mbF_{p^{r}}))$, we obtain that $\phi|_{I_{\widetilde e_{1}}}$ induces a bijection $\text{Hom}_{\rm fields}(\mbF_{p^{r}, \widetilde e_{2}}, \mbF_{p^{r}}) \xrightarrow{\simeq} \text{Hom}_{\rm fields}(\mbF_{p^{r}, \widetilde e_{1}}, \mbF_{p^{r}}).$ Thus, $\phi|_{I_{\widetilde e_{1}}}$ induces a bijection $$\text{Hom}_{\rm fields}(\mbF_{\widetilde e_{2}}, \overline \mbF_{p}) \xrightarrow{\simeq} \text{Hom}_{\rm fields}(\mbF_{\widetilde e_{1}}, \overline \mbF_{p}).$$ If we choose $\overline \mbF_{p}=\mbF_{\widetilde e_{2}}$, then the image of $\text{id}_{\mbF_{\widetilde e_{2}}}$ via the bijection above induces an isomorphism $\theta_{\phi, \widetilde e_{1}, \widetilde e_{2}}:\mbF_{\widetilde e_{1}}\xrightarrow{\simeq}\mbF_{\widetilde e_{2}}$ as fields. This completes the proof of the proposition.
\end{proof}

\section{Main theorems}\label{sec-5}

\subsection{The first main theorem}
In this subsection, we apply the results obtained in the previous sections to prove that the curves of type $(0,n)$ over $\overline \mbF_{p}$ can be reconstructed group-theoretically from {\it open continuous homomorphism} (i.e. Theorem \ref{them-3}). 

\subsubsection{\bf Settings}
We fix some notation. Let $(X_{i}, D_{X_{i}})$, $i \in \{1, 2\}$, be a smooth pointed stable curve of type $(g_{X}, n_{X})$ over an algebraically closed field $k_{i}$ of characteristic $p>0$, $U_{X_i}:= X_{i} \setminus D_{X_{i}}$, $\pi_{1}^{\rm t}(U_{X_{i}})$ the tame fundamental group of $U_{X_{i}}$, $\pi_{1}(X_{i})$ the \'etale fundamental group of $X_{i}$, and $(\widetilde X_{i}, D_{\widetilde X_{i}})$ the universal tame covering of $(X_{i}, D_{X_i})$ associated to $\pi_{1}^{\rm t}(U_{X_{i}})$ (\ref{unicov313}). Let $k_{i}^{\rm m}$, $i\in \{1, 2\}$, be the {\it minimal} algebraically closed subfield of $k_{i}$ over which $U_{X_{i}}$ can be defined. Thus, by considering the function field of $X_{i}$, we obtain a smooth pointed stable curve $(X^{\rm m}_{i}, D_{X^{\rm m}_{i}})$  (i.e., {\it a minimal model of} $(X_{i}, D_{X_{i}})$ (cf. \cite[Definition 1.30 and Lemma 1.31]{T3})) such that $U_{X_{i}} \cong U_{X^{\rm m}_{i}} \times_{k^{\rm m}_{i}}k_{i}$ as $k_{i}$-schemes, where $U_{X_{i}^{\rm m}}:= X^{\rm m}_{i} \setminus D_{X_{i}^{\rm m}}.$ Note that $\pi_{1}^{\rm t}(U_{X^{\rm m}_{i}})$ is naturally isomorphic to $\pi_{1}^{\rm t}(U_{X_{i}})$. We shall denote by $\overline \mbF_{p, i}$ the algebraic closure of $\mbF_{p}$ in $k_{i}$. Moreover, we put
\begin{eqnarray*}
d_{(X_{i}, D_{X_{i}})}:=  \left\{ \begin{array}{ll}
0, & \text{if} \ k_{i}^{\rm m} \cong \overline \mbF_{p, i},
\\
1, & \text{if} \ k_{i}^{\rm m}\not\cong \overline \mbF_{p, i}.
\end{array} \right.
\end{eqnarray*}

\subsubsection{} Firstly, we have the following lemma.
\begin{lemma}\label{lemsurj}
Let $\phi: \pi_{1}^{\rm t}(U_{X_{1}}) \rightarrow\pi_{1}^{\rm t}(U_{X_{2}})$ be an arbitrary open continuous homomorphism. Then $\phi$ is a surjection.
\end{lemma}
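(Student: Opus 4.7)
The plan is to show that the image $H_{2} := \phi(\pi_{1}^{\rm t}(U_{X_{1}}))$, which is an open subgroup of $\pi_{1}^{\rm t}(U_{X_{2}})$ because $\phi$ is open, has index $d := [\pi_{1}^{\rm t}(U_{X_{2}}) : H_{2}]$ equal to $1$. The subgroup $H_{2}$ corresponds to a connected tame covering $f_{H_{2}}: (X_{H_{2}}, D_{X_{H_{2}}}) \to (X_{2}, D_{X_{2}})$ of degree $d$ over $k_{2}$, whose source is a smooth pointed stable curve of some type $(g_{H_{2}}, n_{H_{2}})$. First I would invoke the Riemann--Hurwitz formula for tame coverings to obtain the identity
\[
2g_{H_{2}} - 2 + n_{H_{2}} \;=\; d\,(2g_{X} - 2 + n_{X}),
\]
observing also that $n_{X} \geq 1$ forces $n_{H_{2}} \geq 1$, since $D_{X_{H_{2}}}$ is the preimage of the nonempty set $D_{X_{2}}$.

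Next, I would pass to maximal prime-to-$p$ quotients. The surjection $\pi_{1}^{\rm t}(U_{X_{1}}) \twoheadrightarrow H_{2}$ descends to a surjection $\pi_{1}^{\rm t}(U_{X_{1}})^{p'} \twoheadrightarrow H_{2}^{p'}$ of topologically finitely generated profinite groups, where $H_{2}^{p'}$ is canonically identified with $\pi_{1}^{\rm t}(U_{X_{H_{2}}})^{p'}$. By Grothendieck's specialization theorem, the prime-to-$p$ part of the tame fundamental group of a smooth pointed stable curve of type $(g,n)$ coincides with the prime-to-$p$ completion of the topological surface group of type $(g,n)$, and hence its minimal number of topological generators equals $2g + n - 1$ when $n \geq 1$ and $2g$ when $n = 0$. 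Since the minimal number of topological generators is non-increasing under continuous surjections, applying this to both source and target of the above surjection and substituting the Riemann--Hurwitz identity reduces the problem to the numerical inequality
\[
d\,(2g_{X} + n_{X} - 2) \;\leq\; 2g_{X} + n_{X} - 2
\]
(with the analogue $d(g_{X} - 1) \leq g_{X} - 1$ when $n_{X} = 0$). Since $(X_{2}, D_{X_{2}})$ is hyperbolic, the relevant factor is strictly positive, forcing $d = 1$ and hence $H_{2} = \pi_{1}^{\rm t}(U_{X_{2}})$.

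The argument is essentially a Schreier-type rank comparison, and there is no serious obstacle beyond carefully handling the two sub-cases $n_{X} = 0$ and $n_{X} \geq 1$ together with the standard identification of the prime-to-$p$ tame fundamental group via specialization. As a consistency check, once surjectivity of $\phi$ is established, the topologically-finitely-generated argument of \cite[Proposition 16.10.6]{FJ} (already used in the proof of Proposition \ref{coro-p-average}(2)) further promotes $\phi^{p'}$ to an isomorphism, which is consistent with the hypothesis that the types of $(X_{1}, D_{X_{1}})$ and $(X_{2}, D_{X_{2}})$ coincide.
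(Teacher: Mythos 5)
Your proof is correct and uses essentially the same ingredients as the paper, namely the tame Riemann--Hurwitz formula together with a Schreier-type rank comparison on maximal prime-to-$p$ quotients. You merely streamline the presentation by substituting the Riemann--Hurwitz identity directly into the rank inequality, arriving at $(1-d)(2g_{X}+n_{X}-2)\geq 0$ in one step, whereas the paper first pins down $(g_{X_\phi}, n_{X_\phi})=(g_X, n_X)$ (additionally citing Proposition \ref{coro-p-average}(2)) and then runs a separate Riemann--Hurwitz case analysis for the resulting totally ramified cover.
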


\begin{proof}
We denote by $\Pi_{\phi}$ the image of $\phi$ which is an open subgroup of $\pi_{1}^{\rm t}(U_{X_{2}})$. Let $(X_{\phi}, D_{X_{\phi}})$ be the smooth pointed stable curve of type $(g_{X_{\phi}}, n_{X_{\phi}})$ over $k_{2}$ induced by $\Pi_{\phi}$ and $f_{\phi}: (X_{\phi}, D_{X_{\phi}}) \rightarrow(X_{2}, D_{X_{2}})$ the tame covering of smooth pointed stable curves over $k_{2}$ induced by the inclusion $\Pi_{\phi}\hookrightarrow \pi_{1}^{\rm t}(U_{X_{2}})$. Since $f_{\phi}$ is a tame covering, we have that $n_{X_{\phi}} \geq n_{X}$. On the other hand, if $g_{X}=0$, we have $g_{\phi} \geq 0$. If $g_{X}>0$, the Riemann-Hurwitz formula implies  $g_{X_{\phi}}\geq [\pi_{1}^{\rm t}(U_{X_{2}}): \Pi_{\phi}](g_{X}-1)+1\geq g_{X}$. Then we have $g_{\phi}\geq g_{X}$ and $n_{X_{\phi}}\geq n_{X}$. Note that $\pi_{1}^{\rm t}(U_{X_{1}}) \twoheadrightarrow\Pi_{\phi} \hookrightarrow \pi_{1}^{\rm t}(U_{X_{2}})$ implies  $$2g_{X}+n_{X}-1\geq 2g_{X_{\phi}}+n_{X_{\phi}}-1 \geq 2g_{X}+n_{X}-1.$$ Then we obtain that $2g_{X}+n_{X}-1=2g_{X_{\phi}}+n_{X_{\phi}}-1$. Moreover, Proposition \ref{coro-p-average} (ii) and the natural surjection $\pi_{1}^{\rm t}(U_{X_{1}}) \twoheadrightarrow \Pi_{\phi}$ induced by $\phi$ imply that $g_{X}\geq g_{X_{\phi}}$. Then we obtain that $g_{X}=g_{X_{\phi}}.$ Thus, we have $(g_{X}, n_{X})=(g_{X_{\phi}}, n_{X_{\phi}}).$ This means that the tame covering $f_{\phi}: (X_{\phi}, D_{X_{\phi}}) \rightarrow(X_{2}, D_{X_{2}})$ is totally ramified over every marked point of $D_{X_{2}}$.

Let us prove $[\pi_{1}^{\rm t}(U_{X_{2}}): \Pi_{\phi}]=1$. Suppose  $[\pi_{1}^{\rm t}(U_{X_{2}}): \Pi_{\phi}]\neq 1$. Since $f_{\phi}$ is totally ramified, the Riemann-Hurwitz formula implies  $g_{X_{\phi}}>g_{X}$ if $n_{X} \neq 0$ and $g_{X}\neq 0$. This is a contradiction. If $n_{X}=0$, the Riemann-Hurwitz formula implies  $g_{X}=1$ if $g_{X}\neq 0$. This contradicts the assumption that $(X_{i}, D_{X_{i}})$ is a pointed stable curve. Then we obtain $g_{X}=g_{X_{\phi}}=0$. Moreover, by applying the Riemann-Hurwitz formula again, since $n_{X}=n_{X_{\phi}}$, we obtain  $n_{X}=n_{X_{\phi}}=2$. This contradicts the assumption that $(X_{i}, D_{X_{i}})$ is pointed stable curve. Then we have $[\pi_{1}^{\rm t}(U_{X_{2}}): \Pi_{\phi}]=1$. This means that $\phi$ is a surjection.
\end{proof}

\subsubsection{\bf Further settings}
In the remainder of this subsection, we suppose  $(g_{X}, n_{X})=(0, n)$. We fix two marked points $e_{1, \infty}, e_{1, 0} \in D_{X_{1}}$ distinct from each other. Moreover, we choose any field $k_{1}'\cong k_{1}$, and choose any isomorphism $\varphi_{1}: X_{1} \xrightarrow{\simeq} \mbP_{k_{1}'}^{1}$ as schemes such that $\varphi_{1}(e_{1, \infty})=\infty$ and $\varphi_{1}(e_{1, 0})=0$. Then the set of $k_{1}$-rational points $X_{1}(k_{1}) \setminus \{e_{1, \infty}\}$ is equipped with a structure of $\mbF_{p}$-module via the bijection $\varphi_{1}$. Note that since any $k_{1}'$-isomorphism of $\mbP_{k_{1}'}^{1}$ fixing $\infty$ and $0$ is a scalar multiplication, the $\mbF_{p}$-module structure of $X_{1}(k_{1}) \setminus \{e_{1, \infty}\}$ does not depend on the choices of $k'_{1}$ and $\varphi_{1}$ but depends only on the choices of $e_{1, \infty}$ and $
e_{1, 0}$. Then we shall say that {\it $X_{1}(k_{1}) \setminus \{e_{1, \infty}\}$ is equipped with a structure of $\mbF_{p}$-module with respect to $e_{1, \infty}$ and $e_{1, 0}$}.


By applying Theorem \ref{them-2}, in the next lemma, we will prove that Tamagawa's group-theoretical criterion (i.e., \cite[Lemma 3.3]{T2}) for linear conditions is compatible with arbitrary open continuous surjective homomorphism.

\begin{lemma}\label{lem-8}
Let $\phi: \pi_{1}^{\rm t}(U_{X_{1}}) \twoheadrightarrow\pi_{1}^{\rm t}(U_{X_{2}})$ be an open continuous surjective homomorphism. By Lemma \ref{lem-4}, $\phi$ induces a bijection $\rho_{\phi}: D^{\rm gp}_{X_{1}} \xrightarrow{\simeq}D^{\rm gp}_{X_{2}}$. We may identify $D^{\rm gp}_{X_{i}},$ $i\in \{1, 2\}$, with $D_{X_{i}}$ via the bijection $\vartheta_{X_{i}}: D_{X_{i}}^{\rm gp} \xrightarrow{\simeq}D_{X_{i}}$. Write $e_{2, \infty}$ and $e_{2, 0}$ for $\rho_{\phi}(e_{1, \infty})$ and $\rho_{\phi}(e_{1, 0})$, respectively. Let $$\sum_{e_{1}\in D_{X_{1}}\setminus \{e_{1, \infty}, e_{1, 0}\}}b_{e_{1}}e_{1}=e_{1, 0}$$ be a linear condition with respect to $e_{1, \infty}$ and $e_{1, 0}$ on $(X_{1}, D_{X_{1}})$, where $b_{e_{1}} \in \mbF_{p}$ for each $e_{1}\in D_{X_{1}}\setminus \{e_{1, \infty}, e_{1, 0}\}$. Then the linear condition  $$\sum_{e_{1}\in D_{X_{1}}\setminus \{e_{1, \infty}, e_{1, 0}\}}b_{e_{1}}\rho_{\phi}(e_{1})=\rho_{\phi}(e_{1, 0})=e_{2, 0}$$ with respect to $e_{2, \infty}$ and $e_{2, 0}$ on $(X_{2}, D_{X_{2}})$ also holds.
\end{lemma}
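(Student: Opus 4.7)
My plan is to verify that Tamagawa's group-theoretical criterion for linear conditions (\cite[Lemma 3.3]{T2}), originally formulated for isomorphisms of tame fundamental groups, is compatible with arbitrary open continuous surjective homomorphisms. The compatibility results established in Section \ref{mpanabelian}---in particular Theorem \ref{them-2} (inertia subgroups), Proposition \ref{pro-4} (field structures at inertia), and Lemma \ref{lem-3} (the prime-to-$p$ part)---form the main technical input.

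First I would recall the structure of Tamagawa's criterion: for a smooth pointed stable curve of type $(0, n)$ with fixed $e_\infty, e_0$, the linear condition $\sum_{e} b_e \cdot e = e_0$ is detected through (i) a prime $\ell \neq p$, (ii) an $\mathbb{F}_\ell$-character $\chi$ of $\pi_1^{\rm t}(U_X)$ whose restrictions to the inertia subgroups $I_{\tilde e}$ (read via the field structure $\mathbb{F}_{\tilde e}$) are prescribed by the coefficients $b_e$, and (iii) a generalized Hasse-Witt invariant $\gamma_\chi(M_\chi)$ (in the sense of the proof of Proposition \ref{pro-4}) attached to the cyclic cover $(Y_\chi, D_{Y_\chi}) \to (X, D_X)$ determined by $\chi$. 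The linear condition corresponds to a distinguished value of this invariant.

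Next I would transport this criterion through $\phi$. Given a linear condition on $(X_1, D_{X_1})$, the isomorphism $(\phi^{p'})^{-1}$ from Lemma \ref{lem-3} identifies $\mathbb{F}_\ell$-characters on $\pi_1^{\rm t}(U_{X_1})$ and $\pi_1^{\rm t}(U_{X_2})$; by Theorem \ref{them-2} together with Proposition \ref{pro-4} the residues at $I_{\tilde e_1}$ and $I_{\tilde e_2}$ (where $\tilde e_2$ is the image of $\tilde e_1$) are matched via the field isomorphism $\theta_{\phi, \tilde e_1, \tilde e_2}$. The cyclic covers $Y_1$ and $Y_2$ determined by these characters are then linked by the induced surjection $\phi|_{\pi_1^{\rm t}(U_{Y_1})}: \pi_1^{\rm t}(U_{Y_1}) \twoheadrightarrow \pi_1^{\rm t}(U_{Y_2})$, and by the same mechanism used in the proof of Proposition \ref{pro-4} (the induced surjection $M_{\chi_1}[\chi_1] \twoheadrightarrow M_{\chi_2}[\chi_2]$) the generalized Hasse-Witt invariants on the two sides satisfy $\gamma_{\chi_1}(M_{\chi_1}) \geq \gamma_{\chi_2}(M_{\chi_2})$.

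The main obstacle will be to show that the value of the generalized Hasse-Witt invariant witnessing the linear condition is not merely bounded above by the corresponding invariant on the $X_2$-side, but in fact produces the matching witness on $X_2$. The crucial input here is Proposition \ref{coro-p-average}(2): the inequality $\mathrm{Avr}_p(H_1) \geq \mathrm{Avr}_p(H_2)$, applied to the open subgroup corresponding to $Y_i$, forces the type-matching $(g_{Y_1}, n_{Y_1}) = (g_{Y_2}, n_{Y_2})$ already exploited in Lemma \ref{lem-4}, and hence equalities---not just comparabilities---between the relevant numerical invariants. Combined with the field isomorphism $\theta_{\phi, \tilde e_{1,0}, \tilde e_{2,0}}: \mathbb{F}_{\tilde e_{1,0}} \xrightarrow{\simeq} \mathbb{F}_{\tilde e_{2,0}}$, which faithfully transports the ``target'' $e_{1,0}$ to $e_{2,0}$, this will yield the desired conclusion: the linear condition $\sum b_{e_1} e_1 = e_{1,0}$ on $X_1$ transfers via $\rho_\phi$ to $\sum b_{e_1} \rho_\phi(e_1) = e_{2,0}$ on $X_2$ with identical coefficients.
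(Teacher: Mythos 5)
Your overall plan---invoking Tamagawa's criterion, transporting characters via $\phi^{p'}$, matching inertia via Theorem \ref{them-2} and field structures via Proposition \ref{pro-4}, and using a surjection on eigenspaces---is correctly oriented, but you misidentify the shape of the criterion and consequently misdiagnose what still needs to be proved. In Tamagawa's \cite[Lemma 3.3]{T2} the linear condition $\sum b_{e_1} e_1 = e_{1,0}$ is characterized by the \emph{vanishing} of the relevant generalized Hasse--Witt eigenspace: one lifts the coefficients to nonnegative integers $b'_{e_1}$, chooses $r$ with $p^r - 2 \ge \sum b'_{e_1}$, and builds a character $\delta_1: \pi_1^{\rm t}(U_{X_1}) \twoheadrightarrow \mbZ/(p^r-1)\mbZ$ (not a degree-$\ell$ character for an independent prime $\ell$) whose restrictions to inertia encode the $b'_{e_1}$; the linear condition then holds if and only if $M_{\delta_1}[\chi_{I_{\widetilde e_{1,\infty}},r}] = 0$, with the eigenspace anchored at the inertia over $e_{1,\infty}$ via $\theta_{\phi,\widetilde e_{1,\infty},\widetilde e_{2,\infty}}$, not over $e_{1,0}$. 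Once this is in place, the one-directional inequality you correctly derive from the induced surjection $M_{\delta_1}[\chi_1] \twoheadrightarrow M_{\delta_2}[\chi_2]$ is already enough: vanishing on the $X_1$-side forces vanishing on the $X_2$-side, and reading the criterion backward gives the linear condition on $X_2$.

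The step you flag as the main obstacle and attempt to resolve via Proposition \ref{coro-p-average}(2) is therefore a red herring, and the argument you sketch there does not in fact work. Equality of the limit of $p$-averages (equivalently, type-matching $(g_{Y_1},n_{Y_1})=(g_{Y_2},n_{Y_2})$) does \emph{not} imply equality of the individual generalized Hasse--Witt invariants $\gamma_{\chi_1}(M_{\chi_1})$ and $\gamma_{\chi_2}(M_{\chi_2})$ for a fixed character; the $p$-average only constrains an average over characters, and for a non-isomorphic $\phi$ individual eigenspace dimensions can genuinely drop. If the criterion were ``$\gamma$ equals some positive distinguished value,'' your inequality would indeed be insufficient and there would be no way to close the argument along these lines; the proof succeeds precisely because the distinguished value is zero.
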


\begin{proof}
Let $\widetilde e_{2, \infty} \in D_{\widetilde X_{2}}$ be a point over $e_{2, \infty}$. The set $\mbF_{\widetilde e_{2, \infty}}:= (I_{\widetilde e_{2, \infty}}\otimes_{\mbZ} (\mbQ/\mbZ)^{p'}_{2}) \sqcup \{*_{\widetilde e_{2, \infty}}\}$ admits a structure of field, and Proposition \ref{pro-4} implies that the field structure can be mono-anabelian reconstructed from $\pi_{1}^{\rm t}(U_{X_{2}})$. Theorem \ref{them-2} implies that there exists a point $\widetilde e_{1, \infty} \in D_{\widetilde X_{1}}$ over $e_{1, \infty}$ such that $\phi(I_{\widetilde e_{1, \infty}})=\widetilde e_{2, \infty}$. By Proposition \ref{pro-4} again, the set $\mbF_{\widetilde e_{1, \infty}}:= (I_{\widetilde e_{1, \infty}}\otimes_{\mbZ} (\mbQ/\mbZ)^{p'}_{1}) \sqcup \{*_{\widetilde e_{1, \infty}}\}$ admits a structure of field which can be mono-anabelian reconstructed from $\pi_{1}^{\rm t}(U_{X_{1}})$, and $\phi$ induces an isomorphism $\theta_{\phi, \widetilde e_{1, \infty}, \widetilde e_{2, \infty}}: \mbF_{\widetilde e_{1, \infty}} \xrightarrow{\simeq} \mbF_{\widetilde e_{2, \infty}}$ as fields.

For each $e_{1} \in D_{X_{1}}$, we take $b'_{e_{1}} \in \mbZ_{\geq 0}$ such that $b'_{e_{1}} \equiv b_{e_{1}} \ (\text{mod} \ p)$ and $$\sum_{e_{1}\in D_{X_{1}}\setminus \{e_{1, \infty}, e_{1, 0}\}}b'_{e_{1}} \geq 2.$$ Let $r \geq 1$ such that $p^{r}-2 \geq \sum_{e_{1}\in D_{X_{1}}\setminus \{e_{1, \infty}, e_{1, 0}\}}b'_{e_{1}}.$ For each $\widetilde e_{1} \in D_{\widetilde X_{1}}$ over $e_{1}$, write $I_{\widetilde e_{1}, \rm ab}$ for the image of the natural morphism $I_{\widetilde e_{1}} \hookrightarrow \pi_{1}^{\rm t}(U_{X_{1}}) \twoheadrightarrow \pi_{1}^{\rm t}(U_{X_{1}})^{\rm ab}.$ Moreover, since the image of $I_{\widetilde e_{1}, \rm ab}$ does not depend on the choices of $\widetilde e_{1}$, we may write $I_{e_{1}}$ for $I_{\widetilde e_{1}, \rm ab}$. The structure of maximal prime-to-$p$ quotient of $\pi_{1}^{\rm t}(U_{X_{1}})$ implies that $\pi_{1}^{\rm t}(U_{X_{1}})^{\rm ab}$ is generated by $\{I_{e_{1}}\}_{e_{1} \in D_{X_{1}}}$, and that there exists a generator $a_{e_{1}}$, $e_{1} \in D_{X_{1}}$, of $I_{e_{1}}$ such that $\prod_{e_{1} \in D_{X_{1}}}a_{e_{1}}=1.$ We define $$I_{e_{1, \infty}} \rightarrow \mbZ/(p^{r}-1)\mbZ, \ a_{e_{1, \infty}} \mapsto 1,$$ $$\ I_{e_{1, 0}} \rightarrow \mbZ/(p^{r}-1)\mbZ, \ a_{e_{1, 0}} \mapsto (\sum_{e_{1}\in D_{X_{1}}\setminus \{e_{1, \infty}, e_{1, 0}\}}b'_{e_{1}})-1,$$ and $$I_{e_{1}} \rightarrow \mbZ/(p^{r}-1)\mbZ, \ a_{e_{1}} \mapsto -b'_{e_{1}},  \ e_{1} \in D_{X_{1}}\setminus \{e_{1, \infty}, e_{1, 0}\}.$$ Then the homomorphisms of inertia subgroups defined above induces a surjection $\delta_{1}: \pi_{1}^{\rm t}(U_{X_{1}}) \twoheadrightarrow\pi_{1}^{\rm t}(U_{X_{1}})^{\rm ab} \twoheadrightarrow \mbZ/(p^{r}-1)\mbZ.$ Note that $\text{ker}(\delta_{1})$ does not depend on the choices of the generators $\{a_{e_{1}}\}_{e_{1} \in D_{X_{1}}}$.

Let $I_{\widetilde e_{2}} := \phi(I_{\widetilde e_{1}})$, $\widetilde e_{1} \in D_{\widetilde X_{1}}$, and $I_{e_{2}}$, $e_{2} \in D_{X_{2}}$ be the image of the natural homomorphism $I_{\widetilde e_{2}}  \hookrightarrow \pi_{1}^{\rm t}(U_{X_{2}}) \twoheadrightarrow \pi_{1}^{\rm t}(U_{X_{2}})^{\rm ab}.$ Since $(p, p^{r}-1)=1$, by Theorem \ref{them-2}, $\delta_{1}$ and the isomorphism $\phi^{p'}: \pi_{1}^{\rm t}(U_{X_{1}})^{p'} \xrightarrow{\simeq} \pi_{1}^{\rm t}(U_{X_{2}})^{p'}$ imply the following homomorphisms of inertia subgroups: $$I_{e_{2, \infty}} \rightarrow \mbZ/(p^{r}-1)\mbZ, \ a_{e_{2, \infty}} \mapsto 1,$$ $$\ I_{e_{2, 0}} \rightarrow \mbZ/(p^{r}-1)\mbZ, \ a_{e_{2, 0}} \mapsto (\sum_{e_{1}\in D_{X_{1}}\setminus \{e_{1, \infty}, e_{1, 0}\}}b'_{e_{1}})-1,$$ and $$I_{e_{2}} \rightarrow \mbZ/(p^{r}-1)\mbZ, \ a_{e_{2}} \mapsto -b'_{e_{1}},  \ e_{2} \in D_{X_{2}}\setminus \{e_{2, \infty}, e_{2, 0}\},$$ where $a_{e_{2}}$, $e_{2} \in D_{X_{2}}$, denotes the element induced by $a_{e_{1}}$, $e_{1} \in D_{X_{1}}$, via $\phi$. Then the homomorphisms of inertia subgroups defined above induces a sujection $\delta_{2}: \pi_{1}^{\rm t}(U_{X_{2}}) \twoheadrightarrow \pi_{1}^{\rm t}(U_{X_{2}})^{\rm ab} \twoheadrightarrow\mbZ/(p^{r}-1)\mbZ.$

We put $H_{\delta_{i}}:=\text{ker}(\delta_{i}),\ M_{\delta_{i}} := H^{\rm ab}_{\delta_{i}} \otimes\mbF_{p}, \ i \in \{1, 2\}.$ Write $(X_{H_{\delta_{i}}}, D_{X_{H_{\delta_{i}}}})$ for the smooth pointed stable curve over $k_{i}$ induced by $H_{\delta_{i}}$, where  $H_{\delta_{1}}=\phi^{-1}(H_{\delta_{2}})$. The $\mbF_{p}$-vector space $M_{\delta_{i}}$ admits a natural action of $I_{\widetilde e_{i, \infty}}$ via conjugation which coincides with the action via the following character $$\chi_{I_{\widetilde e_{i, \infty}}, r}: I_{\widetilde e_{i, \infty}} \hookrightarrow \pi_{1}^{\rm t}(U_{X_{i}})  \overset{\delta_{i}}\twoheadrightarrow \mbZ/(p^{r}-1)\mbZ=I_{\widetilde e_{i, \infty}}/(p^{r}-1)\hookrightarrow \mbF_{\widetilde e_{i, \infty}}^{\times}, \ i \in \{1, 2\}.$$ We put $M_{\delta_{i}}[\chi_{I_{\widetilde e_{i, \infty}}, r}]:= \{a\in M_{\delta_{i}}\otimes_{\mbF_{p}} \mbF_{\widetilde e_{i, \infty}} \ |\ \sigma(a)=\chi_{I_{\widetilde e_{i, \infty}}, r}(\sigma)a \ \text{for all} \ \sigma \in I_{\widetilde e_{i, \infty}}\}$ (in fact, $\text{dim}_{\mbF_{\widetilde e_{i, \infty}}}(M_{\delta_{i}}[\chi_{I_{\widetilde e_{i, \infty}}, r}])$ is the first generalized Hasse-Witt invariant associated to the tame covering of $U_{X_{i}}$ corresponding to $H_{\delta_{i}} \subseteq \pi_{1}^{\rm t}(U_{X_{i}})$ (see \cite[Section 2.2]{Y5})). Since the action of $I_{\widetilde e_{i}, \infty}$ on $M_{\delta_{i}}$ is semi-simple, we obtain a surjection $M_{\delta_{1}}[\chi_{I_{\widetilde e_{1, \infty}}, r}] \twoheadrightarrow M_{\delta_{2}}[\chi_{I_{\widetilde e_{2, \infty}}, r}]$ induced by $\phi|_{H_{\delta_{1}}}$ and $\theta_{\phi, \widetilde e_{1, \infty}, \widetilde e_{2, \infty}}$. On the other hand, the third and the final paragraphs of the proof of \cite[Lemma 3.3]{T2} imply that the linear condition  $$\sum_{e_{1}\in D_{X_{1}}\setminus \{e_{1, \infty}, e_{1, 0}\}}b_{e_{1}}e_{1}=e_{1, 0}$$ with respect to $e_{1, \infty}$ and $e_{1, 0}$ on $(X_{1}, D_{X_{1}})$ holds if and only if $M_{\delta_{1}}[\chi_{I_{\widetilde e_{1, \infty}}, r}]=0$. Thus, we obtain $M_{\delta_{2}}[\chi_{I_{\widetilde e_{2, \infty}}, r}] =0$. Then the third and the final paragraphs of the proof of \cite[Lemma 3.3]{T2} imply that the linear condition $$\sum_{e_{1}\in D_{X_{1}}\setminus \{e_{1, \infty}, e_{1, 0}\}}b_{e_{1}}\rho_{\phi}(e_{1})=e_{2, 0}$$ with respect to $e_{2, \infty}$ and $e_{2, 0}$ on $(X_{2}, D_{X_{2}})$ holds. This completes the proof of the lemma.
\end{proof}

\begin{remark}\label{rem-lem-8-1}
Note that, if $X_{1}=\mbP_{k}^{1}$, then the linear condition is as follows: $$\sum_{e_{1}\in D_{X_{1}}\setminus \{\infty, 0\}}b_{e_{1}}e_{1}=0$$ with respect to $\infty$ and $0$.
\end{remark}

\subsubsection{}
Now, we prove the first main theorem of the present paper.

\begin{theorem}\label{them-3}
We maintain the notation and settings introduced above. Then we have the following claims.
\begin{enumerate}
  \item $d_{(X_{i}, D_{X_{i}})}$, $i\in \{1, 2\}$, can be mono-anabelian reconstructed from $\pi_{1}^{\rm t}(U_{X_{i}})$.
  \item  Suppose  $k_{1}^{\rm m} \cong \overline \mbF_{p, 1}$. Then the set of open continuous homomorphisms $${\rm Hom}^{\rm op}_{\rm pg}(\pi_{1}^{\rm t}(U_{X_{1}}), \pi_{1}^{\rm t}(U_{X_{2}}))$$ is non-empty if and only if $U_{X^{\rm m}_1} \cong U_{X^{\rm m}_2}$ as schemes. In particular, if this is the case, we have $k_{2}^{\rm m} \cong \overline \mbF_{p, 2}$ and $${\rm Hom}^{\rm op}_{\rm pg}(\pi_{1}^{\rm t}(U_{X_{1}}), \pi_{1}^{\rm t}(U_{X_{2}})) = {\rm Isom}_{\rm pg}(\pi_{1}^{\rm t}(U_{X_{1}}), \pi_{1}^{\rm t}(U_{X_{2}})).$$
\end{enumerate}

\end{theorem}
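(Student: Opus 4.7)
The plan is to carry out a \emph{Hom-version} of Tamagawa's coordinate reconstruction for genus-zero curves over $\overline{\mbF}_{p}$, assembled from: Lemma \ref{lemsurj} (every open continuous homomorphism is automatically surjective, so all of Section \ref{mpanabelian} applies); Theorem \ref{them-2} and Proposition \ref{pro-4} (inertia subgroups and the field structures on $\mbF_{\widetilde e_{i}}$ are mono-anabelian and transported along $\phi$); Lemma \ref{lem-4} (a bijection $\rho_{\phi} \colon D_{X_{1}}^{\rm gp} \xrightarrow{\simeq} D_{X_{2}}^{\rm gp}$); and, crucially, Lemma \ref{lem-8} (the $\mbF_{p}$-linear conditions encoding the coordinates of the marked points are preserved along $\phi$).

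For Part (2), the main content, assume $k_{1}^{\rm m} \cong \overline{\mbF}_{p,1}$ and let $\phi$ be an open continuous homomorphism; then $\phi$ is surjective by Lemma \ref{lemsurj}. Fix three marked points $e_{1,\infty}, e_{1,0}, e_{1,1}$ in $D_{X_{1}}$ and set $e_{2,\ast} := \vartheta_{X_{2}} \circ \rho_{\phi} \circ \vartheta_{X_{1}}^{-1}(e_{1,\ast})$ for $\ast \in \{\infty, 0, 1\}$. Lemma \ref{lem-8}, applied to $\phi$ and to all the induced surjections on open subgroups, transfers every $\mbF_{p}$-linear relation among the normalized coordinates of $(X_{1}^{\rm m}, D_{X_{1}^{\rm m}})$ with respect to $(e_{1,\infty}, e_{1,0}, e_{1,1})$ to the corresponding relation for $(X_{2}^{\rm m}, D_{X_{2}^{\rm m}})$ with respect to $(e_{2,\infty}, e_{2,0}, e_{2,1})$. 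Because $k_{1}^{\rm m} \cong \overline{\mbF}_{p,1}$, the normalized coordinates of $D_{X_{1}^{\rm m}}$ all lie in some common finite subfield $\mbF_{p^{N}} \subseteq \overline{\mbF}_{p,1}$, and this linear-condition data, combined with Tamagawa's generalized Hasse-Witt calculation run through the whole tower of open subgroups (cf. \cite[Theorem 0.2]{T4}), pins down the tuple of these coordinates in $\overline{\mbF}_{p}$ up to a single automorphism of $\overline{\mbF}_{p}$. The transferred system of linear relations then forces the normalized coordinates of $D_{X_{2}^{\rm m}}$ to lie in the same finite subfield, so $k_{2}^{\rm m} \cong \overline{\mbF}_{p,2}$, and to coincide with those of $D_{X_{1}^{\rm m}}$ after some automorphism of $\overline{\mbF}_{p}$; hence $U_{X_{1}^{\rm m}} \cong U_{X_{2}^{\rm m}}$ as schemes.

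For Part (1), running the reconstruction apparatus just described on $\pi_{1}^{\rm t}(U_{X_{i}})$ itself produces a candidate set of coordinates in $\mbF_{\widetilde e_{\infty}} \cong \overline{\mbF}_{p,i}$; I would then argue that $d_{(X_{i}, D_{X_{i}})}$ equals $0$ precisely when this candidate, fed back through the tower of finite tame covers, is compatible with \emph{every} reconstructed $\mbF_{p}$-linear condition, equivalently when $k_{i}^{\rm m}$ is algebraic over $\mbF_{p}$. Both the candidate and its compatibility are extracted group-theoretically from $\pi_{1}^{\rm t}(U_{X_{i}})$ via Theorem \ref{them-2}, Proposition \ref{pro-4}, and Lemma \ref{lem-8}, so $d_{(X_{i}, D_{X_{i}})}$ is mono-anabelian reconstructable.

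The converse of Part (2) and the equality $\text{Hom}^{\rm op}_{\rm pg} = \text{Isom}_{\rm pg}$ are straightforward. An isomorphism $U_{X_{1}^{\rm m}} \cong U_{X_{2}^{\rm m}}$ of schemes yields $\pi_{1}^{\rm t}(U_{X_{1}^{\rm m}}) \cong \pi_{1}^{\rm t}(U_{X_{2}^{\rm m}})$, which by the base-change invariance of tame fundamental groups over algebraically closed fields of characteristic $p$ identifies canonically with $\pi_{1}^{\rm t}(U_{X_{1}}) \cong \pi_{1}^{\rm t}(U_{X_{2}})$, producing an element of $\text{Hom}^{\rm op}_{\rm pg}$. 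Once $U_{X_{1}^{\rm m}} \cong U_{X_{2}^{\rm m}}$ is known, both $\pi_{1}^{\rm t}(U_{X_{i}})$ are topologically finitely generated profinite groups of the same isomorphism type, so any continuous surjection between them is an isomorphism by \cite[Proposition 16.10.6]{FJ}, and Lemma \ref{lemsurj} then gives $\text{Hom}^{\rm op}_{\rm pg} = \text{Isom}_{\rm pg}$. The chief difficulty is the coordinate-pinning step in Part (2): one must verify that the linear-condition data transferred by Lemma \ref{lem-8}, when paired with Tamagawa's Hasse-Witt reconstruction across the entire tower of finite tame covers, is rich enough to recover the coordinates themselves in $\overline{\mbF}_{p}$ (and not merely their $\mbF_{p}$-span) along a merely surjective, rather than bijective, $\phi$.
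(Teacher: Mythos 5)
Your proposal assembles the right ingredients --- Lemma \ref{lemsurj}, Theorem \ref{them-2}, Proposition \ref{pro-4}, Lemma \ref{lem-4}, Lemma \ref{lem-8} --- but it leaves the actual coordinate-recovery mechanism unspecified, and you yourself flag this (the ``chief difficulty'' at the end) without resolving it. That is precisely where the content of the proof lives, so the proposal is genuinely incomplete.

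The point you blur is that the marked points $e_{1,2},\dots,e_{1,n-2}\in\overline{\mbF}_{p,1}\setminus\{0,1\}$ will \emph{not} in general satisfy any nontrivial $\mbF_{p}$-linear relations among themselves, so ``transfer every $\mbF_{p}$-linear relation among the normalized coordinates'' has nothing to transfer on the base curve, and ``run the Hasse--Witt calculation through the whole tower'' does not by itself turn this into a determination of the coordinates. The paper instead constructs one specific auxiliary covering: pick $r$ prime to $p$ so that $\mbF_{p}(\zeta_{r})$ already contains an $r$th root of every $e_{1,u}$, and take the degree-$r$ cyclic tame cover $f_{H_{1}}$ given by $y_{1}^{r}=x_{1}$, totally ramified at $\{\infty,0\}$ and \'etale elsewhere. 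On $(X_{H_{1}},D_{X_{H_{1}}})$ the points $\zeta_{r}^{v}$ (for $v=0,\dots,s-1$) are now marked points, and each $e_{1,u}^{1/r}$ is an $\mbF_{p}$-linear combination $\sum_{v}b_{1,uv}\zeta_{r}^{v}$ of them. \emph{These} are the linear conditions to which Lemma \ref{lem-8} applies. Applying Lemma \ref{lem-8} to $\phi|_{H_{1}}:H_{1}\twoheadrightarrow H_{2}$ transfers them to $(X_{H_{2}},D_{X_{H_{2}}})$, and --- crucially --- the transferred relations involve $\xi_{r}^{v}$ where $\xi_{r}:=\theta_{\phi,\widetilde e_{1,0},\widetilde e_{2,0}}(\zeta_{r})$, which is a \emph{known} $r$th root of unity because Proposition \ref{pro-4} gives a field isomorphism $\theta_{\phi}$, not just a group isomorphism of inertia. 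This pins down $e_{2,u}^{1/r}$ exactly, hence $e_{2,u}=(e_{2,u}^{1/r})^{r}$, with no further Hasse--Witt input beyond what is already inside Lemma \ref{lem-8}; in particular one well-chosen covering suffices, not the full tower. Without this construction, the leap from ``the $\mbF_{p}$-linear-condition data is transferred'' to ``the coordinates are determined up to a field automorphism'' is not justified.

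Your sketch of Part (1) has the same soft spot. The paper's test is: does there exist a pair $(r,Q)$ with $Q$ corresponding to a cyclic tame cover of degree $r$ totally ramified over $\{e_{\infty},e_{0}\}$, such that on the cover each $e_{Q,u}$ admits a representation $\sum_{v}b_{uv}\,\zeta_{r}^{v}$ with $b_{uv}\in\mbF_{p}$ (the sets $L_{Q,u}$ being nonempty)? The existence of such a pair is equivalent to $k^{\rm m}\cong\overline{\mbF}_{p}$, and both the candidate pairs and the sets $L_{Q,u}$ are group-theoretic by Theorem \ref{them-2} and Lemma \ref{lem-8}. Your phrase ``compatible with every reconstructed $\mbF_{p}$-linear condition'' does not capture this existence criterion; what is needed is an explicit family of covers whose linear-condition data detects rationality over $\overline{\mbF}_{p}$. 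Your treatment of the converse direction in Part (2) and the equality $\text{Hom}^{\rm op}_{\rm pg}=\text{Isom}_{\rm pg}$ via \cite[Proposition 16.10.6]{FJ} is correct and matches the paper.
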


\begin{proof}
Firstly, let us prove (2). The ``if" part of (2) is trivial. We treat the ``only if" part of (2). Suppose that ${\rm Hom}^{\rm op}_{\rm pg}(\pi_{1}^{\rm t}(U_{X_{1}}), \pi_{1}^{\rm t}(U_{X_{2}}))$ is a non-empty set, and let $\phi \in {\rm Hom}^{\rm op}_{\rm pg}(\pi_{1}^{\rm t}(U_{X_{1}}), \pi_{1}^{\rm t}(U_{X_{2}})).$ Then Lemma \ref{lemsurj} implies that $\phi$ is a surjection.

We identify $D^{\rm gp}_{X_{i}},$ $i\in \{1, 2\}$, with $D_{X_{i}}$ via the bijection $\vartheta_{X_{i}}: D_{X_{i}}^{\rm gp} \xrightarrow{\simeq} D_{X_{i}}$. Since $\phi$ is a surjection, Lemma \ref{lem-4} implies that $\phi$ induces a bijection $\rho_{\phi}: D_{X_{1}} \xrightarrow{\simeq} D_{X_{2}}.$ We put $e_{2, 0}:= \rho_{\phi}(e_{1, 0})$ and $e_{2, \infty}:=\rho_{\phi}(e_{1, \infty})$. Let $\widetilde e_{2, 0} \in D_{\widetilde X_{2}}$ be a point over $e_{2, 0}$. Theorem \ref{them-2} implies that there exists a point $\widetilde e_{1, 0} \in D_{\widetilde X_{1}}$ over $e_{1, 0}$ such that $\phi(I_{\widetilde e_{1, 0}})=I_{\widetilde e_{2, 0}}$. Then $\mbF_{\widetilde e_{i, 0}}:= (I_{\widetilde e_{i, 0}}\otimes_{\mbZ} (\mbQ/\mbZ)_{i}^{p'}) \sqcup \{*_{\widetilde e_{i, 0}}\}, \ i\in \{1, 2\},$ admits a structure of field. Moreover, Proposition \ref{pro-4} implies that the field structure can be mono-anabelian reconstructed from $\pi_{1}^{\rm t}(U_{X_{i}})$, and that $\phi$ induces a field isomorphism $\theta_{\phi, \widetilde e_{1, 0}, \widetilde e_{2, 0}}: \mbF_{\widetilde e_{1, 0}} \xrightarrow{\simeq} \mbF_{\widetilde e_{2, 0}}.$

Proposition \ref{proposition 1} (1) implies that $n$ can be mono-anabelian reconstructed from $\pi_{1}^{\rm t}(U_{X_{i}})$, $i \in \{1, 2\}$. If $n=3$, (ii) is trivial, so we may assume $n\geq 4.$ Moreover, since $k_{1}^{\rm m}\cong \overline \mbF_{p, 1}$, without loss of generality, we may assume $k_{1}=\overline \mbF_{p, 1}=\mbF_{\widetilde e_{1, 0}},$ $X_{1}=\mbP^{1}_{\overline \mbF_{p, 1}}$, and $$D_{X_{1}}= \{e_{1, \infty}=\infty, e_{1, 0}=0, e_{1, 1}=1, e_{1, 2}, \dots, e_{1, n-2}\}.$$ Here, $e_{1, 2}, \dots, e_{1, n-2} \in \overline \mbF_{p, 1}\setminus \{e_{1, 0}, e_{1, 1}\}$ are distinct from each other.

{\bf Step 1}: In this step, we will construct a linear condition on a certain tame covering of  $(X_{1}, D_{X_{1}})$.

We see that there exists a natural number $r$ prime to $p$ such that $\mbF_{p}(\zeta_{r})$ contains $r$th roots of $e_{1, 2}, \dots, e_{1, n-2}$, where $\zeta_{r}$ denotes a fixed primitive $r$th root of unity in $\overline \mbF_{p, 1}$. Let $s:= [\mbF_{p}(\zeta_{r}): \mbF_{p}]$. For each $e_{1, u} \in \{e_{1, 2}, \dots, e_{1, n-2}\}$, we fix an $r$th root $e_{1, u}^{1/r}$ in $\overline \mbF_{p, 1}$. Then we have $$e_{1, u}^{1/r}=\sum_{v=0}^{s-1}b_{1, uv}\zeta_{r}^{v}, \ u \in \{2, \dots, n-2\},$$ where $b_{1, uv} \in \mbF_{p}$ for each $u\in \{2, \dots, n-2\}$ and each $v\in \{0, \dots, s-1\}$.

Let $X_{1} \setminus \{e_{1, \infty}\} =\spec \overline \mbF_{p, 1}[x_{1}]$, $f_{H_{1}}: (X_{H_{1}}, D_{X_{H_{1}}}) \rightarrow(X_{1}, D_{X_{1}})$ the Galois tame covering over $\overline \mbF_{p, 1}$ with Galois group $\mbZ/r\mbZ$ determined by the equation $y_{1}^{r}=x_{1}$, and $H_{1}$ the open normal subgroup of $\pi_{1}^{\rm t}(U_{X_{1}})$ induced by the tame covering $f_{H_{1}}$. Then $f_{H_{1}}$ is totally ramified over $\{e_{1, \infty}=\infty, e_{1, 0}=0\}$ and is \'etale over $D_{X_{1}} \setminus \{\infty, 0\}$. Note that $X_{H_{1}}= \mbP_{\overline \mbF_{p, 1}}^{1}$, and the points of $D_{X_{H_{1}}}$ over $\{e_{1, \infty}, e_{1, 0}\}$ are $\{e_{H_{1}, \infty}:=\infty, e_{H_{1}, 0}:= 0\}$. We put $$e_{H_{1}, u}:= e_{1, u}^{1/r} \in D_{X_{H_{1}}}, \ u \in \{2, \dots, n-2\},\ e_{H_{1}, 1}^{v}:=\zeta^{v}_{r} \in D_{X_{H_{1}}}, \ v \in \{0, \dots, s-1\}.$$ Thus, we obtain a linear condition
$$e_{H_{1}, u}=\sum_{v=0}^{s-1}b_{1, uv}e_{H_{1}, 1}^{v}$$ with respect to $e_{H_{1}, \infty}$ and $e_{H_{1}, 0}$ on $(X_{H_{1}}, D_{X_{H_{1}}})$ for each $\ u \in \{2, \dots, n-2\}.$

{\bf Step 2}: In this step, we will prove that the linear condition on a certain tame covering of $(X_{1}, D_{X_{1}})$ constructed in Step 1 induces a linear condition on a certain tame covering of $(X_{2}, D_{X_{2}})$ via the surjection $\phi$.

Write $H_{2}$ for $\phi(H_{1})$. Since $(r, p)=1$, we have the following commutative diagram of profinite groups:
\[
\begin{CD}
H_{1} @>\phi|_{H_{1}}>> H_{2}
\\
@VVV@VVV
\\
\pi_{1}^{\rm t}(U_{X_{1}})@>\phi>>\pi_{1}^{\rm t}(U_{X_{2}})
\\
@VVV@VVV
\\
\mbZ/r\mbZ@=\mbZ/r\mbZ.
\end{CD}
\]
We denote by $f_{H_{2}}: (X_{H_{2}}, D_{X_{H_{2}}}) \rightarrow (X_{2}, D_{X_{2}})$ the Galois tame covering over $\overline \mbF_{p, 2}$ with Galois group $\mbZ/r\mbZ$ induced by $H_{2}$. Note that Lemma \ref{lem-6} implies that $(X_{H_{1}}, D_{X_{H_{1}}})$ and $(X_{H_{2}}, D_{X_{H_{2}}})$ are equal types. Moreover, Lemma \ref{lem-5} implies that the following commutative diagram can be mono-anabelian reconstructed from the commutative diagram of profinite groups above:
\[
\begin{CD}
D_{X_{H_{1}}} @>\rho_{\phi|_{H_{1}}}>> D_{X_{H_{2}}}
\\
@VVV@VVV
\\
D_{X_{1}}@>\rho_{\phi}>>D_{X_{2}}.
\end{CD}
\]
We put $$e_{2, \infty} := \rho_{\phi}(e_{1, \infty}), \ e_{2, u} := \rho_{\phi}(e_{1, u}), \ u\in \{0, \dots, n-2\},$$ $$e_{H_{2}, \infty} := \rho_{\phi|_{H_{1}}}(e_{H_{1}, \infty}), \ e_{H_{2}, 0} := \rho_{\phi|_{H_{1}}}(e_{H_{1}, 0}), \ e_{H_{2}, u} := \rho_{\phi|_{H_{1}}}(e_{H_{1}, u}), \ u\in \{2, \dots, n-2\},$$ and $$e_{H_{2}, 1}^{v} := \rho_{\phi|_{H_{1}}}(e_{H_{1}, 1}^{v}), \ v\in \{0, \dots, s-1\}.$$

Remark \ref{rem-lem-5-1} implies that $f_{H_{2}}$ is totally ramified over $\{e_{2, \infty}, e_{2, 0}\}$ and is \'etale over $X_{2} \setminus \{e_{2, \infty}, e_{2, 0}\}$. Then we may assume that $X_{2}=\mbP^{1}_{k_{2}}$, and that $e_{2, \infty}=\infty, e_{2, 0}=0, e_{2,1}=1$. We regard $e_{2, u}$, $u \in \{2, \dots, n-2\}$, as an element of $k_{2} \setminus \{e_{2, 0}, e_{2, 1}\}$. Moreover, we have $e_{H_{2}, \infty}=\infty$ and $e_{H_{2}, 0}=0$.

We put $\xi_{r}:=\theta_{\phi, \widetilde e_{1, 0}, \widetilde e_{2, 0}}(\zeta_{r})$ which is an $r$th root of unity in $\mbF_{\widetilde e_{2, 0}}$. Since $\zeta_{r}(e_{H_{1}, 1}^{v})=e_{H_{1}, 1}^{v+1}$, we obtain $\xi_{r}(e_{H_{2}, 1}^{v})=e_{H_{2}, 1}^{v+1}, \ v\in\{0, \dots, s-2\}.$ By applying Lemma \ref{lem-8} for $\phi|_{H_{1}}: H_{1} \twoheadrightarrow H_{2}$, the following linear condition
$$e_{H_{2}, u}=\sum_{v=0}^{s-1}b_{1, uv}\xi_{r}^{v}(e^{0}_{H_{2}, 1})$$ with respect to $e_{H_{2}, \infty}$ and $e_{H_{2}, 0}$ on $(X_{H_{2}}, D_{X_{H_{2}}})$
holds for each $u\in \{2, \dots, n-2\}$. Since $(e_{H_{2}, u})^{r}=e_{2, u}$, $u\in \{2, \dots, n-2\}$, we obtain $$e_{2, u}=(\sum_{v=0}^{s-1}b_{1, uv}\xi_{r}^{v}(e^{0}_{H_{2}, 1}))^{r}.$$ Moreover, if we put $e_{H_{2}, 1}^{0}=1$, then we obtain that $$e_{2, u}=(\sum_{v=0}^{s-1}b_{1, uv}\xi_{r}^{v})^{r}$$ for each $u\in \{2, \dots, n-2\}$. Since $\theta_{\phi, \widetilde e_{1, 0}, \widetilde e_{2, 0}}(\zeta_{r})=\xi_{r}$, we have $$U_{X_{1}}=U_{X_{1}^{\rm m}}=\mbP^{1}_{\overline \mbF_{p, 1}} \setminus \{e_{1, \infty}=\infty, e_{1, 0}=0, e_{1, 1}=1, e_{1, 2}, \dots, e_{1, n-2}\}$$ $$\xrightarrow{\simeq} \mbP^{1}_{\mbF_{\widetilde e_{2, 0}}} \setminus \{e_{2, \infty}=\infty, e_{2, 0}=0, e_{2, 1}=1, \theta_{\phi, \widetilde e_{1, 0}, \widetilde e_{2, 0}}(e_{1, 2}), \dots, \theta_{\phi, \widetilde e_{1, 0}, \widetilde e_{2, 0}}(e_{1, n-2})\}$$ $$\cong\mbP^{1}_{\overline \mbF_{p, 2}} \setminus \{e_{2, \infty}=\infty, e_{2, 0}=0, e_{2, 1}=1, e_{2, 2}, \dots, e_{2, n-2}\}$$ and $$\mbP^{1}_{\overline \mbF_{p, 2}} \setminus \{e_{2, \infty}=\infty, e_{2, 0}=0, e_{2, 1}=1, e_{2, 2}, \dots, e_{2, n-2}\} \times_{\overline \mbF_{p, 2}} k_{2} \cong U_{X_{2}}.$$ This means $U_{X^{\rm m}_{1}} \cong U_{X^{\rm m}_{2}}$ as schemes. In particular, we have $k_{2}^{\rm m} \cong \overline \mbF_{p,2}$.


Finally, we prove that ${\rm Hom}^{\rm op}_{\rm pg}(\pi_{1}^{\rm t}(U_{X_{1}}), \pi_{1}^{\rm t}(U_{X_{2}})) = {\rm Isom}_{\rm pg}(\pi_{1}^{\rm t}(U_{X_{1}}), \pi_{1}^{\rm t}(U_{X_{2}})).$ The ``$\supseteq$" part is trivial. We only need to prove the ``$\subseteq$" part. We may assume ${\rm Hom}^{\rm op}_{\rm pg}(\pi_{1}^{\rm t}(U_{X_{1}}), \pi_{1}^{\rm t}(U_{X_{2}})) \neq \emptyset.$  Let $\phi' \in {\rm Hom}^{\rm op}_{\rm pg}(\pi_{1}^{\rm t}(U_{X_{1}}), \pi_{1}^{\rm t}(U_{X_{2}}))$. Then $\pi_{1}^{\rm t}(U_{X_{1}})$ is isomorphic to $\pi_{1}^{\rm t}(U_{X_{2}})$ as abstract profinite groups. By Lemma \ref{lemsurj}, $\phi'$ is a surjection. Then \cite[Proposition 16.10.6]{FJ} implies that $\phi'$ is an isomorphism. Thus, we obtain $\phi' \in {\rm Isom}_{\rm pro\text{-}gps}(\pi_{1}^{\rm t}(U_{X_{1}}), \pi_{1}^{\rm t}(U_{X_{2}})).$ This completes the proof of (2).

Next, let us prove (1). Without loss of generality, we only treat the case where $i=1$. Moreover, let $(X, D_{X}) := (X_{1}, D_{X_{1}})$, $$D_{X}=\{e_{\infty}=\infty, e_{0}=0, e_{1}=1, e_{2}, \dots, e_{n-2}\},$$ $k:= k_{1}$, and $\overline \mbF_{p} :=\overline \mbF_{\widetilde e_{0} }$. Let $(r, Q)$ be a pair such that the following two conditions hold:
\begin{itemize}
  \item  $(r, p)=1$;
  \item $Q$ is an open normal subgroup of $\pi_{1}^{\rm t}(U_{X})$ such that $\pi_{1}^{\rm t}(U_{X})/Q \cong \mbZ/r\mbZ$, and that the Galois tame covering $f_{Q}: (X_{Q}, D_{X_{Q}}) \rightarrow(X, D_{X})$ over $k$ induced by $Q$ is totally ramified over $\{e_{\infty}, e_{0}\}$ and is \'etale over $D_{X} \setminus \{e_{\infty}, e_{0}\}$.
\end{itemize}

By applying Theorem \ref{them-2}, we see immediately that the set of pairs defined above can be mono-anabelian reconstructed from $\pi_{1}^{\rm t}(U_{X})$.

We fix a primitive $r$-th root of unity $\zeta_{r}$ in $\overline \mbF_{p}$ and put $s_{r} := [\mbF_{p}(\zeta_{r}): \mbF_{p}]$. Moreover, we put $$e_{Q, \infty}:=\infty, \ e_{Q, 0} := 0, \ e_{Q, 1}^{v}:= \zeta_{r}^{v} \in D_{X_{Q}}, \ v\in \{0, \dots s_{r}-1\},$$ and let $e_{Q, u} \in D_{X_{Q}}$, $u \in \{2, \dots, n\}$, such that $f_{Q}(e_{Q, u})=e_{u}$. Denote by $$L_{Q, u}:= \{e_{Q, u}-\sum_{v=0}^{s_{r}-1}b_{uv}e_{Q, 1}^{v}  \ | \ b_{uv}\in \mbF_{p}\} \cap \{0\}, \ u \in \{2, \dots, n-2\}.$$ By applying arguments similar to the arguments given in the proof of (2) above, we have that $d_{(X, D_{X})}=0$ if and only if there exists a pair $(r, Q)$ defined above such that $L_{Q, u} \neq \emptyset$ for each $u \in \{2, \dots, n-2\}$. Then the third and the final paragraphs of the proof of \cite[Lemma 3.3]{T2} implies that $L_{Q, u}$, $u \in \{2, \dots, n-2\}$, can be mono-anabelian reconstructed from $Q$. Thus, $d_{(X, D_{X})}$ can be mono-anabelian reconstructed from $\pi_{1}^{\rm t}(U_{X})$. This completes the proof of the theorem.
\end{proof}

\begin{remark}
Note that Theorem \ref{them-3} also holds if we replace $\pi_{1}^{\rm t}(U_{X_{i}})$, $i \in \{1, 2\}$, by its maximal pro-solvable quotient $\pi_{1}^{\rm t}(U_{X_{i}})^{\rm sol}$. Then we obtain the following solvable version of Theorem \ref{them-3} which is slightly stronger than the original theorem:
\begin{quote}
{\it We maintain the notation introduced above. Then $d_{(X_{i}, D_{X_{i}})}$, $i\in \{1, 2\}$, can be mono-anabelian reconstructed from $\pi_{1}^{\rm t}(U_{X_{i}})^{\rm sol}$. Moreover, suppose that $k_{1}^{\rm m} \cong \overline \mbF_{p, 1}$. Then the set of open continuous homomorphisms $${\rm Hom}^{\rm op}_{\rm pg}(\pi_{1}^{\rm t}(U_{X_{1}})^{\rm sol}, \pi_{1}^{\rm t}(U_{X_{2}})^{\rm sol})$$ is non-empty if and only if $U_{X^{\rm m}_1} \cong U_{X^{\rm m}_2}$ as schemes. In particular, if this is the case, we have $k_{2}^{\rm m} \cong \overline \mbF_{p, 2}$ and $${\rm Hom}^{\rm op}_{\rm pg}(\pi_{1}^{\rm t}(U_{X_{1}})^{\rm sol}, \pi_{1}^{\rm t}(U_{X_{2}})^{\rm sol}) = {\rm Isom}_{\rm pg}(\pi_{1}^{\rm t}(U_{X_{1}})^{\rm sol}, \pi_{1}^{\rm t}(U_{X_{2}})^{\rm sol}).$$}
\end{quote}
\end{remark}

\subsection{The second main theorem}\label{sec-6}

In this subsection, by using Theorem \ref{them-3}, we prove a result concerning pointed collection conjecture and the weak Hom-version conjecture (i.e. Theorem \ref{them-4}).
 We maintain the notation introduced in \ref{moduli212}.

\subsubsection{}\label{def-4}
Let $q \in M_{0, n}^{\rm ord}$ be an arbitrary point, $\overline {k(q)}$ an algebraic closure of $k(q)$, and $$U_{X_{q}} \simeq \mbP^{1}_{\overline {k(q)}} \setminus \{a_{1}=1, a_{2}=0, a_{3}=\infty, a_{4}, \dots, a_{n}\}$$ as $\overline {k(q)}$-schemes. We shall say that $q$ is a {\it coordinated point} if either $q=q_{\rm gen}$ or the following three conditions are satisfied:
\begin{itemize}
  \item $\text{dim}(V_{q})=\text{dim}(M_{0, n}^{\rm ord})-1$;
  \item there exists $i \in \{4, \dots, n\}$ such that $a_{i} \in \overline \mbF_{p}$;
  \item let $\omega^{i}_{n, n-1}: M_{0, n}^{\rm ord} \rightarrow M_{0, n-1}^{\rm ord}$ be the morphism induced by the morphism $\mcM_{0, n}^{\rm ord} \rightarrow \mcM_{0, n-1}^{\rm ord}$ obtained by forgetting the $i$th marked point; then $\omega^{i}_{n, n-1}(q)$ is the generic point of $M_{0, n-1}^{\rm ord}$.
\end{itemize}

Let $t$ be a closed point of $M_{0, n}^{\rm ord}$. Then there exists a set of coordinated points $P_{t}:= \{q_{t, 4}, \dots, q_{t, n}\}$ such that $$\{t\}=\bigcap_{q_{t, j} \in P_{t}}V_{q_{t, j}}.$$ 

\subsubsection{} Now, we prove the second main theorem of the present paper.

\begin{theorem}\ \label{them-4}
\begin{enumerate}
  \item For each closed point $t \in M_{0, n}^{\rm ord, cl}$, the set $\mcC_{t}$ associated to $t$  is a pointed collection (Definition \ref{def-3}). Moreover, for each pointed collection $\mcC \in \msC_{q_{\rm gen}}$, there exists a closed point $s \in M_{0, n}^{\rm ord, cl}$ such that $\mcC=\mcC_{s}$.
  \item Let $q \in M_{0, n}^{\rm ord}$ be an arbitrary point. Then the the natural map ${\rm colle}_{q}: \msV_{q}^{\rm cl} \rightarrow \msC_{q}, \ [t] \mapsto \mcC_{t},$ is an injection.
  \item Let $q \in M_{0, n}^{\rm ord}$ be an arbitrary point. Suppose that there exists a set of coordinated points $P_{q}$ such that $$V_{q} = \bigcap_{u \in P_{q}} V_{u}.$$ Then the pointed collection conjecture holds for $q$. In particular, the pointed collection conjecture holds for each closed point of $M_{0, n}^{\rm ord}$.
      \item Let $q_{i} \in M_{0, n}^{\rm ord}$, $i\in \{1, 2\}$, be an arbitrary point. Suppose that there exists a set of coordinated points $P_{q_{1}}$ such that $$V_{q_1} = \bigcap_{u \in P_{q_{1}}} V_{u}.$$ Then the weak Hom-version conjecture holds. In particular, the weak Hom-version conjecture holds when $q_{1}$ is a closed point.
\end{enumerate}

\end{theorem}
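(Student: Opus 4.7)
The plan is to bootstrap Theorem \ref{them-3} and the mono-anabelian machinery of Section \ref{mpanabelian} to establish (i)--(iv) in sequence, with (iii) as the technical heart and (iv) a formal consequence of (iii).

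First, I would establish (i) by identifying $\mcC_t$ with $\pi_A^{\rm t}(t)$ for a closed $t$ (immediate from \ref{moduli212} and \ref{def-3}) and proving that $\bigcap_{G \in \mcC_t} U_G \cap M_{0, n}^{\rm ord, cl}$ equals the Frobenius equivalence class $[t]_{fe}$. One inclusion is trivial: a Frobenius twist $t^{(m)}$ gives $U_{X_{t^{(m)}}} \cong U_{X_t}$ as $\overline{\mbF}_p$-schemes, hence $\pi_A^{\rm t}(t^{(m)}) = \pi_A^{\rm t}(t)$. For the reverse, if $s$ is closed with $\pi_A^{\rm t}(t) \subseteq \pi_A^{\rm t}(s)$, I would use a Mittag-Leffler argument over a cofinal system of open normal subgroups of $\pi_1^{\rm t}(t)$---whose finite quotients all lie in $\pi_A^{\rm t}(s)$---to produce an open continuous homomorphism $\pi_1^{\rm t}(s) \to \pi_1^{\rm t}(t)$, and then invoke Theorem \ref{them-3} (2) (valid since $k_s^{\rm m} \cong \overline{\mbF}_{p,s}$) to conclude $U_{X_s^{\rm m}} \cong U_{X_t^{\rm m}}$, i.e.\ $s \sim_{fe} t$. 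Finiteness of $[t]_{fe}$ holds because closed points of $M_{0,n}^{\rm ord}$ correspond to curves whose marked-point coordinates lie in some finite extension of $\mbF_p$, on which Frobenius has finite orbits. Condition (ii) of pointed collection then follows from $\pi_A^{\rm t}(s) = \mcC_t$ for $s \sim_{fe} t$. The converse half of (i) is formal: given a pointed collection $\mcC$, pick any $s$ in the non-empty set $\bigcap_{G \in \mcC} U_G \cap M^{\rm ord, cl}$; then $\mcC \subseteq \mcC_s$, and the second defining condition of $\mcC$ forces $\mcC_s \subseteq \mcC$.

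Part (ii) is immediate from (i): if $\mcC_{t_1} = \mcC_{t_2}$ then $\pi_A^{\rm t}(t_1) = \pi_A^{\rm t}(t_2)$, so $\pi_1^{\rm t}(t_1) \cong \pi_1^{\rm t}(t_2)$ by \cite[Proposition 16.10.7]{FJ}, so $t_1 \sim_{fe} t_2$ by Theorem \ref{them-3} (2). For (iii), given $q$ with $V_q = \bigcap_{u \in P_q} V_u$ and $\mcC \in \msC_q$, part (i) supplies a closed $s$ with $\mcC_s = \mcC$; I would then show $s \in V_u$ for every coordinated $u \in P_q$, whence $s \in V_q$. Since $q \in V_u$, the openness of each $U_G$ (Proposition \ref{pro-6}) gives $\pi_A^{\rm t}(q) \subseteq \pi_A^{\rm t}(u)$, hence $\pi_A^{\rm t}(s) = \mcC \subseteq \pi_A^{\rm t}(u)$. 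Applying the Mittag-Leffler argument of (i) to the topologically finitely generated group $\pi_1^{\rm t}(s)$ produces an open continuous surjection $\pi_1^{\rm t}(u) \twoheadrightarrow \pi_1^{\rm t}(s)$. Since $u$ is coordinated---exactly one marked-point coordinate $a_i$ is algebraic while the remaining ones are transcendentally independent---the compatibility of Tamagawa's coordinate calculation with open continuous surjections (Lemma \ref{lem-8}), together with the field reconstruction of Proposition \ref{pro-4}, forces the $i$-th coordinate of $X_s$ to agree, up to Frobenius, with $a_i$, hence $s \in V_u$. Intersecting over $P_q$ yields the pointed collection conjecture for $q$.

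For (iv), the ``if'' direction follows from Proposition \ref{pro-5}: $V_{q_1} \supseteq_{fe} V_{q_2}$ translates to $V_{\omega(q_1)} \supseteq V_{\omega(q_2)}$ in $M_{0,n,\mbF_p}$, and specialization along a DVR realizing this containment (after base change to an algebraically closed field) yields an open continuous surjection $\pi_1^{\rm t}(q_1) \twoheadrightarrow \pi_1^{\rm t}(q_2)$. For ``only if'', any $\phi \in {\rm Hom}^{\rm op}_{\rm pg}(\pi_1^{\rm t}(q_1), \pi_1^{\rm t}(q_2))$ is surjective by Lemma \ref{lemsurj}, so $\pi_A^{\rm t}(q_2) \subseteq \pi_A^{\rm t}(q_1)$; for each closed $c_2 \in V_{q_2}^{\rm cl}$ we then have $\mcC_{c_2} \subseteq \pi_A^{\rm t}(q_1)$, i.e.\ $\mcC_{c_2} \in \msC_{q_1}$, so (iii) yields a closed $c_1 \in V_{q_1}^{\rm cl}$ with $\mcC_{c_1} = \mcC_{c_2}$, and (ii) gives $c_1 \sim_{fe} c_2$. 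The in-particular case $q_1$ closed is covered since every closed point of $M_{0,n}^{\rm ord}$ admits the coordinated decomposition of \ref{def-4}. The main obstacle is the Mittag-Leffler step used both in (i) and (iii): upgrading a set-theoretic inclusion $\pi_A^{\rm t}(A) \subseteq \pi_A^{\rm t}(B)$ to an open continuous surjection $\pi_1^{\rm t}(B) \twoheadrightarrow \pi_1^{\rm t}(A)$ requires checking that the transition maps of surjection-sets along a cofinal tower are compatible enough for a non-empty inverse limit, and this will need the common rank dictated by the type $(0,n)$; the secondary subtlety, in (iii), is extracting the geometric conclusion $s \in V_u$ from such a surjection for a coordinated $u$, which I expect to resolve by replaying the proof of Theorem \ref{them-3} (2) with the coordinated structure pinning exactly one coordinate and propagating the algebraic identity via Proposition \ref{pro-4} and Lemma \ref{lem-8}.
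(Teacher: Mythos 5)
Your outline for parts (1), (2), and (4) matches the paper's proof quite closely: the inverse-limit (``Mittag--Leffler'') argument over the cofinal system of finite quotients produces the surjection from $\pi_A^{\rm t}(t)\subseteq\pi_A^{\rm t}(s)$, Theorem \ref{them-3}(2) then gives $s\sim_{fe}t$, and the contrapositive via Lemma \ref{lemsurj} handles the ``only if'' direction of (4). Part (2) is exactly the paper's one-liner unpacked. The difference is entirely in how you handle the coordinated case inside (3), and there you miss the paper's key device.

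You propose, after producing the surjection $\pi_1^{\rm t}(u)\twoheadrightarrow\pi_1^{\rm t}(s)$ for a coordinated $u$, to ``replay the proof of Theorem \ref{them-3}(2) with the coordinated structure pinning exactly one coordinate,'' via Lemma \ref{lem-8} and Proposition \ref{pro-4}. But Theorem \ref{them-3}(2) is proved under the hypothesis $k_1^{\rm m}\cong\overline{\mbF}_{p,1}$, i.e.\ \emph{all} marked coordinates are algebraic: the whole argument there chooses $r$ so that $\mbF_p(\zeta_r)$ contains $r$th roots of \emph{every} coordinate, and reads off all of them at once. For a coordinated $u$ all but one coordinate are transcendental, so you cannot invoke Theorem \ref{them-3}(2) as a black box; you would have to rebuild a one-variable variant of the linear-condition machinery from scratch, and you do not do so. The paper sidesteps this entirely: it applies Theorem \ref{them-2} to descend the surjection $\pi_1^{\rm t}(q)\twoheadrightarrow\pi_1^{\rm t}(t_1)$ along the forgetting morphism $\omega^{\setminus n}_{n,4}\colon M_{0,n}^{\rm ord}\to M_{0,4}^{\rm ord}$ (killing the inertia subgroups of the forgotten marked points, which is exactly what Theorem \ref{them-2}'s compatibility with open surjections makes possible), producing a surjection $\pi_1^{\rm t}(q'')\twoheadrightarrow\pi_1^{\rm t}(t_1'')$ between $(0,4)$-curves where $q''$ \emph{is} a closed point of $M_{0,4}^{\rm ord}$. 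Then Theorem \ref{them-3}(2) applies verbatim to give $q''\sim_{fe}t_1''$, pinning the one algebraic coordinate. A second forgetting map $\omega^n_{n,n-1}$ then places $t_1$ inside $V_q$ using that $q$ maps to the generic point of $M_{0,n-1}^{\rm ord}$. This two-step forgetting-map reduction is the actual content of the coordinated case and is absent from your outline; without it, the step ``forces the $i$-th coordinate of $X_s$ to agree, up to Frobenius, with $a_i$'' is an assertion rather than a proof.
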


\begin{proof}
Let us prove (1). We put $F_{t}:= \{t' \in M_{0,n}^{\rm ord, cl} \ |\ t \sim_{fe} t'\}$. Let $t''$ be an arbitrary point of $\bigcap_{G \in \pi^{\rm t}_{A}(t)}U_{G}$. Then, for each $G \in \pi^{\rm t}_{A}(t)$, $\text{Hom}_{\rm pg}^{\rm surj}(\pi_{1}^{\rm t}(t''),  G)$ is non-empty, where $\text{Hom}_{\rm pg}^{\rm surj}(-, -)$ denotes the subset of $\text{Hom}^{\rm open}_{\rm pg}(-, -)$ whose elements are surjections. Since $\pi_{1}^{\rm t}(t'')$ is topologically finitely generated, we obtain that the set $\text{Hom}_{\rm pg}^{\rm surj}(\pi_{1}^{\rm t}(t''),  G)$ is finite. Then the set of open continuous homomorphisms $$\varprojlim_{G \in \pi^{\rm t}_{A}(t)}\text{Hom}_{\rm pg}^{\rm surj}(\pi_{1}^{\rm t}(t''),  G)=\text{Hom}^{\rm surj}_{\rm pg}(\pi_{1}^{\rm t}(t''), \pi_{1}^{\rm t}(t))$$ is non-empty. Thus, Theorem \ref{them-3} implies $t'' \in F_{t}$. This means  $$(\bigcap_{G \in \pi^{\rm t}_{A}(t)}U_{G}) \cap M_{g, n}^{\rm ord, cl}=F_{t}.$$ Since $U_{X_{t}}$ can be defined over a finite field, $F_{t}$ is a finite set. Then $\mcC_{t}$ is a pointed collection.

Let $\mcC \in \msC_{q_{\rm gen}}$ be a pointed collection and $s$ a closed point of $\bigcap_{G \in \mcC}U_{G}$. By replacing $t$ by $s$, and by applying arguments similar to the arguments given in the proof above, we obtain $\mcC=\mcC_{s}$.

(2) follows immediately from Theorem \ref{them-3}. Let us prove (3). If $n=4$, then $M_{0, 4}^{\rm ord}$ is a one dimensional scheme. For each $q\in M_{0,4}^{\rm ord}$, the pointed collection conjecture follows immediately from Theorem \ref{them-3}. Then we may assume $n\geq 5.$ To verify (iii), (ii) implies that we only need to prove that $\text{colle}_{q}$ is a surjection. Suppose that $q$ is a closed point of $M_{0, n}^{\rm ord}$, then (iii) follows immediately from Theorem \ref{them-3}.

Suppose that $q$ is a non-closed point. This means $\text{dim}(V_{q})\geq1.$ If $q=q_{\rm gen}$, (3) follows from (1) and (2). Let us treat the case where $q\neq q_{\rm gen}$. First, suppose that $q$ is a coordinated point, and that $$U_{X_{q}}\simeq\mbP^{1}_{\overline {k(q)}} \setminus \{1, 0, \infty, a_{4}, \dots, a_{n}\}.$$ Without loss of generality, we may assume  $a_{n} \in \overline \mbF_{p}$.

For each pointed collection $\mcC \subseteq \msC_{q}$, by applying (1), there exists a closed point $t_{1} \in M_{g, n}^{\rm ord, cl}$ such that $\mcC_{t_{1}}=\mcC$.  Then we have an open continuous surjective homomorphism $\pi_{1}^{\rm t}(q) \twoheadrightarrow \pi_{1}^{\rm t}(t_{1}).$ Let $\omega^{\setminus n}_{n, 4}: M_{0, n}^{\rm ord} \rightarrow M_{0, 4}^{\rm ord}$ be the morphism induced by the morphism $\mcM_{0, n}^{\rm ord} \rightarrow \mcM_{0, 4}^{\rm ord}$ obtained by forgetting the marked points except the first, the second, the third, and the $n$th marked points. We put $t_{1}'':= \omega^{\setminus n}_{n, 4}(t_{1})$ and $q'':= \omega^{\setminus n}_{n, 4}(q)$. Note that $t_{1}''$ and $q''$ are closed points of $M_{0, 4}$. Then Theorem \ref{them-2} implies that the surjection $\pi_{1}^{\rm t}(q) \twoheadrightarrow \pi_{1}^{\rm t}(t_{1})$ induces an open continuous surjective homomorphism $\pi_{1}^{\rm t}(q'') \twoheadrightarrow\pi_{1}^{\rm t}(t_{1}'').$ Thus, by Theorem \ref{them-3}, we obtain that $q'' \sim_{fe} t_{1}''.$ Then without loss of generality, we may assume  $$U_{X_{t_{1}}}\simeq \mbP^{1}_{\overline \mbF_{p}} \setminus \{1, 0, \infty, b_{4}, \dots, b_{n-1}, a_{n}\}$$ over $\overline \mbF_{p}$, where $b_{i} \in \overline \mbF_{p}$ for each $i \in \{4, \dots, n-1\}$.

On the other hand, let $\omega^{n}_{n, n-1}: M_{0, n}^{\rm ord} \rightarrow M_{0, n-1}^{\rm ord}$ be the morphism induced by the morphism $\mcM_{0, n}^{\rm ord} \rightarrow \mcM_{0, n-1}^{\rm ord}$ obtained by forgetting the $n$-th marked point. We put $t_{1}':=\omega^{n}_{n, n-1}(t_{1})$ and $q':= \omega^{n}_{n, n-1}(q)$, respectively. Since $q$ is a coordinated point, $q'$ is the generic point of $M_{0, n-1}^{\rm ord}$. Then we obtain $t_{1}' \in V_{q'}^{\rm cl}$. Moreover, we see $V_{q}=\omega_{n, n-1}^{-1}(q').$ Thus, $t_{1}=\omega^{-1}_{n, n-1}(t'_{1})$ is a closed point of $V_{q}$. Then the pointed collection conjecture holds for $q$ when $q$ is a coordinated point.

Next, we prove the general case. If $V_{q} = \bigcap_{u \in P_{q}} V_{u}$, then $V_{q}^{\rm cl}=\bigcap_{u \in P_{q}} V_{u}^{\rm cl}$ and $\bigcap_{u \in P_{q}} \msC_{u}=\msC_{q}$. Moreover, since we have a bijection $\text{colle}_{u}:\msV_{u}^{\rm cl} \xrightarrow{\simeq}\msC_{u}$ for each $u \in P_{q},$ we have that $$\text{colle}_{q}: \msV^{\rm cl}_{q}= \bigcap_{u \in P_{q}} \msV^{\rm cl}_{u} \rightarrow \bigcap_{u \in P_{q}} \msC_{u}=\msC_{q}$$ is a bijection. This completes the proof of (3).

Let us prove (4). We only need to prove the ``only if" part of the weak Hom-version conjecture. Suppose that $V_{q_2}$ is not essentially contained in $V_{q_{1}}$. This implies that there exists a closed point $t_{2} \in V_{q_{2}}^{\rm cl}$ such that $F_{t_{2}} \cap V_{q_{1}}=\emptyset$, where $F_{t_{2}}:= \{t_{2}' \in M_{0,n}^{\rm ord, cl} \ |\ t_{2} \sim_{fe} t_{2}'\}$. By (3), we have $\mcC_{t_{2}} \not\in \msC_{q_{1}}$. Thus, by Lemma \ref{lemsurj}, we obtain that $$\text{Hom}^{\rm op}_{\rm pg}(\pi_{1}^{\rm t}(q_{1}), \pi_{1}^{\rm t}(t_{2}))=\emptyset.$$ This provides a contradiction to the assumption that $\text{Hom}^{\rm op}_{\text{pg}}(\pi_{1}^{\rm t}(q_{1}), \pi_{1}^{\rm t}(q_{2}))$ is non-empty. This completes the proof of (4).
\end{proof}

\begin{remark}
Let $q \in M_{g, n}$ be an arbitrary point. Stevenson posed a question as follows (see \cite[Question 4.3]{Ste} for the case of $n=0$): Does $\bigcap_{G \in \pi_{A}^{\rm t}(q)} U_{G}$ contain any closed points of $M_{g, n}$? By \cite[Theorem 0.3]{T5}, $\bigcap_{G \in \pi_{A}^{\rm t}(q)} U_{G}$ contains a closed point of $M_{g, n}$ if and only if $q$ is a closed point of $M_{g, n}$. Furthermore, when $g=0$ and $q$ is a closed point, the proof of Theorem \ref{them-4} (1) implies that $$(\bigcap_{G \in \pi_{A}^{\rm t}(q)} U_{G})\cap M_{0, n}^{\rm cl}=F_{q},$$ where $F_{q}:=\{q' \in M_{0,n}^{\rm cl} \ |\ q \sim_{fe} q'\}$.

\end{remark}

\markright{ }


\begin{thebibliography}{ABCD999}


\bibitem[FJ]{FJ} M. D. Fried, M. Jarden, Field arithmetic. Third edition. {\sl Ergebnisse der Mathematik und ihrer Grenzgebiete.} 3. Folge. A Series of Modern Surveys in Mathematics {\bf11}. {\sl Springer-Verlag}, Berlin, 2008.


\bibitem[G]{G} A. Grothendieck, Letter to G. Faltings (translation into English). {\sl Geometric Galois actions. 1. Around Grothendieck's ``Esquisse d'un programme''}. Edited by Leila Schneps and Pierre Lochak. {\it London Mathematical Society Lecture Note Series}, {\bf242}. {\sl Cambridge University Press, Cambridge}, 1997. iv+293 pp.




\bibitem[K]{K} F. Knudsen, The projectivity of the moduli space of stable curves, II: The stacks $M_{g,n}$, {\it Math. Scand.,} {\bf52} (1983), 161¨C-199.


\bibitem[M]{M} S. Mochizuki, The local pro-$p$ anabelian geometry of curves. {\it Invent. Math.} {\bf138} (1999), 319--423.




\bibitem[N]{N} H. Nakamura, Galois rigidity of the \'etale fundamental groups of punctured projective lines. {\it J. Reine Angew. Math.} {\bf411} (1990), 205-216.



\bibitem[PS]{PS} F. Pop, M. Sa\"{i}di, On the specialization homomorphism of fundamental groups of curves in positive characteristic. {\sl Galois groups and fundamental groups,} 107--118, {\it Math. Sci. Res. Inst. Publ.,} {\bf41}, {\sl Cambridge Univ. Press, Cambridge,} 2003.


\bibitem[R1]{R1} M. Raynaud, Sections des fibr\'es vectoriels sur une courbe. {\it Bull. Soc. math. France} {\bf110} (1982), 103--125.

\bibitem[R2]{R2} M. Raynaud, Sur le groupe fondamental d'une courbe compl\`ete en caract\'eristique $p>0$. {\sl Arithmetic fundamental groups and noncommutative algebra (Berkeley, CA, 1999)}, 335-351, {\it Proc. Sympos. Pure Math.,} {\bf 70}, {\sl Amer. Math. Soc., Providence, RI,} 2002.


\bibitem[Ste]{Ste} K. Stevenson, Galois groups of unramified covers of projective curves in characteristic $p$. {\it Journal of Algebra} {\bf 182} (1996), 770--804.



\bibitem[T1]{T1} A. Tamagawa, The Grothendieck conjecture for affine curves. {\it Compositio Math.} {\bf109} (1997), 135--194.

\bibitem[T2]{T2} A. Tamagawa, On the fundamental groups of curves over algebraically closed fields of characteristic $>0$. {\it Internat. Math. Res. Notices} (1999), 853-873.

\bibitem[T3]{T3} A. Tamagawa, Fundamental groups and geometry of curves in positive characteristic. {\sl Arithmetic fundamental groups and noncommutative algebra (Berkeley, CA, 1999)}, 297--333, {\it Proc. Sympos. Pure Math.}, {\bf 70}, {\sl Amer. Math. Soc., Providence, RI,} 2002.

\bibitem[T4]{T4} A. Tamagawa, On the tame fundamental groups of curves over algebraically closed fields of characteristic $>0$. {\sl Galois groups and fundamental groups,} 47--105, {\it Math. Sci. Res. Inst. Publ.,} {\bf41}, {\sl Cambridge Univ. Press, Cambridge,} 2003.

\bibitem[T5]{T5} A. Tamagawa, Finiteness of isomorphism classes of curves in positive characteristic with prescribed fundamental groups. {\it J. Algebraic Geom.} {\bf13} (2004), 675--724.



\bibitem[Y1]{Y1} Y. Yang, On the admissible fundamental groups of curves over algebraically closed fields of characteristic $p>0$, {\it Publ. Res. Inst. Math. Sci.} {\bf 54} (2018), 649--678.



\bibitem[Y2]{Y2} Y. Yang, On topological and combinatorial structures of pointed stable curves over algebraically closed fields of positive characteristic, to appear in {\it Math. Nachr.}


\bibitem[Y3]{Y3} Y. Yang, On the averages of generalized Hasse-Witt invariants of pointed stable curves in positive characteristic, {\it Math. Z.} {\bf295} (2020), 1--45.

\bibitem[Y4]{Y4} Y. Yang, On the existence of specialization isomorphisms of admissible fundamental groups in positive characteristic, {\it Math. Res. Lett.} {\bf28} (2021), 1941--1959.

\bibitem[Y5]{Y5} Y. Yang, Maximum generalized Hasse-Witt invariants and their applications to anabelian geometry, {\it Selecta Math. (N.S.)} {\bf28} (2022), Paper No. 5, 98 pp.

\bibitem[Y6]{Y6} Y. Yang, Moduli spaces of fundamental groups of curves in positive characteristic I, preprint. See \url{http://www.kurims.kyoto-u.ac.jp/~yuyang/}

\bibitem[Y7]{Y7} Y. Yang, Moduli spaces of fundamental groups of curves in positive characteristic II, in preparation.

\bibitem[Y8]{Y8} Y. Yang, Topological and group-theoretical specializations of fundamental groups of curves in positive characteristic, preprint. See \url{http://www.kurims.kyoto-u.ac.jp/~yuyang/}

\bibitem[Y9]{Y9} Y. Yang, On the explicit constructions of differences of tame fundamental groups of
non-isomorphic curves in positive characteristic, in preparation.

\end{thebibliography}
\end{document}